\documentclass{amsart}

\usepackage{hyperref, cleveref}
\usepackage{graphicx}
\usepackage{mathtools, amssymb}
\usepackage{accents}
\usepackage{makecell}
\usepackage{multicol}
\usepackage{blkarray}
\usepackage{nicematrix}
\usepackage{bm}
\usepackage{paralist}
\usepackage[T1]{fontenc}
\usepackage{subcaption}
\usepackage[margin=1in]{geometry}
\usepackage[numbers,sort]{natbib}
\usepackage{thmtools}
\usepackage{xcolor}
\usepackage{tikz}
\usepackage{tikz-cd}

\usetikzlibrary{decorations.pathreplacing}
\usetikzlibrary{patterns,external}
\usetikzlibrary{calc}
\usetikzlibrary{shapes.multipart}
\usetikzlibrary{positioning}

\definecolor{fondo}{rgb}{0.898,0.996,0.898}
\pgfkeys{/tikz/.cd, K/.store in=\K, K=1}

\usepackage{enumitem}
\setlist[enumerate]{leftmargin=.5in}
\setlist[itemize]{leftmargin=.5in}

\usepackage{algorithm}
\usepackage{algpseudocode}
\algrenewcommand\algorithmicrequire{\textbf{Inputs:}}
\algrenewcommand\algorithmicensure{\textbf{Output:}}
\newcommand{\Bullet}[1]{\Statex \hspace{1.5em} $\bullet$ #1}
\usepackage{verbatim}

\newcommand{\Abullet}{{\mathcal{A}_{\bullet}}}
\newcommand{\actson}{\curvearrowright}
\newcommand{\C}{{\mathbb{C}}}
\newcommand{\MV}{\text{MV}}
\newcommand{\mydef}[1]{{\color{blue}#1}}
\newcommand{\mydefit}[1]{{\color{blue}\emph{#1}}}
\newcommand{\N}{{\mathbb{N}}}

\newcommand{\V}{{\mathcal{V}}}
\newcommand{\Z}{{\mathbb{Z}}}

\makeatletter
\DeclareRobustCommand{\sqcdot}{\mathbin{\mathpalette\morphic@sqcdot\relax}}
\newcommand{\morphic@sqcdot}[2]{%
  \sbox\z@{$\m@th#1\centerdot$}%
  \ht\z@=.33333\ht\z@
  \vcenter{\box\z@}%
}
\makeatother

\makeatletter
\def\fooC#1{%
\expandafter\newcommand\csname c#1\endcsname{\mathcal{#1}}}
\def\fooB#1{%
\expandafter\newcommand\csname b#1\endcsname{\mathbb{#1}}}
\count@=0
\loop
\advance\count@ 1
\edef\y{\@Alph\count@}%
\expandafter\fooC\y
\expandafter\fooB\y
\ifnum\count@<26
\repeat
\makeatother

\newtheorem{theorem}{Theorem}[section]
\newtheorem{definition}[theorem]{Definition}
\newtheorem{lemma}[theorem]{Lemma} 
\newtheorem{corollary}[theorem]{Corollary} 
 
\newtheorem{proposition}[theorem]{Proposition}

\theoremstyle{definition}
\newtheorem{example}[theorem]{Example}
\newtheorem{remark}[theorem]{Remark}

\title{Restricted monodromy containing transpositions}

\author[J. Barnhart]{Julianne Barnhart}
    \address[J. Barnhart, prev. at Clemson University]{Department of Mathematics and Computer Science, Whitworth University, Spokane, WA 99251}
    \email{jbarnhart@whitworth.edu}
\author[T. Brysiewicz]{Taylor Brysiewicz}
    \address[T. Brysiewicz]{Department of Mathematics, Western University, London, ON N6A 3K7, Canada}
    \thanks{Brysiewicz was supported by NSERC discovery grant RGPIN-2023-03551.}
    \email{tbrysiew@uwo.ca}
\author[M.A. Burr]{Michael Burr}
    \address[M.A. Burr]{School of Mathematical and Statistical Sciences, Clemson University, Clemson, SC, 29634}
    \thanks{Burr was supported by SIMONS travel support for mathematicians \#964285.}
    \email{burr2@clemson.edu}
\author[T. Yahl]{Thomas Yahl}
    \address[T. Yahl]{Department of Mathematics, University of Wisconsin, Madison, WI 53706}
    \email{tyahl@wisc.edu}

\begin{document}

\begin{abstract}
We relate the monodromy group of a branched cover to the monodromy groups of its restrictions. We show that the presence of a transposition in a restricted monodromy group significantly constrains the monodromy group of the original branched cover. Using this result, we complete the sparse trace test algorithm, which numerically verifies whether a set of solutions to a sparse polynomial system is complete. Along the way, we develop a generalization of the notion of a trace test in the setting of branched covers.
\end{abstract}

\subjclass[2010]{65H14, 14Q65, 14N10}
\keywords{Monodromy, trace test, branched covers, factorization of covers, imprimitive groups, wreath products, local-to-global properties}

\maketitle

\section{Introduction}
The goal of this paper is to relate the monodromy groups of branched covers to the monodromy groups of their restrictions. A branched cover is a dominant map \[\pi: X \to Y
\] of irreducible varieties of the same dimension.
We identify its monodromy group $G_{\pi}$  with a subgroup of the symmetric group $S_d$ by choosing a generic fibre of $\pi$ and a labeling of its $d$ points.

For an irreducible subvariety $V \subseteq Y$, we examine the restriction of $\pi$ to $\pi^{-1}(V)$, denoted by
\[
\pi_V: \pi^{-1}(V) \to V.
\]
When this restriction is a branched cover of the same degree as $\pi$, there is a natural containment of monodromy groups $G_{\pi_V} \leq G_\pi$.  In general, little more may be said about the relationship between $G_{\pi_V}$ and $G_\pi$. Thus, we focus on restricted branched covers over subvarieties $V$ which are \textit{generic level sets} of a fixed dominant map
\[
\psi: Y \to Z.
\]
Such restrictions are commonplace throughout the literature. For example, when $Y = \mathbb{C}^n$, a generic projection $\psi: Y \to \mathbb{C}^{n-1}$ has lines as level sets, and the monodromy group restricted to a \emph{generic} line of this form coincides with the original group $G_{\pi}$ \cite{Zar29,NumericalGalois18}. Bounds on the dimension of the subvariety of projections $\psi$ that preserve the monodromy group are also known \cite{Poon20}. 

Our main result extends properties of $G_{\pi_V}$ to $G_{\pi}$ when $V$ is a generic level set and $G_{\pi_V}$ contains a transposition. The list in the following theorem gives a blueprint for our paper.

\begin{theorem}
    \label{thm:main_theorem}
    Suppose $\pi: X \to Y$ is a branched cover of degree $d$, $\psi: Y \to Z$ is a dominant map with generic level set $V$, and $\pi_V$ is a branched cover of degree $d$ such that $G_{\pi_V}$ contains a transposition.  If $G_{\pi_V}$ is not the full symmetric group, then 
    \begin{enumerate}
        \item The orbits $\mathcal B_V$ of the subgroup generated by transpositions in  $G_{\pi_V}$ form a block system of~$G_{\pi_V}$,       
        \item $G_{\pi_V}$ is permutation-isomorphic to a full wreath product $S_k \wr H$ for some action  $H \actson \mathcal B_V$,
        \item $\mathcal B_V$ is the unique minimal block system of $G_{\pi_V}$, and
       \item  $\mathcal B_V$ is the unique minimal block system of $G_{\pi}$
    \end{enumerate}
    Therefore, $G_{\pi_V}$ is the full symmetric group if and only if $G_\pi$ is the full symmetric group.
\end{theorem}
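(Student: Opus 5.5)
The plan is to get (1)–(3) from permutation-group arguments that use only the transpositions in $G_{\pi_V}$ and its transitivity. For (4) I will show that $G_{\pi_V}$ is a normal subgroup of $G_\pi$; that is the one geometric input. Since $\pi_V$ is a branched cover of the irreducible variety $V$, $G_{\pi_V}$ is transitive on the $d$ points of a generic fibre over $V$. Choosing that fibre as the reference fibre for $\pi$ gives $G_{\pi_V}\le G_\pi\le S_d$.

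\textbf{Steps (1) and (2).} Let $T\le G_{\pi_V}$ be the subgroup generated by all transpositions in $G_{\pi_V}$. Conjugation sends transpositions to transpositions, so $T$ is normal, indeed characteristic, in $G_{\pi_V}$. Consider the graph on $\{1,\dots,d\}$ whose edges are the transpositions of $G_{\pi_V}$. The orbits of $T$ are its connected components, and on each component $T$ induces the full symmetric group, because a connected set of transpositions generates the symmetric group on its support. Hence $T=\prod_{B\in\mathcal B_V} S_B$.

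The orbits of a normal subgroup of a transitive group form a block system, so all $T$-orbits have a common size $k$; this is (1). We have $k\ge 2$ because a transposition exists, and there are $m=d/k\ge 2$ blocks because otherwise $T=S_d=G_{\pi_V}$. Since $G_{\pi_V}$ preserves $\mathcal B_V$, it embeds in $S_k\wr H$, where $H$ is the image of $G_{\pi_V}$ acting on $\mathcal B_V$. Since $G_{\pi_V}$ also contains the entire base group $T\cong S_k^m$, it equals $S_k\wr H$; this is (2).

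\textbf{Step (3).} Let $C$ be a nontrivial block of $G_{\pi_V}$ and pick distinct $a,b\in C$. Let $B\in\mathcal B_V$ be the block containing $a$, and take any $x\in B\setminus\{b\}$. The transposition $(a\,x)\in T$ fixes $b$. Therefore $(a\,x)C$ meets $C$, so $(a\,x)C=C$, which gives $x\in C$. This holds for every such $x$, so $B\subseteq C$ (when $b\in B$ it is already in $C$). Thus every nontrivial block is a union of blocks of $\mathcal B_V$. Consequently every nontrivial block system of $G_{\pi_V}$ is coarser than $\mathcal B_V$, and $\mathcal B_V$ is the unique minimal one.

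\textbf{Step (4), the main obstacle.} The key claim is that $G_{\pi_V}\trianglelefteq G_\pi$. To prove it, remove from $Y$ the branch locus of $\pi$ to get an open set $Y^\circ$. By generic smoothness and Thom--Whitney stratification (Verdier's generic local triviality), there is a dense open $U\subseteq Z$ over which $\psi\colon Y^\circ\cap\psi^{-1}(U)\to U$ is a locally trivial fibration whose fibres are the generic level sets $V^\circ$. The long exact homotopy sequence shows that the image of $\pi_1(V^\circ)$ is normal in $\pi_1(Y^\circ\cap\psi^{-1}(U))$. The inclusion of this set into $Y^\circ$ induces a surjection on $\pi_1$, since we removed a proper subvariety of a normal irreducible variety; at worst one first passes to the normalization.

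Pushing these maps into $S_d$ shows that $G_{\pi_V}$ is normal in $G_\pi$. Then $G_\pi$ normalizes $G_{\pi_V}$, hence normalizes the characteristic subgroup $T$, hence permutes the $T$-orbits. So $\mathcal B_V$ is a block system of $G_\pi$. Every block system of $G_\pi$ is also a block system of $G_{\pi_V}$, so by (3) it is either trivial or coarser than $\mathcal B_V$. Hence $\mathcal B_V$ is the unique minimal block system of $G_\pi$.

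\textbf{Final equivalence.} If $G_{\pi_V}=S_d$, then $G_\pi=S_d$ because $G_{\pi_V}\le G_\pi$. Conversely, if $G_{\pi_V}\ne S_d$, then by (4) $G_\pi$ preserves the nontrivial block system $\mathcal B_V$, whose blocks have size $2\le k<d$. So $G_\pi$ is imprimitive and not $S_d$.

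The delicate point is making the fibration and $\pi_1$-surjectivity argument rigorous, including the identification of the reference fibres used for $\pi$ and for $\pi_V$.
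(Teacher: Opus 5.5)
Your argument is correct. Parts (1)--(3) follow the paper's \Cref{lem:YoungBlocks}, \Cref{cor:uniformorimprimitive} and \Cref{cor:uniqueness}; your absorption argument for (3) is a slightly tidier form of the same transposition trick. For (4) you take a genuinely different route.

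\textbf{How the paper proves (4).} The paper never proves normality. It works locally:
\begin{itemize}
\item Thom's isotopy lemma gives a product structure on a Euclidean tube $V_W$ around one generic level set, so $\Pi_1(V_w-\Delta)\to\Pi_1(V_W-\Delta)$ is surjective.
\item \Cref{lem:blocks_to_blocks_in_leaves} shows that paths inside the tube carry minimal blocks over $u$ to minimal blocks over $u'$. This step uses the full-wreath structure and the uniqueness of the minimal block system.
\item A global loop is perturbed off $\psi^{-1}(K)$ and covered by finitely many tubes, chained together. Uniqueness is used once more to identify the transported block system with $\mathcal B_u$.
\end{itemize}

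\textbf{How your route compares.} You instead use the fibration globally over a Zariski-open $U\subseteq Z$. Exactness of $\pi_1(V^\circ)\to\pi_1(Y^\circ\cap\psi^{-1}(U))\to\pi_1(U)$ gives $G_{\pi_V}\trianglelefteq G_\pi$, and preservation of $\mathcal B_V$ then becomes pure group theory. This buys three things:
\begin{itemize}
\item a strictly stronger conclusion, namely normality, which the paper's local argument essentially contains but never states;
\item no covering-and-chaining bookkeeping;
\item uniqueness of the minimal block system is needed only for the ``unique minimal'' clause, not for preservation itself.
\end{itemize}
The topological inputs are essentially the same as the paper's: generic local triviality (Verdier's form of Thom's lemma), and $\pi_1$-surjectivity after deleting a proper subvariety. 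The latter needs $Y-\Delta$ to be locally irreducible. The paper uses this tacitly too, both in its commutative diagram and when it perturbs loops off $\psi^{-1}(K)$. The base-point issue you flag is harmless: base everything at a generic $u\in V_w-\Delta$ with $w\in U$, so both groups are images of fundamental groups at $u$ acting on $X_u$.

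\textbf{One correction.} $T(G_{\pi_V})$ need not be characteristic in $G_{\pi_V}$ as an abstract group. For $S_2\wr S_2\cong D_8\le S_4$, the outer automorphism swaps $T=\langle(1\,2),(3\,4)\rangle$ with $\langle(1\,3)(2\,4),(1\,4)(2\,3)\rangle$. What you actually use is weaker, and your own justification proves it: conjugation by any $g\in S_d$ that normalizes $G_{\pi_V}$ sends transpositions of $G_{\pi_V}$ to transpositions of $G_{\pi_V}$, hence fixes $T$. Phrase that step this way rather than via ``characteristic.''
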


Our main contribution is the fourth statement of \Cref{thm:main_theorem}, while the first three statements of the theorem consist of applications of elementary results in the present setting.  The first statement is purely group-theoretic: every permutation group $G$ preserves the orbit partition of any of its normal subgroups, and the subgroup $T(G)$ of $G$ generated by transpositions is normal in $G$. Since $G_{\pi_V}$ has a transposition but is not the full symmetric group, the orbits of $T\left(G_{\pi_V}\right)$ form a non-trivial partition $\mathcal B$ preserved by $G_{\pi_V}$, also known as a block system.  For the second statement, we note that groups preserving non-trivial partitions are not necessarily wreath products. However, in our case, since $T\left(G_{\pi_V}\right)$ is generated by transpositions, it is a product of symmetric groups on each of its orbits.  By the transitivity of $G_{\pi_V}$, each orbit has size $k$ for some $1<k<d$. We conclude that $G_{\pi_V}$ is permutation-isomorphic to a full wreath product $S_k \wr H$ where $H \actson \mathcal B$ is the induced action of $G_{\pi_V}$ on $\mathcal B$.  The third statement then follows directly from the previous statements.

The fourth statement says that, assuming the hypotheses of the theorem,  the block system $\mathcal B$ of $G_{\pi_V}$ is also a block system of  $G_\pi$. We prove this using the following approach: each generic point in $Y$ belongs to a generic level set $V$ of $\psi$, and thus the fibre over any generic point comes equipped with a minimal block system $\mathcal B_V$ by the first and third statements of the theorem. We prove, using Thom's isotopy lemma \cite{Thom1969}, that paths in a Euclidean neighborhood of $V$ which avoid the branch locus of $\pi$ respect these minimal block systems. Equipped with the uniqueness of $\mathcal B_V$, one may chain these neighborhoods together to cover a global monodromy loop of $\pi$, showing that the block system $\mathcal B_V$ is preserved globally as well. The local behavior of block systems over paths is illustrated in \Cref{fig:shematic_block_system}.  The final statement of \Cref{thm:main_theorem} is a corollary of the other statements and is the main tool in our applications.

\begin{figure}[!htpb]
\begin{center}
\begin{tikzpicture}[
    scale=1,
    >=stealth,
    floor/.style={fill=gray!8, draw=black, line width=0.6pt},
    curve/.style={draw=black, line width=0.8pt},
    point/.style={circle, fill=black, inner sep=1.4pt},
    lift/.style={draw=black, line width=0.6pt},
    box/.style={draw=black, rounded corners=1pt, inner sep=2pt},
    tau/.style={draw=black, line width=0.8pt, ->},
    fiberpath/.style={draw=black, dashed,line width=0.7pt, ->}
]
\coordinate (A) at (0,0);
\coordinate (B) at (7,0);
\coordinate (C) at (8.2,2.6);
\coordinate (D) at (1.2,2.6);
\draw[floor] (A) -- (B) -- (C) -- (D) -- cycle;
\node at (8.9,2.25) {$Y$};
\node at (8.9,4.2) {$X$};
\draw[curve]
    (2.25,0)
    .. controls (2.55,0.85) and (2.65,1.25) .. (2.95,1.65)
    .. controls (3.25,2.05) and (3.45,2.30) .. (3.75,2.6)
    node[above] {$V_{z'}$};
\draw[curve]
    (4.25,0)
    .. controls (4.55,0.75) and (4.65,1.15) .. (4.95,1.55)
    .. controls (5.25,1.95) and (5.45,2.20) .. (5.75,2.6)
    node[above] {$V_z$};
\coordinate (u)  at (4.95,1.55);
\coordinate (up) at (2.95,1.65);
\coordinate (upR) at (3.05,1.65);
\node[point, label=below right:{$u$}] at (u) {};
\node[point, label=above left:{$u'$}] at (up) {};
\draw[->, line width=0.8pt]
    (8.9,3.75) -- node[right] {$\pi$} (8.9,2.85);
\draw[tau]
    (u) .. controls (4.35,1.90) and (3.65,1.45) ..
    node[above] {$\tau$} (upR);
\coordinate (Utop)  at (4.95,3.55);
\coordinate (UPtop) at (2.95,3.65);
\draw[lift] (u) -- (Utop);
\draw[lift] (up) -- (UPtop);
\foreach \r in {0,1,2} {
    \draw[box]
        ($(Utop)+(-0.48,0.16+0.34*\r)$)
        rectangle
        ($(Utop)+(0.48,0.40+0.34*\r)$);
    \foreach \c in {0,1,2,3} {
        \node[point, inner sep=1.15pt]
            at ($(Utop)+(-0.33+0.22*\c,0.28+0.34*\r)$) {};
    }
}
\foreach \r in {0,1,2} {
    \draw[box]
        ($(UPtop)+(-0.48,0.16+0.34*\r)$)
        rectangle
        ($(UPtop)+(0.48,0.40+0.34*\r)$);
    \foreach \c in {0,1,2,3} {
        \node[point, inner sep=1.15pt]
            at ($(UPtop)+(-0.33+0.22*\c,0.28+0.34*\r)$) {};
    }
}
\draw[fiberpath]
    ($(Utop)+(-0.48,0.28)$)
    .. controls (4.35,3.95) and (3.75,4.5) ..
    ($(UPtop)+(0.51,0.62)$);

\draw[fiberpath]
    ($(Utop)+(-0.48,0.62)$)
    .. controls (4.35,4.35) and (3.7,3.40) ..
    ($(UPtop)+(0.51,0.28)$);

\draw[fiberpath]
    ($(Utop)+(-0.48,0.96)$)
    .. controls (4.35,4.75) and (3.55,4.80) ..
    ($(UPtop)+(0.51,0.96)$);
\end{tikzpicture}
\end{center}
\caption{An illustration of two generic points $u,u' \in Y$ and their respective generic level sets with respect to a map $\psi:Y\rightarrow Z$ (omitted from the diagram). Each generic point of $Y$ has twelve points in its fibre, which are partitioned into three quadruples giving a minimal block system. The twelve lifts of a path $\tau$ from $u$ to $u'$ with respect to $\pi$ respect this partition.}\label{fig:shematic_block_system}
\end{figure}

\subsection{Application to system solving}
Our main application and motivation for this work is computational. In \cite{STT23}, the second and third authors developed the \textit{sparse trace test}, which generalizes the \textit{(classical) trace test} \cite{Som02,Ley18} to zero-dimensional systems. Trace tests are essentially \textit{completion tests} for solution sets to parametrized polynomial systems. Thus, they are indispensable subroutines for solvers that build such solution sets incrementally (see, for example, \cite{DuffMonodromy}). A trace test checks completion by showing that the local extension of a \textit{trace function}, given by analytic continuation over the parameter space,  extends to a global function of the parameters.

In the classical setting, the polynomial system of interest represents zero-dimensional slices of an irreducible variety, where the slices themselves form the parameter space. The trace function is the coordinate-wise sum of the solutions. It is a function if and only if it is evaluated on the complete solution set, in which case it is known to be a linear function. The trace test therefore checks whether the coordinate-wise sum is a function by assessing this linearity numerically. The argument that incompleteness implies nonlinearity relies on the ability to swap a found-solution with a solution that has not been found by moving the affine linear slice. This works because the corresponding monodromy group is known to be the full symmetric group. 

In the sparse trace test, the trace function is the sum of a single coordinate of the solutions. In the current paper, functionality is assessed with respect to  moving a single coefficient in the defining equations, as illustrated in \Cref{fig:applications}(a). Indeed, moving only one point is a significant improvement over \cite{STT23}, where moving a subset of points is required.  This  family of polynomial systems is interpreted as a level set of a map which forgets only one coefficient of a polynomial system. By Khovanskii's result on the irreducibility of underdetermined sparse polynomial systems, when the level set is generic, the restricted map is indeed a branched cover \cite{Khov16}. Esterov's results pertaining to discriminants of sparse polynomial systems identify the many cases where these restricted monodromy groups contain a transposition \cite{Est19}. In this case, the machinery of our main theorem may be applied to show that the restricted monodromy group of the branched cover corresponding to moving a single coefficient is the full symmetric group. Thus, our work completes the converse of the sparse trace test.
\begin{figure}[!htpb]
    \centering
    \includegraphics[width=0.95\linewidth]{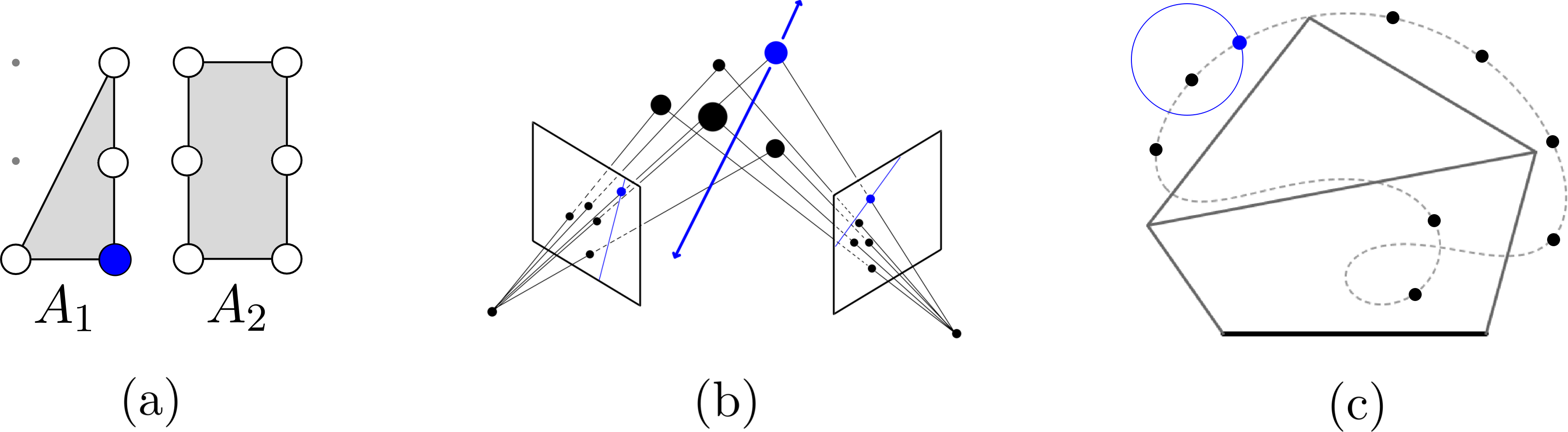}
    \caption{(a) Two supports representing a sparse polynomial system and their convex hulls. Varying the coefficient corresponding to the filled blue point represents restriction of the relevant branched cover to a level set. (b) The five-point minimal problem from algebraic vision. Varying the blue point along a line represents restriction to a level set. (c) The nine point synthesis problem. Varying the blue point along a circle represents the restriction to a level set.}
    \label{fig:applications}
\end{figure}

 A major contribution of this work is the development of the notion of a trace test in the general setting of branched covers, achieved in \Cref{sec:generalizedtrace}. This generalization is phrased in terms of whether a certain function on the universal cover of the base of a branched cover descends to a function on the base itself. It specializes to both the classical and sparse trace tests. Additionally, we formalize the aforementioned property of being able to \textit{swap} a found solution for a non-found solution group-theoretically and conjecturally provide a census of all permutation groups which have this property other than the natural alternating and symmetric groups.

\subsection{Application to monodromy groups}
Beyond the sparse trace test, our work is relevant to popular polynomial systems 
coming from applications such as algebraic vision 
\cite{DKPR22}, kinematics 
\cite{NumericalGalois18,DKPR23}, likelihood estimation 
\cite{NumericalGalois18}, power flow 
\cite{MNMH16},  discrete gravity 
\cite{AB24}, and rigidity theory 
\cite{MSW26}. In these settings, branched covers are often encoded via parametrized polynomial systems, and restrictions correspond to natural real-world constraints on the relevant parameter spaces. \Cref{fig:applications} displays two such manifestations of branched cover restrictions. \Cref{fig:applications}(b) represents the $5$ point minimal problem, where one seeks to reconstruct five points in three-dimensional space from two camera images, along with the relationship between those cameras. The five points form the parameter space, and the movement of one point along a line represents the restriction to a level set.  \Cref{fig:applications}(c) shows the nine-point synthesis problem, which is the planar interpolation problem for coupler curves of four bar mechanisms. Our image shows the restriction of this problem to a generic set of nine points, where one point is allowed to move freely along a circle.  Our results reinforce the structures of the monodromy groups computed for these types of systems in previous work.  For example, we explain why the monodromy groups computed in \cite[Results 3.1 and 4.1]{DKPR22} and \cite{AB24,MNMH16} are members of one of the following classes: full symmetric groups, full wreath products, or permutation groups without transpositions.

\section{Background on permutation groups} For any finite set $\Omega=\{x_1,\ldots,x_d\}$, we write $\mydef{S_\Omega}$ for the \mydefit{symmetric group on $\Omega$}, which consists of bijections from $\Omega$ to itself. An ordering of $\Omega$ identifies $S_\Omega$ with the more familiar symmetric group $\mydef{S_d}\vcentcolon=S_{[d]}$  on $\mydef{[d]}\vcentcolon= \{1,2,\ldots,d\}$. Any subgroup $G \leq S_\Omega$ is called a \mydefit{permutation group} and naturally inherits a group action on $\Omega$ from $S_\Omega$. 
The orbits $\Omega_1,\ldots,\Omega_m$ of this action partition $\Omega$. If $m=1$, then $G$ is called \mydefit{transitive}, and otherwise $G$ is called \mydefit{intransitive}. 

\subsection{Imprimitivity and wreath products} Let $\mathcal B=\{B_1,\ldots,B_b\}$ be a set partition of $\Omega$. We say $G$ \mydefit{preserves}  $\mathcal B$  if for all $g \in G$ and $B \in \mathcal B$
\begin{equation}
    \label{eq:preserves_partition}
\mydef{g(B)} \vcentcolon=\{g(b) \mid b \in B\}= B' \text{ for some }B' \in \mathcal B.
\end{equation}
Every permutation group preserves the finest and the coarsest partitions, called the \mydefit{trivial partitions}. Similarly, every permutation group preserves its orbit partition. 
If a \emph{non-trivial partition} is preserved by a \textit{transitive group} $G$, then the partition is called a \mydefit{block system} and the parts are called \mydefit{blocks}.  The blocks of a block system all have the same size. 
If a transitive group $G$ preserves a block system $\mathcal B$, then $G$ is called \mydefit{imprimitive}, and if no such block system exists, then $G$ is called \mydefit{primitive}.   We say that  $\cB$ is \mydefit{minimal} if $G$ preserves no non-trivial refinement of $\mathcal B$. 

Given two permutation groups $Q \leq S_k$ and $H \leq S_b$, the \mydefit{wreath product} of $Q$ by $H$, denoted by $Q \wr H$, is the semidirect product $Q \wr H = Q^b \rtimes H$ where $H$ acts on the factors of $Q\times\cdots\times Q$ by permuting indices. Such a wreath product is naturally realized as a permutation subgroup of $S_{kb}$ as the join of the intransitive action 
\[\underbrace{Q \times \cdots \times Q}_{b} \leq \underbrace{S_k \times \cdots \times S_k}_{b} \leq S_{kb}.
\]
That is, an element $(q_1,\ldots,q_b;h)$ acts on $[k]\times[b]$ as $(i,j) \mapsto (q_{h(j)}(i),h(j))$. If $Q=S_k$ is the full symmetric group, we say that $Q \wr H$ is the \mydefit{full wreath product over $H$}. 

Given a partition $\mathcal{B}=\{B_1,\dots,B_b\}$ of a set $\Omega$, the permutations which preserve $\mathcal B$ comprise a group denoted by $\mydef{\textrm{Wr}(\mathcal B)}$.  If $\mathcal B$ is a block system of some $G \leq S_{\Omega}$, then $G$ is a transitive subgroup of $\textrm{Wr}(\mathcal B)$, every $B_i$ has the same size $k$, and  $\textrm{Wr}(\mathcal B)$ is permutation-isomorphic (that is, equal after relabeling $\Omega$) to the representation of $S_k\wr S_b$ given above.  Moreover, associated with $G$ are the following groups:
\begin{itemize}
\item $H\leq S_{\mathcal{B}}$, the induced action of $G$ on the set of blocks and
\item $K\leq\prod_{i=1}^b S_{B_i}$, the kernel of that action.
\end{itemize}
In particular, $K$ is normal in $G$ and $G/K\cong H$. 
Since $G$ is transitive, so is $H$. There are three natural associated actions on a single block $B_i$:
\begin{itemize}
\item $Q'_i$, the induced action of the pointwise stabilizer of the complement of $B_i$,
\item $Q''_i$, the induced action of $K$ on $B_i$, and 
\item $Q'''_i$, the induced action of $\textrm{Stab}_G(B_i)$ on $B_i$.
\end{itemize}
We observe that $Q'_i\leq Q''_i\leq Q'''_i$ and that 
$$\prod_{i=1}^b Q'_i\leq K\leq \prod_{i=1}^b Q''_i\leq \prod_{i=1}^b Q'''_i.$$
Since $G$ is transitive, all of the $Q'_i$, $Q''_i$, or $Q'''_i$ are permutation-isomorphic to a common $Q'$, $Q''$, or $Q'''$, respectively. One may take $Q',Q'',Q''' \leq S_k$ and label  the elements of each $B_i$ by $[k]$ so that each $Q'_i$, $Q''_i$, or $Q'''_i$ is equal to the common permutation group $Q'$, $Q''$, or $Q'''$, respectively.  
The following example shows that the inclusions $Q'\leq Q''\leq Q'''$ may be proper. However, if they all coincide with some group $Q$, then $K=Q^b$, and $G$ is permutation-isomorphic to the wreath product $Q^b\rtimes H=Q\wr H$ in its natural permutation representation.

\begin{example}
When $G$ is an imprimitive group, but not a wreath product, it is possible that $Q'$, $Q''$, and $Q'''$ are distinct.  For example, when $G\leq S_9$ is generated by $(1,2,3)(4,5,6)(7,8,9)$ and $(1,4,7)(2,5,8)(3,6,9)$, then $Q'$ is trivial while $Q''$ is the cyclic group on three elements.  On the other hand, when $G\leq S_8$ is generated by $(1,3,5,7)(2,4,6,8)$ and $(1,2)(3,8)(4,7)(5,6)$, then $Q''$ is trivial while $Q'''$ is the symmetric group on two elements.
\end{example}

\subsection{Groups with transpositions}
 We define the \mydefit{transpositional subgroup} of a permutation group $G$ to be the group $T(G) \leq G$ generated by all transpositions in $G$. If there are no transpositions in $G$, $T(G)$ is the trivial subgroup. Since conjugation preserves transpositions, the subgroup $T(G)$ is normal in $G$, so $G$ preserves the orbits of $T(G)$. 
\begin{lemma}\label{lem:YoungBlocks}
    Fix $G \leq S_{\Omega}$ and let $\Omega_1,\ldots,\Omega_m$ be the orbits of $T(G)$, then 
    \[
    T(G) = S_{\Omega_1}\times S_{\Omega_2} \times \cdots \times S_{\Omega_m}
    \]
    and $G$ preserves the orbit partition $\{\Omega_1, \ldots, \Omega_m\}$ of $T(G)$.
\end{lemma}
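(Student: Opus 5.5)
The plan is to prove the two claims separately. The second one, that $G$ preserves the orbit partition, has already been noted just before the lemma: conjugation sends transpositions to transpositions, so $T(G)\trianglelefteq G$. Orbits of a normal subgroup are permuted by the ambient group, since $g(T(G)x)=T(G)g(x)$. Hence for each $g\in G$ and each $i$, $g(\Omega_i)$ is again an orbit $\Omega_j$. That is exactly condition~\eqref{eq:preserves_partition}.

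For the product decomposition, first observe that every transposition in $G$ swaps two points lying in a common orbit of $T(G)$. So every generator lies in $S_{\Omega_1}\times\cdots\times S_{\Omega_m}$, which gives the inclusion $T(G)\subseteq\prod_i S_{\Omega_i}$. For the reverse inclusion it suffices to show that $S_{\Omega_i}\leq T(G)$ for each $i$.

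To do this, fix an orbit $\Omega_i$ and form the graph $\Gamma_i$ on vertex set $\Omega_i$ with an edge $\{a,b\}$ whenever $(a\,b)\in G$. Since $T(G)$ is generated by transpositions, two points lie in the same $T(G)$-orbit exactly when they are joined by a path of such edges. In particular $\Gamma_i$ is connected, and we only use the easy direction: an orbit of a group generated by transpositions is a union of connected components, and being a single orbit forces connectivity. The standard fact then applies: a set of transpositions whose graph is connected on $\Omega_i$ generates $S_{\Omega_i}$. I would prove this by showing that every transposition $(a\,b)$ with $a,b\in\Omega_i$ lies in $\langle \Gamma_i\rangle$. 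Take a path $a=v_0,v_1,\dots,v_r=b$ in $\Gamma_i$ and induct on $r$, using the conjugation identity $(v_0\,v_{r})=(v_{r-1}\,v_r)(v_0\,v_{r-1})(v_{r-1}\,v_r)$. Since all transpositions generate the symmetric group, $S_{\Omega_i}\leq T(G)$. The factors $S_{\Omega_i}$ have disjoint supports, so they commute and intersect trivially, and their product is the internal direct product, contained in $T(G)$. This gives equality.

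The only step needing any care is the connectivity-implies-generation argument, which is routine. Everything else is immediate from normality.
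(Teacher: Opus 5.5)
Your proof is correct and follows the paper's: both rest on the fact that a transitive group generated by transpositions is the full symmetric group and on $T(G)$ being normal in $G$ (you use normality directly for the orbit partition, while the paper conjugates a transposition $(i\,j)$ after first establishing the product structure). Your explicit observation that each $(a\,b)$ with $a,b\in\Omega_i$ lies in $T(G)$, so that $S_{\Omega_i}\le T(G)$ as an actual subgroup, spells out what the paper's ``and so they must be equal'' leaves implicit. One small slip: connectivity of $\Gamma_i$ needs the direction that two points in the same $T(G)$-orbit are joined by a walk, which you get by writing $t\in T(G)$ as a word in transpositions of $G$ and following $a$ step by step; the fact that orbits are unions of connected components does not by itself give connectivity.
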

\begin{proof} 
    The transpositional subgroup $T(G)$ acts transitively on each orbit $\Omega_i$. Each transitive action $T(G) \actson \Omega_i$ is generated by transpositions and therefore must be the symmetric group $S_{\Omega_i}$. On the other hand, $T(G) \leq \prod_{i=1}^m S_{\Omega_i}$, and so they must be equal.

    To see that $G$ preserves the orbit partition of $T(G)$, fix $g \in G$. Suppose that $i$ and $j$ belong to the same orbit  of $T(G)$, or equivalently, since $T(G)$ is the product of symmetric groups, $(i,j)\in T(G)$. Then $g(i,j)g^{-1}$ is the transposition $(g(i),g(j))$, which must be in $T(G)$. This proves $g(i)$ and $g(j)$ belong to the same orbit of $T(G)$.  The converse holds by replacing $g$ with $g^{-1}$.
\end{proof}

The following two corollaries of \Cref{lem:YoungBlocks} prove the first three parts of \Cref{thm:main_theorem}.
\begin{corollary}\label{cor:uniformorimprimitive}
Let $G \leq S_{\Omega}$ be a transitive permutation group, and let $\mathcal B=\{\Omega_1,\ldots,\Omega_m\}$ be the orbit partition of  $T(G)$. Then $G$ is the full wreath product $G \cong S_k \wr H$ over the action $H$  of $G$ on $\mathcal B$ and $k=|\Omega|/m=|\Omega_i|$. 
In particular, if $G$ contains a transposition, then either 
\begin{enumerate}
    \item $k=|\Omega|$, $m=1$, and $G$ is the full symmetric group, or 
\item  $k<|\Omega|$ and $G$ is imprimitive with block system $\mathcal B$. 
\end{enumerate}
\end{corollary}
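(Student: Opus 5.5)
We will prove the corollary by combining \Cref{lem:YoungBlocks} with transitivity of $G$ and the criterion, recalled above, that $G$ is the wreath product $Q\wr H$ whenever $Q'=Q''=Q'''=Q$.

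\emph{Step 1: the parts of $\mathcal B$ have equal size.} By \Cref{lem:YoungBlocks}, $G$ preserves the orbit partition $\mathcal B$ of $T(G)$. Transitivity of $G$ then forces all parts to have the same size. Indeed, given $\Omega_i,\Omega_j$, pick $x\in\Omega_i$, $y\in\Omega_j$ and $g\in G$ with $g(x)=y$. Then $g(\Omega_i)$ is a part containing $y$, so $g(\Omega_i)=\Omega_j$, and $g$ is a bijection between them. Hence $k=|\Omega_i|=|\Omega|/m$. In particular $G\le \mathrm{Wr}(\mathcal B)$, so the groups $H$, $K$, $Q',Q'',Q'''$ are defined. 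This holds even in the degenerate cases $k=1$ (discrete partition) and $m=1$, where the statement reads $G\cong S_1\wr G$ or $G= S_{|\Omega|}\wr 1$.

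\emph{Step 2: identify the block groups.} By \Cref{lem:YoungBlocks}, $T(G)=\prod_i S_{\Omega_i}\le G$. Every element of $T(G)$ fixes each block setwise, so $T(G)\le K$. Moreover, the factor $S_{\Omega_i}$ fixes the complement of $\Omega_i$ pointwise, so it lies in the pointwise stabilizer of that complement. Thus $Q'_i\supseteq S_{\Omega_i}$, and the chain $Q'_i\le Q''_i\le Q'''_i\le S_{\Omega_i}$ collapses to $Q'_i=Q''_i=Q'''_i=S_{\Omega_i}$. We then get
\[
K=\prod_i S_{\Omega_i}\cong S_k^m,
\]
since $\prod Q'_i\le K\le\prod Q''_i$. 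By the remark preceding the example, $G$ is permutation-isomorphic to $S_k\wr H$, where $H=G/K$ is the action on $\mathcal B$. Concretely, after labeling each block by $[k]$, the group $G$ is $K\rtimes$ (a complement). Equivalently, $G=\mathrm{Wr}$-preimage of $H$: any $g\in\mathrm{Wr}(\mathcal B)$ inducing $h\in H$ differs from an element of $G$ by an element of $\prod S_{\Omega_i}=K\le G$. This last observation is the cleanest way to justify the wreath product identification without worrying about splitting, and I would present it that way. It is the only step requiring care.

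\emph{Step 3: the dichotomy.} If $G$ contains a transposition, then some $\Omega_i$ has size at least $2$, so $k\ge2$. If $k=|\Omega|$, then $m=1$ and $G\supseteq T(G)=S_\Omega$, so $G$ is the full symmetric group. Otherwise $1<k<|\Omega|$, so $\mathcal B$ is a non-trivial partition preserved by the transitive group $G$, i.e. a block system, and $G$ is imprimitive. No step seems a genuine obstacle; the main subtlety is the wreath-product identification in Step 2, handled by the preimage argument.
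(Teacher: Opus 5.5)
Your proposal is correct and follows essentially the same route as the paper. \Cref{lem:YoungBlocks} gives $G\le \mathrm{Wr}(\mathcal B)$ with $\prod_i S_{\Omega_i}=T(G)\le K$, so the chain $Q'\le Q''\le Q'''$ collapses to the full symmetric group on a block and $G\cong S_k\wr H$, after which the dichotomy is read off exactly as you do; your equal-block-size argument and your observation that $G$ is the full preimage of $H$ in $\mathrm{Wr}(\mathcal B)$ just make explicit what the paper leaves to its general remark on wreath products.
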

\begin{proof}
    By \Cref{lem:YoungBlocks}, the orbits $\mathcal B = \{\Omega_1,\ldots,\Omega_m\}$ of the transpositional subgroup $T(G)$ are preserved by $G$, so $G \leq \textrm{Wr}(\mathcal B)$. Since $G$ is transitive, it induces a transitive action $H \leq S_m$ on the orbits with kernel $K \leq \prod_{i=1}^{m} S_{\Omega_i}$. Since $\prod_{i=1}^m S_{\Omega_i} = T(G) \leq K$, the subgroup containments collapse and $K=T(G)$.  In particular, we observe that for any $i$, $G$ contains elements that fix all elements not in $\Omega_i$ and applies any permutation to $\Omega_i$.  Hence, from the discussion of wreath products, $Q'=S_{\Omega_i}$.  Since $Q'$ is as large as possible, it follows that the containments of $Q'\leq K\leq Q''\leq Q'''$ collapse, $G/T(G) = H$, and $G = T(G)\rtimes H\cong S_k \wr H$.
    
    If $G$ contains no transpositions, then $k=1$, $H=G$, and $G$ is the full (trivial) wreath product $S_1 \wr G$. If $k=|\Omega|$, then $m=1$ and $G \cong S_{\Omega} \wr \{e\} = S_{\Omega}$. If $G$ contains a transposition but is not the full symmetric group, then \Cref{lem:YoungBlocks} implies that $G$ is imprimitive with a non-trivial block system given by $\mathcal B$.
\end{proof}

\begin{corollary}\label{cor:uniqueness}Let $G \leq S_{\Omega}$ be an imprimitive group with block system $\mathcal B=\{B_1,\dots,B_b\}$ such that
\[
G=\left(\prod_{i=1}^b S_{B_i}\right)\rtimes H,
\]
where $H$ is the action of $G$ on the blocks of $\mathcal B$. Then $\mathcal B$ is the unique minimal block system of $G$.
 
\end{corollary}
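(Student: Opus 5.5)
We show that every block system of $G$ is a coarsening of $\mathcal B$ and that $\mathcal B$ is itself a block system. The second point is immediate from the hypothesis: $G$ is imprimitive with block system $\mathcal B$. So minimality of $\mathcal B$ and uniqueness both follow once we show that every block system $\mathcal C$ of $G$ is refined by $\mathcal B$. Indeed, any minimal block system $\mathcal C$ would then satisfy $\mathcal B \preceq \mathcal C$. By minimality of $\mathcal C$ and the fact that $\mathcal B$ is a non-trivial partition preserved by $G$, this forces $\mathcal C=\mathcal B$. The same containment also shows that $\mathcal B$ admits no proper non-trivial preserved refinement, so $\mathcal B$ is minimal.

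The key step is to fix a block system $\mathcal C$ of $G$ and show that each $B_i$ lies inside a single block of $\mathcal C$. Take $x,y\in B_i$ with $x\neq y$. By hypothesis $S_{B_i}\le G$, so the transposition $t=(x,y)$ lies in $G$. Let $C\in\mathcal C$ be the block containing $x$. The block $t(C)$ contains $t(x)=y$.

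We split into two cases according to the size of $C$.
\begin{itemize}
\item If $|C|\ge 2$, the block $C$ has some element $z\neq x$. If $z\neq y$, then $z$ is fixed by $t$, so $z\in C\cap t(C)$. Hence $t(C)=C$, and therefore $y\in C$. If $z=y$, then $y\in C$ already.
\item If $|C|=1$, then all blocks of $\mathcal C$ are singletons, because blocks have equal size. This contradicts non-triviality of $\mathcal C$.
\end{itemize}
In either case $x$ and $y$ lie in the same block of $\mathcal C$, so $B_i$ is contained in a block of $\mathcal C$, and thus $\mathcal B$ refines $\mathcal C$.

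The argument only uses that $G$ contains every transposition inside each $B_i$, which is guaranteed by $\prod_i S_{B_i}\le G$. There is no real obstacle. The only care needed is the edge case of singleton blocks, which the definition of block system excludes, and the remark that the same argument also shows $\mathcal B$ is minimal, not merely below every other block system.
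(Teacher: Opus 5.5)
Your proof is correct and uses the same key idea as the paper. A transposition $(x,y)$ inside a block of $\mathcal B$ fixes some other point of the block of $\mathcal C$ containing $x$, so it must stabilize that block. The paper phrases this as a contradiction against a second minimal block system $\mathcal B'\neq\mathcal B$, whereas you show directly that $\mathcal B$ refines every block system, which also makes the minimality of $\mathcal B$ itself explicit.
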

\begin{proof}
Suppose that $\mathcal B' \neq \mathcal B$ is a minimal block system of $G$. Then there exist $i,j \in \Omega$ such that $i$ and $j$ belong to the same block $B \in \mathcal B$, but belong to distinct blocks $B'_i$ and $B'_j$ of $\mathcal B'$, respectively. Indeed, if no such pair existed, then every block of $\mathcal B$ would be contained in a block of $\mathcal B'$, so $\mathcal B$ would refine $\mathcal B'$, contradicting the minimality of $\mathcal B'$.

Since $\mathcal B'$ is a block system, its blocks are not singletons. Hence, there exists $i' \neq i$ such that $i' \in B'_i$.
Now $i,j \in B$ for some block $B \in \mathcal B$, and by hypothesis, $\{e\}^{b-1}\times S_B$ is contained in $G$.  In particular, the transposition $(i,j)$ belongs to $G$. This transposition fixes $i'$, while exchanging $i$ and $j$. Thus $(i,j)$ sends the block $B'_i$ to a set containing both $i'$ and $j$, but not $i$, so $(i,j)B'_i$ is neither equal to $B'_i$ nor disjoint from it. Therefore $(i,j)$ does not preserve the partition $\mathcal B'$, contradicting the assumption that $\mathcal B'$ is a block system for $G$.
\end{proof}

\section{Monodromy of Branched Covers}\label{sec:MonodromyBranched}
Our main object of study is a dominant map
\begin{equation}
    \label{eq:branched_cover}
    \pi:X \to Y,
\end{equation}
 of irreducible varieties of the same dimension. Such a map is called a \mydefit{branched cover}.  There exists a dense open and connected set $\mydef{\mathcal U}\vcentcolon=Y - \Delta$ of the base space $Y$ over which the map $\pi|_{\pi^{-1}(\cU)}$ is a topological covering space of degree $d$. Indeed, $\Delta$ may be taken to be a proper subvariety of $Y$, and is called the \mydefit{branch locus} of the branched cover.  For any $u \in \mathcal U$, we denote the fibre $\pi^{-1}(u)$ by $\mydef{X_u}$. By the theory of covering spaces, a path $\tau:[0,1] \to \mathcal U$ lifts to $d$ paths, each beginning at a distinct point $\{x_1,\ldots,x_d\}$ in the fibre  $X_{\tau(0)}$ and ending at distinct points in the fibre $X_{\tau(1)}$. Writing these paths as $x_i(t):[0,1] \to X$ gives a bijection
\begin{align*}
\sigma_{\tau}:X_{\tau(0)} &\to X_{\tau(1)} \\
 x_i(0) &\mapsto x_i(1).
\end{align*}
The map $\tau \mapsto \sigma_\tau$ is independent of endpoint-preserving homotopies.  That is, it is well-defined on the \mydefit{fundamental groupoid} $\mydef{\Pi_1(\mathcal U)}$ which, as a set, is comprised of endpoint-preserving homotopy classes of paths in $\cU$. The groupoid operation is concatenation of paths $(\gamma, \tau) \mapsto \gamma \sqcdot \tau$  (using the notation of \cite[p.26]{Hatcher}) which is defined if and only if the appropriate endpoints agree. 

We write $\mydef{\Pi_1(\mathcal U,u,u')}$ for the set of endpoint-preserving homotopy classes of paths from $u$ to $u'$.  From this, we construct the \mydefit{universal cover} of $\cU$, whose underlying set consists of all equivalence classes of paths starting at a fixed point $u$, and denote it by 
$$
\mydef{\Pi_1(\cU,u,\cdot)}\vcentcolon=\coprod_{u'\in\cU}\Pi_1(\cU,u,u').
$$
If the endpoints of a path $\gamma$ are equal, then we call $\gamma$ a \mydefit{loop} based at $u=\gamma(0)=\gamma(1)$. The classes of all loops in $\mathcal U$ based at $u$ is called the \mydefit{fundamental group} $\mydef{\Pi_1(\mathcal U,u)}\vcentcolon=\Pi_1(\mathcal U,u,u)$ of $\mathcal U$ based at $u$. The groupoid operation on $\Pi_1(\mathcal U)$ restricts to a group operation on $\Pi_1(\mathcal U,u)$.

The map $\gamma \mapsto \sigma_\gamma$ restricted to $\Pi_1(\mathcal U,u)$ is a well-defined group homomorphism to the symmetric group $S_{X_u} \cong S_d$, which we call the \mydefit{monodromy homomorphism}. The image of the monodromy homomorphism is the \mydefit{monodromy group of $\pi$ based at $u$}, and it is denoted by $\mydef{G_{\pi,u}}$. Since $\mathcal U$ is connected, the fundamental group of $\mathcal U$ is well-defined up to conjugation by an element in  the fundamental groupoid.  Specifically, for any $[\tau]\in\Pi_1(\cU,u,u')$, we have the isomorphism
\begin{align*}
\Phi_\tau:\Pi_1(\mathcal U,u) &\xrightarrow{\cong} \Pi_1(\mathcal U,u')\\
[\gamma] &\mapsto [\tau^{-1} \sqcdot \gamma \sqcdot \tau].
\end{align*}
Thus, any two monodromy groups $G_{\pi,u}$ and $G_{\pi,u'}$ are isomorphic via conjugation by $\sigma_\tau$.  Consequently, identifying any $G_{\pi,u} \leq S_{X_u}$ with a subgroup $G_{\pi} \leq S_d$ is well-defined up to conjugation in $S_d$ and this conjugacy class does not depend on the base point. We call  $G_{\pi}$ \mydefit{the monodromy group} of $\pi$. 

\begin{lemma}
    If $\pi:X \to Y$ is a branched cover, then $G_{\pi}$ is transitive. 
\end{lemma}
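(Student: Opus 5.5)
The plan is the standard argument: transitivity of monodromy is equivalent to connectedness of the covering space $\pi^{-1}(\mathcal U)$, and this connectedness follows from the irreducibility of $X$. Concretely, fix $u \in \mathcal U$ and two points $x, x' \in X_u$. We want a loop $\gamma$ in $\mathcal U$ based at $u$ with $\sigma_\gamma(x) = x'$. Suppose we have a path $\tilde\gamma$ in $\pi^{-1}(\mathcal U)$ from $x$ to $x'$. Then $\gamma \vcentcolon= \pi\circ\tilde\gamma$ is a loop in $\mathcal U$ based at $u$. By uniqueness of path lifting, $\tilde\gamma$ is the lift of $\gamma$ starting at $x$, so $\sigma_\gamma(x) = x'$. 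Hence $G_{\pi,u}$ acts transitively on $X_u$, and the same then holds for $G_\pi$ up to conjugation.

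So the work reduces to showing that $\pi^{-1}(\mathcal U)$ is path-connected in the Euclidean topology. Since $\Delta \subsetneq Y$ is a proper subvariety, $\pi^{-1}(\Delta)$ is a Zariski-closed subset of $X$. It is proper, because $\pi$ is dominant and so its image is not contained in $\Delta$. Therefore $\pi^{-1}(\mathcal U) = X - \pi^{-1}(\Delta)$ is a nonempty Zariski-open subset of the irreducible variety $X$.

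I will then invoke the classical fact that a nonempty Zariski-open subset of an irreducible complex variety is connected in the Euclidean topology. This follows, for instance, from connectedness of irreducible varieties in the analytic topology together with the fact that removing a proper subvariety, which has real codimension at least $2$, does not disconnect. For locally path-connected spaces such as complex varieties, connectedness and path-connectedness agree, so $\pi^{-1}(\mathcal U)$ is path-connected.

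The main obstacle is this last topological input. It is a classical but nontrivial result, found in Shafarevich and in Mumford's treatment, that irreducibility implies connectedness of Zariski-open subsets in the classical topology. I would cite it rather than reprove it. The remaining steps are formal consequences of covering space theory, as set up in the preceding discussion of $\sigma_\tau$.
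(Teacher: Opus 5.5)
Your proposal is correct and follows essentially the same route as the paper: you show that $\pi^{-1}(\mathcal U)=X-\pi^{-1}(\Delta)$ is a nonempty Zariski-open subset of the irreducible $X$, hence path-connected in the Euclidean topology, and then you project a path between two fibre points to get a monodromy loop. The only difference is that you cite the classical connectedness theorem explicitly (Shafarevich/Mumford), whereas the paper gives the same fact informally via the real-codimension-$2$ remark.
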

\begin{proof}
Since $\Delta$ is a proper subset of $Y$ and $\pi$ is dominant, $\pi^{-1}(\Delta)$ is a proper subvariety of $X$.  Since $X$ is irreducible, $\pi^{-1}(\Delta)$ is of real codimension at least $2$ in $X$.  Hence $\pi^{-1}(\mathcal U)=X- \pi^{-1}(\Delta)$ is path-connected.  Therefore, for $u \in \mathcal U$ there exists $\tau:[0,1] \to \pi^{-1}(\mathcal U)$ with endpoints $\tau(0)=x_i$ and $\tau(1)=x_j$ in $X_u$.  The projected path $\gamma = \pi \circ \tau$ is a loop for which $\sigma_{\gamma}(x_i)=x_j$. 
\end{proof}

\subsection{Decomposability and imprimitivity} A \mydefit{factorization} of a branched cover $\pi:X\rightarrow Y$ is a rewriting of $\pi$ as a composition of branched covers $\varphi_1$ and $\varphi_2$:
    \tikzstyle{longdashed}=[dash pattern=on 6pt off 3pt]
\begin{equation}
    \label{eq:decomposable}
    \begin{tikzcd}
  X \arrow[r,longdashed,"\varphi_1"] \arrow[rr,bend right=25,longdashed,"\pi" below] & X' \arrow[r,longdashed,"\varphi_2"] & Y
\end{tikzcd}
\end{equation}
where the dashed arrows indicate that the relevant spaces may be replaced by Zariski open subsets. A factorization is a \mydefit{decomposition} if the degrees $k$ and $b$ of $\varphi_1$ and $\varphi_2$, respectively, are both greater than one.  A branched cover is \mydefit{decomposable} if there exists a decomposition of $\pi$.  We say $\pi$ is \mydefit{indecomposable} if no such decomposition exists. The existence of a decomposition of a branched cover has group-theoretic implications for its monodromy group.
\begin{lemma}
    \label{lem:decomposable_preserves_blocks}
    Let $\pi:X\rightarrow Y$ be a branched cover. If $\pi$ decomposes as $\pi=\varphi_2\circ\varphi_1$, then for $u\in\cU$, $G_{\pi,u}$ preserves the block system 
    $\mathcal B_u\vcentcolon=\{\varphi_1^{-1}(x')\}_{x' \in \varphi_2^{-1}(u)}.
    $
\end{lemma}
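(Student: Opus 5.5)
We need to show that for any loop $\gamma$ based at $u$, the permutation $\sigma_\gamma$ sends each fibre $\varphi_1^{-1}(x')$ to another such fibre. First we shrink to a good open set. Since the arrows are rational maps, choose a dense Zariski open $\mathcal U' \subseteq \mathcal U$, with complement a proper subvariety, satisfying three conditions. Over $\mathcal U'$, the map $\varphi_2$ is a topological covering of degree $b$. Over $\varphi_2^{-1}(\mathcal U')$, the map $\varphi_1$ is defined and is a topological covering of degree $k$. Finally, $\pi=\varphi_2\circ\varphi_1$ holds on $\pi^{-1}(\mathcal U')$.

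The key reduction is to work with $\mathcal U'$ instead of $\mathcal U$. Because $\Delta\cup(Y\setminus\mathcal U')$ has real codimension at least $2$, the inclusion $\mathcal U'\hookrightarrow\mathcal U$ induces a surjection on fundamental groups. Hence every class in $\Pi_1(\mathcal U,u)$ is represented by a loop lying in $\mathcal U'$, and it suffices to treat such loops. For a base point $u\in\mathcal U\setminus\mathcal U'$, we would conjugate by a path into $\mathcal U'$. Alternatively, we can simply take $u\in\mathcal U'$, as the statement is meant generically. I would note this point and take $u$ generic.

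Now fix a loop $\gamma$ in $\mathcal U'$ and a point $x\in X_u$. Let $\tilde\gamma_x$ be the lift of $\gamma$ through $\pi$ starting at $x$. Then $\varphi_1\circ\tilde\gamma_x$ is a path in $X'$ lying over $\gamma$ and starting at $\varphi_1(x)$. By uniqueness of path lifting for the covering $\varphi_2$, this path equals the unique $\varphi_2$-lift of $\gamma$ starting at $\varphi_1(x)$. Consequently, two points $x,y$ with $\varphi_1(x)=\varphi_1(y)=x'$ have lifts whose images under $\varphi_1$ coincide, namely the $\varphi_2$-lift from $x'$. Their endpoints $\sigma_\gamma(x)$ and $\sigma_\gamma(y)$ therefore both lie in $\varphi_1^{-1}(\sigma'_\gamma(x'))$, where $\sigma'_\gamma$ is the monodromy of $\varphi_2$. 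So $\sigma_\gamma$ maps each block $\varphi_1^{-1}(x')$ into the block $\varphi_1^{-1}(\sigma'_\gamma(x'))$. Since $\sigma_\gamma$ is a bijection and every block has $k$ elements, each block maps onto a block.

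It remains to check that $\mathcal B_u$ is a genuine block system, meaning the partition is nontrivial. There are $b>1$ blocks, each of size $k>1$, and this holds because the factorization is a decomposition. Transitivity of $G_{\pi,u}$ was proved in the preceding lemma.

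The main obstacle is the technical shrinking step: ensuring that all three maps are simultaneously honest coverings over a common Zariski open set whose removal does not disconnect anything. This rests on generic finiteness and étaleness of dominant maps between equidimensional varieties, together with the fact that removing a proper subvariety from an irreducible variety leaves it connected and surjective on $\pi_1$. The path-lifting argument after that is routine.
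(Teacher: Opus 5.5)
Your proof is correct and is essentially the paper's argument. The paper lifts $\gamma$ first through $\varphi_2$ and then lifts that path through $\varphi_1$, obtaining the $\pi$-lifts of an entire block at once. You instead lift through $\pi$ and push down by $\varphi_1$, invoking uniqueness of $\varphi_2$-lifts, which is the same uniqueness-of-lifting idea run in the other direction. Your explicit shrinking to a common Zariski open set $\mathcal U'$, and your check that $\mathcal B_u$ is nontrivial, make precise what the paper leaves implicit in its dashed arrows.
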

\begin{proof}
    Fix $x'\in\varphi_2^{-1}(u)$ and set $B=\varphi_1^{-1}(x')=\{x_1,\ldots,x_k\}$.  Let $\gamma$ be a loop based at $u$ and $x'(t)$ be the lift of $\gamma$ via $\varphi_2$ such that $x'(0)=x'$. Then $\varphi_1$ lifts $x'(t)$ to the paths $B(t)=\{x_1(t),\dots,x_k(t)\}=\varphi_1^{-1}(x'(t))$, where $x_i(0)=x_i$.  In particular, $B(1) = \varphi_1^{-1}(x'(1))$, which is a part of $\mathcal B_u$ since $x'(1) \in \varphi_2^{-1}(u)$.  Hence, $\sigma_\gamma$ preserves the partition $\cB_u$.
\end{proof}

\Cref{lem:decomposable_preserves_blocks} shows that decomposable branched covers have imprimitive monodromy groups. The converse is also true.

\begin{lemma}[{cf. \cite[Proposition 1]{Bry21}}]
    \label{lem:imprimitive_iff_decomposable}
    A branched cover is decomposable if and only if its monodromy group is imprimitive.
\end{lemma}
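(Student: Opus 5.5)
The forward direction is \Cref{lem:decomposable_preserves_blocks}. If $\pi=\varphi_2\circ\varphi_1$ with $\deg\varphi_1=k>1$ and $\deg\varphi_2=b>1$, then the partition $\mathcal B_u$ of $X_u$ into the $b$ fibres of $\varphi_1$, each of size $k$, is preserved by $G_{\pi,u}$. Because $1<k<d$, this partition is non-trivial. Since $G_{\pi,u}$ is transitive, $\mathcal B_u$ is a block system, so $G_\pi$ is imprimitive.

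For the converse, I would follow the standard Galois-correspondence argument.
\begin{enumerate}
\item Suppose $G=G_{\pi,u}$ preserves a block system $\mathcal B=\{B_1,\dots,B_b\}$ with $1<b<d$. Fix $x\in B_1$ and let $B=B_1$. Set $L=\mathrm{Stab}_G(x)$ and $M=\mathrm{Stab}_G(B)$. Then $L<M<G$ with proper inclusions, $[G:M]=b$ and $[M:L]=k=d/b$.
\item Covering-space theory turns this chain of subgroups into a tower of covers. Over $\mathcal U$, $\pi^{-1}(\mathcal U)\to\mathcal U$ is the connected covering associated to the preimage of $L$ in $\Pi_1(\mathcal U,u)$. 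Let $\widetilde M$ be the preimage of $M$ under the monodromy homomorphism, and let $U'\to\mathcal U$ be the connected covering space corresponding to $\widetilde M$. It has degree $b$. Concretely, $U'$ is the quotient of $\pi^{-1}(\mathcal U)$ that identifies two points of a fibre exactly when they lie in the same block. This quotient is well defined over every point of $\mathcal U$ because, by the proof of \Cref{lem:decomposable_preserves_blocks} run in reverse, path-lifting transports blocks to blocks. Since $L\le M$, the covering $\pi^{-1}(\mathcal U)\to\mathcal U$ factors through $U'$, giving $\pi^{-1}(\mathcal U)\xrightarrow{\varphi_1}U'\xrightarrow{\varphi_2}\mathcal U$ with degrees $k$ and $b$.
\item The key step is algebraizing $U'$. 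By the Riemann existence theorem (Grauert--Remmert), a finite topological covering of the quasi-projective variety $\mathcal U$ carries a unique structure of a quasi-projective variety that makes it a finite étale algebraic cover. Moreover, continuous maps of covers over $\mathcal U$ are then algebraic. Hence $X':=U'$ is an irreducible variety, irreducible because it is connected since $G$ acts transitively on the blocks. The map $\varphi_2$ is a finite étale map of degree $b$, and $\varphi_1$ is a morphism. Thus $\varphi_1,\varphi_2$ are branched covers defined on Zariski open subsets, which is exactly the dashed-arrow factorization. Since $k,b>1$, it is a decomposition.
\end{enumerate}

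An alternative algebraic route avoids the analytic step. Pass to function fields: $\mathbb C(Y)\subseteq\mathbb C(X)$ is a degree-$d$ extension, and $G_\pi$ is the Galois group of its Galois closure acting on the $d$ embeddings, by the classical identification of the monodromy group with the Galois group (Harris). Blocks correspond to intermediate fields $\mathbb C(Y)\subsetneq F\subsetneq\mathbb C(X)$. Taking $X'$ to be a model of $F$ gives the rational maps $\varphi_1,\varphi_2$. The main obstacle in either route is the same: justifying that the topological or group-theoretic intermediate object is an algebraic variety with algebraic maps. In the first route this is where I would invoke Riemann existence. In the second it is the monodromy$=$Galois identification. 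Since the lemma is cited from \cite[Proposition 1]{Bry21}, I would present the topological construction and cite Riemann existence for algebraicity.
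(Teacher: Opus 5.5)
Your forward direction matches the paper's: \Cref{lem:decomposable_preserves_blocks} together with transitivity of $G_\pi$.

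For the converse, the paper gives no argument in the body and instead cites \cite{Bry21}. In \Cref{sec:fiberbranched}, however, it builds the intermediate variety algebraically. There $X'$ is the quotient $X^{\{k\}}_{\cB}=X^{(k)}_{\cB}/S_k$ of the irreducible component of the $k$-th fibre power that contains all orderings of the minimal blocks. So the points of $X'$ are literally blocks, and $\varphi_1$ sends a point to its block (\Cref{thm:explicitfactorization}). That construction is algebraic by design and concrete enough to compute with. But the paper only carries it out when $G_\pi$ contains a transposition, because it needs \Cref{lem:blockcomponent} (the $S_k\wr H$ structure) to know that the ordered blocks sweep out a single component.

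Your route produces the same intermediate object over $\cU$, namely the topological cover attached to $\mathrm{Stab}_G(B)$, and pays for algebraicity with the Riemann existence theorem. In exchange it handles an arbitrary block system with no transposition hypothesis, and full faithfulness makes $\varphi_1$ algebraic without further work. Your function-field variant is the other standard nonconstructive proof and is equally valid.

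A few small points need tightening.
\begin{itemize}
\item To use full faithfulness you need $\pi^{-1}(\cU)\to\cU$ to be finite \'etale as an algebraic map, not merely a topological covering. So first shrink $\cU$ by a proper Zariski-closed set, e.g.\ the singular locus of $Y$ and the non-\'etale locus of $\pi$. This does not change $G_\pi$.
\item Deduce irreducibility of $X'$ from the surjectivity of $\varphi_1$ on the irreducible set $\pi^{-1}(\cU)$, or from connectedness plus smoothness after that shrinking. Connectedness alone does not give irreducibility of an \'etale cover of a non-normal base.
\item The ``quotient by blocks'' is well defined because transporting $\cB$ along a path $\tau$ from $u$ to $u'$ gives $\sigma_\tau(\cB)$, which is independent of $\tau$ since loops at $u$ preserve $\cB$. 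That is the real justification, not \Cref{lem:decomposable_preserves_blocks} run in reverse. In any case the subgroup correspondence already makes this concrete description unnecessary.
\end{itemize}
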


\subsection{Restrictions}
\label{sec:restrictions}
We extend the notation for a fibre $X_u$ of $\pi$ over a point $u \in Y$ to the preimage of a subset $V \subseteq Y$ by writing $\mydef{X_V}\vcentcolon=\pi^{-1}(V)$. We write the restriction of $\pi$ to $\pi^{-1}(V)$ as
\[
\mydef{\pi_V}: X_V \to V.
\] 
When $V$ is an irreducible subvariety of $Y$ and $V\cap\cU\not=\emptyset$, then $\pi_V$ is a dominant degree-$d$ map. It is a branched cover if $X_V$ is also irreducible.  

All branched covers with interesting (that is, not full symmetric) monodromy groups appear as restrictions of branched covers with full symmetric monodromy groups. Thus, studying all restrictions of $\pi$ to subvarieties of $Y$ is too ambitious. Instead, we focus our attention on restrictions of branched covers to \textit{generic subvarieties}. To make this notion precise, consider a dominant map $\psi:Y \to Z$.  Level sets of the form $\mydef{V_z} \vcentcolon= \psi^{-1}(z)$, where $z$ is a generic point in $Z$, are called \mydefit{generic}.  If a generic level set of $\psi$ produces a branched cover $\pi_V$, then we say that  $\psi$  is \mydefit{$\pi$-generic}.

Given a $\pi$-generic map $\psi:Y \to Z$ and a point $u \in \mathcal U$, the inclusion $\iota:V_{\psi(u)}-\Delta\hookrightarrow \cU$ induces a map of fundamental groups
\[
\iota_\ast:\Pi_1(V_{\psi(u)} - \Delta, u ) \rightarrow \Pi_1(\mathcal U,u).
\]
Every loop in $V_{\psi(u)}-\Delta$ is also a loop in $\cU$, and so this map, although it may not be injective, induces an inclusion of $G_{\pi_{V_{\psi(u)}}}$ as a subgroup of $G_{\pi}$.

Our goal is to relate these two groups when $u$ is generic.  Note that if $u$ is generic, then $w=\psi(u)$ is also generic.  Our first result shows that for generic $w \in Z$, one may thicken the level set $\psi^{-1}(w)$ to $\mydef{V_W} \vcentcolon=\psi^{-1}(W)$, where $W \subseteq Z$ is a Euclidean neighborhood of $w$, without enlarging the fundamental group of the complement of $\Delta$. The proof applies Thom's isotopy lemma \cite{Thom1969}.

\begin{lemma}
    \label{lem:generic_strip_fundamental_equivalence}
    Let $\psi: Y \to Z$ be a $\pi$-generic dominant map and let $u \in \mathcal U$ be a generic point. There exists a Euclidean neighborhood $W$ of $w= \psi(u)$ for which the inclusion $\iota:V_w \hookrightarrow V_W$ induces a surjection on fundamental groups, that is
    \[ \iota_\ast:\Pi_1(V_{w}- \Delta,u) \twoheadrightarrow 
    \Pi_1(V_W- \Delta,u) .
    \]
\end{lemma}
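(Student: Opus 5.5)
The plan is to show that over a small Euclidean neighborhood $W$ of the generic point $w$, the pair $(V_W, V_W\cap\Delta)$ is topologically a product $(V_w,V_w\cap\Delta)\times W$. If so, $V_W-\Delta$ deformation retracts onto a fibre, and the claimed surjection follows. The input is Thom's isotopy lemma applied to the restriction of $\psi$.

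\textbf{Step 1: a Whitney stratification adapted to $\Delta$.} Choose a Whitney stratification of $Y$ (as a complex algebraic variety) in which $\Delta$, and $\operatorname{Sing}(Y)$ if needed, are unions of strata.

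\textbf{Step 2: generic position of $w$.} By generic smoothness applied to $\psi$ on each stratum, together with Verdier's generic stratified fibration theorem, there is a proper subvariety $\Sigma\subset Z$ with the following property: over $Z-\Sigma$, the map $\psi$ restricted to each stratum is a submersion, and $\psi$ is locally topologically trivial, compatibly with the stratification. Since $u$ is generic, $w=\psi(u)\notin\Sigma$.

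\textbf{Step 3: handling non-properness.} Thom's first isotopy lemma requires $\psi$ to be proper, and $\psi$ generally is not. To fix this, compactify. Take projective closures $\bar Y\supset Y$ and a graph closure so that $\psi$ extends to a proper map $\bar\psi:\bar Y'\to \bar Z$. Then stratify $\bar Y'$ so that $Y$, $\Delta$ and the boundary $\bar Y'-Y$ are all unions of strata, and enlarge $\Sigma$ to include the critical values of $\bar\psi$ on every stratum. Now apply Thom's first isotopy lemma to $\bar\psi$ over a small contractible ball $W\ni w$ with $W\cap\Sigma=\emptyset$. This gives a stratum-preserving homeomorphism
\[
\bar\psi^{-1}(W)\cong \bar\psi^{-1}(w)\times W
\]
commuting with projection to $W$. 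Because it preserves strata, it restricts to a homeomorphism $V_W-\Delta\cong (V_w-\Delta)\times W$.

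\textbf{Step 4: conclusion.} Since $W$ is contractible, the inclusion of the slice $V_w-\Delta\hookrightarrow V_W-\Delta$ is a homotopy equivalence. It therefore induces an isomorphism, and in particular a surjection, on $\Pi_1$ at base point $u$. Here we use that the trivialization can be chosen to be the identity on the fibre over $w$, which is standard.

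The main obstacle is Step 3: arranging properness while keeping $\Delta$ and the points at infinity as unions of strata, so that the trivialization really respects the complement $V_W-\Delta$ rather than only $V_W$. I would first try Verdier's theorem, which gives local triviality of algebraic maps off a proper subvariety without a properness hypothesis. I would fall back on the compactification argument of Step 3 if a direct citation is not acceptable.
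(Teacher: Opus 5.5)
Your proof is correct. Its core matches the paper's: a Whitney stratification, stratum-preserving local triviality of $\psi$ over a small ball $W$ around a generic $w$, then contracting $W$. You differ in how you handle $\Delta$ and non-properness.

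The paper only arranges $\Delta\subseteq Y_{m-1}$, the codimension-one skeleton of its filtration. Its trivialization therefore gives an isomorphism $\Pi_1(V_w-Y_{m-1},u)\cong\Pi_1(V_W-Y_{m-1},u)$. It then reaches the complement of $\Delta$ in two moves: removing the extra real-codimension-two set $Y_{m-1}-\Delta$ is surjective on $\Pi_1$, and a commutative square is chased. It absorbs non-properness by citing an isotopy lemma whose exceptional set $K(\psi,\mathcal S)$ contains the asymptotic critical values (algebraic by Kurdyka--Orro--Simon).

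You instead make $\Delta$ itself, and after compactifying also the boundary at infinity, a union of strata. The trivialization then restricts directly to $V_W-\Delta\cong(V_w-\Delta)\times W$. This buys a stronger statement: the inclusion is a homotopy equivalence, so $\iota_\ast$ is an isomorphism rather than just a surjection. It also eliminates the codimension-two surjectivity step, which on possibly singular level sets needs extra care. The price is the compactification bookkeeping, or a citation of Verdier's generic triviality theorem compatible with the subvariety $\Delta$; you set this up correctly.

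You also do not need the trivialization to be the identity on the fibre over $w$. Since it commutes with the projection to $W$, its restriction to $V_w-\Delta$ is already a homeomorphism onto the slice $(V_w-\Delta)\times\{w\}$.
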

\begin{proof} Throughout this proof, we use the Euclidean topology. When discussing an arbitrary variety, we assume that it is embedded in projective space, an affine chart in $\mathbb{C}^n$ is taken, and we use the Euclidean subset topology.  We may also assume that $\psi$ is a polynomial dominant map onto the affine space $\mathbb{C}^{\dim(Z)}$ by composing $\psi$ with a $\dim(Z)$-dimensional generic linear projection.  The inclusion of a generic projection makes the new level sets consist of disjoint unions of level sets of the original $\psi$.  Since all results are local, the results apply to each level set of the disjoint union independently, and it is valid to assume that $\psi$ is dominant.

 Our main goal is to construct a Whitney stratification \cite{Whitney1965}
\[\mathcal S:Y_0 \subseteq Y_1\subseteq \cdots \subseteq Y_{m-1} \subseteq Y_m = Y\] such that $\Delta \subseteq Y_{m-1}$ where $m=\dim Y$.  Then a version of Thom's isotopy lemma, see \cite[Theorem 3.4]{ThomsIsotopyComplexAlgebra21}, applied to $\mathcal S$ and $\psi$  implies that $\psi$ is locally trivial outside of a closed and nowhere dense set $K(\psi,\cS)$, which may be taken to be a proper subvariety.  When $u\in\cU$ is generic, this implies that there exists a Euclidean ball $W$ containing $w\vcentcolon=\psi(u)$, and a homeomorphism $h:V_W\rightarrow V_w\times W$ which commutes with both $\psi$ and the projection onto the second factor of $V_w\times W$.  Moreover, the homeomorphism preserves the strata $S_i=Y_i- Y_{i-1}$.  Therefore, the deformation retraction of $W$ to $w$ extends to $V_w\times W$ and induces a deformation retraction of $V_W$ onto $V_w$ which preserves the strata.

The existence of this deformation retraction implies that the inclusion $\iota:V_w\hookrightarrow V_W$ induces an \emph{isomorphism}
\[
\iota_*\colon \Pi_1(V_w - Y_{m-1},u) \xrightarrow{\cong} \Pi_1(V_W- Y_{m-1},u).
\]
Moreover, since $Y_{m-1}$ is of real codimension 2 in $Y$, $V_w-Y_{m-1}$ and $V_w-\Delta$ differ in codimension at least $2$.  This implies that the inclusion $V_w-Y_{m-1}\hookrightarrow V_w-\Delta$ induces a surjection of fundamental groups.  A similar statement holds when working with $V_W$.  Putting this together, we have the following commutative diagram:
\begin{center}
\begin{tikzcd}
\Pi_1(V_w-Y_{m-1},u) \arrow[d,"\iota_\ast"',"\rotatebox{-90}{$\cong$}"] \arrow[r,two heads] & \Pi_1(V_w-\Delta,u) \arrow[d,"\iota_\ast"']\\
\Pi_1(V_W-Y_{m-1},u) \arrow[r,two heads] & \Pi_1(V_W-\Delta,u).
\end{tikzcd}
\end{center}
By extending by zeros to the right and applying the 4-lemma, we conclude that $\iota_\ast$ on the right is surjective.

We complete the proof by constructing the required Whitney stratification. We adapt the approach in \cite[Section 2]{ThomsIsotopyComplexAlgebra21}.  There, the authors construct a Whitney stratification by finding a (complex) codimension-$1$ subvariety $Y_{m-1}$ which contains the singular points of $Y$.  We change their approach by forcing $Y_{m-1}$ to also include $\Delta$ and the critical values of $\pi$.  We recall that both $\Delta$ and the critical values of $\pi$ are of codimension at least $1$, see, for example, Sard's theorem in \cite[Proposition 14.4]{Har92}.  Since the approach in \cite[Section 2]{ThomsIsotopyComplexAlgebra21} allows $Y_{m-1}$ to be any codimension-$1$ variety containing the singular points in $Y$, there is enough freedom to choose $Y_{m-1}$ as described here.  The remaining steps to construct $Y_i$ for $i<m-1$ of the Whitney stratification are unchanged from \cite[Section 2]{ThomsIsotopyComplexAlgebra21}.  We note that our change to the construction of $Y_{m-1}$ does not preserve the degree bounds of \cite{ThomsIsotopyComplexAlgebra21}, but this is not necessary in our application.

With this new $\cS$ in hand, the computation of $K=K(\psi,\cS)$ proceeds as in \cite{ThomsIsotopyComplexAlgebra21}.  By \cite[Theorem 4.1]{KurdykaOrroSimon2000}, this $K$ is contained in a codimension-1 subvariety of $Z$. 
\end{proof}

\begin{corollary}
\label{cor:same_monodromy_strip}
    Under the assumptions and notation of \Cref{lem:generic_strip_fundamental_equivalence}, the image of $\Pi_1(V_W-\Delta,u)$ under the monodromy homomorphism is $G_{\pi_{V_w},u}$. 
\end{corollary}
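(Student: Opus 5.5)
The corollary follows by showing containment in both directions between the image of $\Pi_1(V_W-\Delta,u)$ under the monodromy homomorphism and $G_{\pi_{V_w},u}$. Throughout, both are regarded as subgroups of $S_{X_u}$.

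First, recall how $G_{\pi_{V_w},u}$ sits inside $S_{X_u}$. By definition it is the image of $\Pi_1(V_w-\Delta',u)$ under monodromy, where $\Delta'$ is the branch locus of $\pi_{V_w}$. Since $u$ is generic, monodromy computed in $V_w-\Delta$ gives the same group. Indeed, $\Delta'\subseteq \Delta\cap V_w$ may be larger than necessary, but removing extra proper subvarieties from the irreducible $V_w$ only passes to a surjection on fundamental groups, as in the proof of \Cref{lem:generic_strip_fundamental_equivalence}. It also does not change the permutation attached to a loop, because lifting is the same for the covering $\pi$ restricted over $V_w-\Delta$. So $G_{\pi_{V_w},u}$ is the image of $\Pi_1(V_w-\Delta,u)$ under $\gamma\mapsto\sigma_\gamma$.

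Next, consider the commutative triangle formed by the inclusion $\iota_\ast:\Pi_1(V_w-\Delta,u)\to\Pi_1(V_W-\Delta,u)$ and the two monodromy maps into $S_{X_u}$. It commutes because a loop in $V_w-\Delta$ lifts to the same $d$ paths whether it is viewed in $V_w$, in $V_W$, or in $\cU$; the permutation $\sigma_\gamma$ depends only on the loop and the covering $\pi^{-1}(\cU)\to\cU$. Hence the image of $\Pi_1(V_W-\Delta,u)$ contains the image of $\iota_\ast(\Pi_1(V_w-\Delta,u))$, which is $G_{\pi_{V_w},u}$.

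For the reverse inclusion, apply \Cref{lem:generic_strip_fundamental_equivalence}: $\iota_\ast$ is surjective. So every class $[\gamma]\in\Pi_1(V_W-\Delta,u)$ equals $\iota_\ast[\gamma']$ for some loop $\gamma'$ in $V_w-\Delta$. Since monodromy is invariant under based homotopy in $V_W-\Delta\subseteq\cU$, we get $\sigma_\gamma=\sigma_{\gamma'}\in G_{\pi_{V_w},u}$. The two images are therefore equal.

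The only delicate point is the identification in the first step. One must check that $G_{\pi_{V_w},u}$, defined intrinsically via the branch locus of $\pi_{V_w}$, agrees with the monodromy of loops in $V_w-\Delta$. This rests on the codimension-two surjectivity argument already used in the lemma; everything else is formal.
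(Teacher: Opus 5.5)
Your proof is correct and matches the paper's: the containment of the image of $\Pi_1(V_W-\Delta,u)$ in $G_{\pi_{V_w},u}$ comes from the surjectivity of $\iota_\ast$ in \Cref{lem:generic_strip_fundamental_equivalence}, and the reverse containment comes from the inclusion $V_w-\Delta\subseteq V_W-\Delta$. Your preliminary step identifying $G_{\pi_{V_w},u}$ with the monodromy image of $\Pi_1(V_w-\Delta,u)$ is one the paper uses implicitly, simply writing $\sigma(\Pi_1(V_w-\Delta,u))=G_{\pi_{V_w},u}$, so spelling it out only adds care.
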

\begin{proof}
Let $\sigma:[\tau]\mapsto \sigma_\tau$ be the monodromy homomorphism of the monodromy action on the fibre $X_u$. By \Cref{lem:generic_strip_fundamental_equivalence}, every element of $\Pi_1(V_W-\Delta,u)$ has a representative in $\Pi_1(V_w-\Delta,u)$, which implies the following subgroup containment \[\sigma(\Pi_1(V_W-\Delta,u)) \leq \sigma(\Pi_1(V_w-\Delta,u))=G_{\pi_{V_w},u}.\] The other containment is immediate since  $V_W-\Delta$ contains $V_w-\Delta$. 
\end{proof}

From the technical topological result of \Cref{lem:generic_strip_fundamental_equivalence}, we prove a key result regarding block systems of restricted monodromy groups: that elements of the fundamental groupoid of $\mathcal U$ must induce maps on block systems over level sets. 

\begin{figure}[!htpb]
\begin{center}
\begin{tikzpicture}[
    scale=1,
    >=stealth,
    floor/.style={fill=gray!8, draw=black, line width=0.6pt},
    curve/.style={draw=black, line width=0.8pt},
    translatecurve/.style={draw=black, dotted, line width=0.8pt},
    deltacurve/.style={draw=gray!70!black, line width=1.1pt},
    point/.style={circle, fill=black, inner sep=1.4pt},
    lift/.style={draw=black, line width=0.6pt},
    box/.style={draw=black, rounded corners=1pt, inner sep=2pt},
    tau/.style={draw=black, line width=0.8pt, ->},
    fiberpath/.style={draw=black, dashed,line width=0.7pt, ->}
]
\coordinate (A) at (0,0);
\coordinate (B) at (7,0);
\coordinate (C) at (8.2,2.6);
\coordinate (D) at (1.2,2.6);
\draw[floor] (A) -- (B) -- (C) -- (D) -- cycle;
\node at (8.9,2.25) {$Y$};
\node at (8.9,4.2) {$X$};
\draw[translatecurve]
    (1.35,0)
    .. controls (1.65,0.85) and (1.75,1.25) .. (2.05,1.65)
    .. controls (2.35,2.05) and (2.55,2.30) .. (2.85,2.6);
\draw[curve]
    (2.25,0)
    .. controls (2.55,0.85) and (2.65,1.25) .. (2.95,1.65)
    .. controls (3.25,2.05) and (3.45,2.30) .. (3.75,2.6)
    node[above] {$V_{w'}$};
\draw[curve]
    (4.25,0)
    .. controls (4.55,0.75) and (4.65,1.15) .. (4.95,1.55)
    .. controls (5.25,1.95) and (5.45,2.20) .. (5.75,2.6)
    node[above] {$V_w$};
\draw[translatecurve]
    (5.15,0)
    .. controls (5.45,0.75) and (5.55,1.15) .. (5.85,1.55)
    .. controls (6.15,1.95) and (6.35,2.20) .. (6.65,2.6);
\draw[deltacurve]
    (0.46,1.05)
    .. controls (2.00,0.45) and (5.65,1.15) ..
    (7.65,1.45)
    node[right] {$\Delta$};
\coordinate (u)  at (4.95,1.55);
\coordinate (up) at (2.95,1.65);
\coordinate (upR) at (3.05,1.65);
\node[point, label=below right:{$u$}] at (u) {};
\node[point, label=above left:{$u'$}] at (up) {};
\draw[->, line width=0.8pt]
    (8.9,3.75) -- node[right] {$\pi$} (8.9,2.85);
\draw[tau]
    (u) .. controls (4.35,1.90) and (3.65,1.45) ..
    node[above] {$\tau$} (upR);
\coordinate (Utop)  at (4.95,3.55);
\coordinate (UPtop) at (2.95,3.65);
\draw[lift] (u) -- (Utop);
\draw[lift] (up) -- (UPtop);
\foreach \y/\lab in {0.00/{B_1},0.50/{B_2},1.5/{B_k}} {
    \draw[box]
        ($(Utop)+(-0.48,0.16+\y)$)
        rectangle
        ($(Utop)+(0.48,0.40+\y)$);
    \foreach \c in {0,1,2,3} {
        \node[point, inner sep=1.15pt]
            at ($(Utop)+(-0.33+0.22*\c,0.28+\y)$) {};
    }
    \node[right] at ($(Utop)+(0.62,0.28+\y)$) {$\lab$};
}
\node[right] at ($(Utop)+(0.75,1.4)$) {$\vdots$};
\node at ($(Utop)+(0,1.4)$) {$\vdots$};
\foreach \y/\lab in {0.00/{B'_1},0.50/{B'_2},1.5/{B'_k}} {
    \draw[box]
        ($(UPtop)+(-0.48,0.16+\y)$)
        rectangle
        ($(UPtop)+(0.48,0.40+\y)$);
    \foreach \c in {0,1,2,3} {
        \node[point, inner sep=1.15pt]
            at ($(UPtop)+(-0.33+0.22*\c,0.28+\y)$) {};
    }
    \node[left] at ($(UPtop)+(-0.62,0.28+\y)$) {$\lab$};
}
\node[left] at ($(UPtop)+(-0.85,1.4)$) {$\vdots$};
\node at ($(UPtop)+(0,1.4)$) {$\vdots$};
\draw[fiberpath]
    ($(Utop)+(-0.48,0.28)$)
    .. controls (4.35,3.95) and (3.75,4.5) ..
    ($(UPtop)+(0.51,0.78)$);
\draw[fiberpath]
    ($(Utop)+(-0.48,0.78)$)
    .. controls (4.35,4.35) and (3.7,3.40) ..
    ($(UPtop)+(0.51,0.28)$);
\draw[fiberpath]
    ($(Utop)+(-0.48,1.52)$)
    .. controls (4.35,4.95) and (3.55,4.95) ..
    ($(UPtop)+(0.51,1.52)$);
\draw[decorate, decoration={brace, mirror, amplitude=6pt}, line width=0.7pt]
    (1.35,-0.35) -- node[below=7pt] {$V_W$} (5.05,-0.35);
\end{tikzpicture}
\end{center}
\caption{An illustration of the claim
of \Cref{lem:blocks_to_blocks_in_leaves}. For paths within a neighborhood $V_W$ of $V_w$, blocks of $G_{\pi_{V_w}}$ map to blocks of $G_{\pi_{V_{w'}}}$ under the bijection $\sigma_{\tau}$ corresponding to conjugation by a path $\tau$ from $u$ to $u'$ in $\mathcal U$. }
    \label{fig:blocks_to_blocks_in_leaves}
\end{figure}

\begin{lemma}
    \label{lem:blocks_to_blocks_in_leaves}
    Suppose $\pi:X\rightarrow Y$ is a branched cover, $\psi: Y \to Z$ is a $\pi$-generic dominant map, and $u \in \mathcal U$ is generic. Suppose that $G_{\pi_u,u}$ contains a transposition and is not $S_{X_u}$.  For $w = \psi(u)$, let $W \subseteq Z$ be a Euclidean ball around $w$ as in \Cref{lem:generic_strip_fundamental_equivalence}.  Suppose that $u,u' \in V_W-\Delta$, $[\tau]\in\Pi_1(\cU\cap V_W,u,u')$, and $w'=\psi(u')$. Then the bijection $\sigma_\tau$ maps the unique minimal blocks of $G_{\pi_{V_w}}$ to the unique minimal blocks of $G_{\pi_{V_{w'}}}$.  
\end{lemma}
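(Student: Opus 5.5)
The plan is to show that conjugation by $\sigma_\tau$ carries the restricted monodromy group at $u$ onto the restricted monodromy group at $u'$. Once that holds, uniqueness of minimal block systems (\Cref{cor:uniqueness}) forces $\sigma_\tau$ to match the two minimal block systems. Note that $u'$ lies in $V_W$, so $w'\in W$. For $u'$ I will use $G_{\pi_{V_{w'}},u'}$. Note that $u'$ need not be generic, so I will not apply \Cref{lem:generic_strip_fundamental_equivalence} at $u'$. Instead, I will work entirely with the thickened group $\Gamma_{u}\vcentcolon=\sigma(\Pi_1(V_W-\Delta,u))$, which equals $G_{\pi_{V_w},u}$ by \Cref{cor:same_monodromy_strip}.

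\textbf{Step 1 (conjugation in the strip).} Since $\tau$ lies in $V_W-\Delta$, the map $\Phi_\tau$ gives an isomorphism $\Pi_1(V_W-\Delta,u)\cong\Pi_1(V_W-\Delta,u')$. Functoriality of lifting, $\sigma_{\tau^{-1}\sqcdot\gamma\sqcdot\tau}=\sigma_\tau\sigma_\gamma\sigma_\tau^{-1}$, then gives $\Gamma_{u'}\vcentcolon=\sigma(\Pi_1(V_W-\Delta,u'))=\sigma_\tau\Gamma_u\sigma_\tau^{-1}$. Consequently $\Gamma_{u'}$ is permutation-isomorphic to $G_{\pi_{V_w},u}$ via $\sigma_\tau$. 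In particular it is transitive, contains a transposition, and is not the full symmetric group. By \Cref{cor:uniformorimprimitive} and \Cref{cor:uniqueness}, its unique minimal block system is the orbit partition of $T(\Gamma_{u'})$. Moreover, $\sigma_\tau$ maps the orbits of $T(\Gamma_u)$, which are the unique minimal blocks $\mathcal B_u$, to the orbits of $T(\Gamma_{u'})$, since conjugation maps transpositions to transpositions.

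\textbf{Step 2 (relating $\Gamma_{u'}$ to $G_{\pi_{V_{w'}},u'}$).} Loops in $V_{w'}-\Delta$ lie in $V_W-\Delta$, so $G_{\pi_{V_{w'}},u'}\le\Gamma_{u'}$. For the reverse containment, I plan to use the local triviality from the proof of \Cref{lem:generic_strip_fundamental_equivalence}. The stratum-preserving homeomorphism $V_W\cong V_w\times W$ also deformation retracts $V_W$ onto $V_{w'}$, for any $w'\in W$, since the ball $W$ is contractible onto $w'$. Running the same commutative diagram with $w'$ in place of $w$, using that $V_{w'}-Y_{m-1}\hookrightarrow V_{w'}-\Delta$ is surjective on $\Pi_1$, shows $\Pi_1(V_{w'}-\Delta,u')\twoheadrightarrow\Pi_1(V_W-\Delta,u')$. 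Hence $G_{\pi_{V_{w'}},u'}=\Gamma_{u'}$. (If $\pi_{V_{w'}}$ failed to be a branched cover, this equality would still define its monodromy as $\Gamma_{u'}$. Transitivity comes for free from conjugacy, so the statement is meaningful.)

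\textbf{Step 3 (conclusion).} Combining the two steps, $\sigma_\tau$ conjugates $G_{\pi_{V_w},u}$ onto $G_{\pi_{V_{w'}},u'}$. By Step 1 it sends the orbits of the transpositional subgroup, that is the unique minimal blocks, to those of the target group. Uniqueness is exactly what makes the statement independent of choices.

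The main obstacle is Step 2: justifying the surjectivity at the possibly non-generic fibre $w'$. This requires that the trivialization over $W$ restricts to a stratum-preserving homeomorphism $V_{w'}\cong V_w$, and that the codimension-two surjection argument works fibrewise at $w'$. Both should follow because $W$ was chosen inside the complement of the bifurcation set $K(\psi,\mathcal S)$, so every fibre over $W$ is equally good.
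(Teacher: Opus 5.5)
Your proposal is correct and follows the paper's proof. The paper also re-runs the stratified trivialization over the same ball $W$ to obtain $\Pi_1(V_{w'}-\Delta,u')\twoheadrightarrow\Pi_1(V_W-\Delta,u')$, and then uses conjugation by $\tau$ in the connected set $V_W-\Delta$, together with \Cref{cor:same_monodromy_strip}, to show that $G_{\pi_{V_w},u}$ and $G_{\pi_{V_{w'}},u'}$ are conjugate via $\sigma_\tau$. The only difference is the last step: the paper argues by contradiction, picking a transposition in the full wreath product at $u'$ whose conjugate back to $u$ would split a block, whereas you transport $T(\cdot)$ and its orbit partition under conjugation by $\sigma_\tau$, which reaches the same conclusion more directly.
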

\begin{proof}
    In this proof, for ease of notation, we write $\pi_u$ for $\pi_{V_{w}}$ and $\pi_{u'}$ for $\pi_{V_{w'}}$.  Since $V_{w}$ and $V_{w'}$ are homeomorphic, $V_{w}\times W$ and $V_{w'}\times W$ are also homeomorphic.  In addition, these homeomorphisms preserve the Whitney stratification of \Cref{lem:generic_strip_fundamental_equivalence}.  Therefore, using the same $W$, the conclusions of \Cref{lem:generic_strip_fundamental_equivalence} also apply for $u'$, that is, the inclusion of spaces induces the surjection $\Pi_1(V_{w'}-\Delta,u')\twoheadrightarrow\Pi_1(V_W-\Delta,u')$.  
    
    Since $V_w$ is irreducible, it follows that $V_w\times W$ is connected, and hence, $V_W$ is also connected.  Since $\Delta$ is of real codimension at least $2$ in $Y$, $V_W-\Delta$ is also connected.  Therefore, $\Pi_1(V_W-\Delta,u)$ and $\Pi_1(V_W-\Delta,u')$ are conjugate via the given path $\tau$.  Then, by \Cref{cor:same_monodromy_strip}, it follows that $G_{\pi_u,u}$ and $G_{\pi_{u'},u'}$ are also conjugate.  Therefore, since $G_{\pi_u,u}$ is transitive and contains a transposition, $G_{\pi_{u'},u'}$ is also transitive and contains a transposition.  Hence, by \Cref{cor:uniformorimprimitive}, both $G_{\pi_u,u}$ and $G_{\pi_{u'},u'}$ are either full symmetric groups or they have unique minimal block systems of the same size.  Since we assumed that $G_{\pi_u,u}$ is not $S_{X_u}$, both $G_{\pi_u,u}$ and $G_{\pi_{u'},u'}$ are imprimitive.

    Let $B$ be a block of the block system $\cB_u$ of $G_{\pi_u,u}$, see \Cref{fig:blocks_to_blocks_in_leaves} for a visualization. Suppose towards a contradiction, that $\sigma_{\tau}(B)$ is not a part of the block system $\mathcal B_{u'}$ of $G_{\pi_{u'},u'}$. Without loss of generality, assume that $x \in \sigma_{\tau}(B)$ belongs to the block $B_1'$ of $\mathcal B_{u'}$ and $x'\in\sigma_\tau(B)$ belongs to a distinct block.     
    Since $G_{\pi_{u'},u'}$ is a full wreath product and the blocks of $\cB_u$ and $\cB_{u'}$ are the same size, there exists a monodromy element $\sigma_{\gamma} \in G_{\pi_{u'},u'}$ which stabilizes $x'$ within its block but moves $x$ to an element $\hat{x}$ not belonging to $\sigma_{\tau}(B)$. Consequently, the monodromy permutation $
\sigma_{\tau^{-1} \sqcdot \gamma \sqcdot \tau} \in G_{\pi}$ 
    sends $B$ to a non-block of $\cB_u$.  By \Cref{lem:generic_strip_fundamental_equivalence}, since $[\tau^{-1} \sqcdot \gamma \sqcdot \tau] \in \Pi_1(V_W-\Delta,u)$,  there exists a representative $\gamma'$ of this class inside $\Pi_1(V_{w}-\Delta,u)$, a contradiction since  $\sigma_{\gamma'}(B)=\sigma_{\tau^{-1} \sqcdot \gamma \sqcdot \tau}(B)$ is not a block of $G_{\pi_u,u}$.
\end{proof}

The following theorem proves the fourth part of \Cref{thm:main_theorem}.

\begin{theorem}\label{thm:preserves_blocks}
    Let $\pi:X\rightarrow Y$ be a branched cover of degree $d$, $\psi: Y \to Z$ a $\pi$-generic dominant map, $u\in\cU$ a generic point, and $V$ the level set of $\psi$ containing $u$.  If $G_{\pi_V,u}$ contains a transposition and is not $S_{X_u}$, then every element of $\Pi_1(\cU,u)$ preserves the unique minimal block system of $G_{\pi_V}$. 
\end{theorem}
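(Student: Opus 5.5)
The plan is a chaining argument along a loop, with \Cref{lem:blocks_to_blocks_in_leaves} supplying the local steps. Fix a loop $\gamma$ in $\cU$ based at $u$. For each generic point $v\in\cU$, let $\cB_v$ be the orbit partition of the transpositional subgroup of $G_{\pi_{V_{\psi(v)}},v}$ on the fibre $X_v$. We want to show that $\sigma_\gamma(\cB_u)=\cB_u$, where $\sigma_\gamma(\cB_u)=\{\sigma_\gamma(B) : B\in\cB_u\}$.

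The first step is to make sure the points we pass through are good. Generic points of $Y$ lie over generic points of $Z$, so there is a proper subvariety $E\subseteq Y$ with $\Delta\subseteq E$ such that every $v\in Y-E$ satisfies the genericity hypotheses of \Cref{lem:generic_strip_fundamental_equivalence} and \Cref{lem:blocks_to_blocks_in_leaves}. Since $E$ has real codimension at least $2$, the set $Y-E$ is connected, and $\Pi_1(Y-E,u)\to\Pi_1(\cU,u)$ is surjective. So we may homotope $\gamma$ inside $\cU$ to a loop in $Y-E$; this does not change $\sigma_\gamma$.

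Next we transport the hypotheses along $\gamma$. For $v\in\gamma$, conjugating by the path from $u$ to $v$ inside a single strip $V_W$ (as in the proof of \Cref{lem:blocks_to_blocks_in_leaves}) shows that $G_{\pi_{V_{\psi(v)}},v}$ again contains a transposition and is not the full symmetric group. Repeating this along the loop, every $v\in\gamma$ has this property. By \Cref{cor:uniformorimprimitive} and \Cref{cor:uniqueness}, each such $\cB_v$ is then the unique minimal block system of $G_{\pi_{V_{\psi(v)}},v}$.

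The core step is a compactness argument. Every $v\in\gamma$ has a Euclidean ball $W_v\ni\psi(v)$ as in \Cref{lem:generic_strip_fundamental_equivalence}, and $\gamma^{-1}(\psi^{-1}(W_v))$ is an open neighbourhood of $\gamma^{-1}(v)$ in $[0,1]$. By the Lebesgue number lemma we choose $0=t_0<t_1<\cdots<t_n=1$ such that each segment $\gamma|_{[t_{i-1},t_i]}$ lies in a single strip $V_{W_i}$. This strip is built around some point $v_i\in\gamma$ whose level set we may use as the base $V_w$, and both $\gamma(t_{i-1})$ and $\gamma(t_i)$ lie in it.

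Now apply \Cref{lem:blocks_to_blocks_in_leaves} on the $i$-th segment. Compose the segment with paths inside $V_{W_i}-\Delta$ joining $v_i$ to its endpoints; this strip is connected, as noted in that proof. Because $\sigma$ respects path composition, we get that $\sigma_{\gamma|_{[t_{i-1},t_i]}}$ maps $\cB_{\gamma(t_{i-1})}$ to $\cB_{\gamma(t_i)}$. For this we also need a symmetric form of the lemma, with both endpoints arbitrary points of the strip rather than one being the centre. That version follows by going through $v_i$ and composing one bijection with the inverse of another.

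Composing the segments, $\sigma_\gamma$ maps $\cB_u$ to $\cB_{\gamma(1)}=\cB_u$. So every element of $\Pi_1(\cU,u)$ preserves the unique minimal block system of $G_{\pi_V}$, which is the claim of \Cref{thm:preserves_blocks}.

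The main obstacle is the bookkeeping in the two previous paragraphs. The ball $W$ in \Cref{lem:generic_strip_fundamental_equivalence} is attached to a particular generic base point, so we need the intermediate points $v_i$ and the segment endpoints to be generic, and we need each segment to fit inside one strip. The first point I would handle with the avoidance of the proper subvariety $E$, as above. The second I would handle with the Lebesgue-number covering of the compact image of $\gamma$.
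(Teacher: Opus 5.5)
Your proposal is correct and is essentially the paper's own argument: perturb the loop off the non-generic locus (the paper avoids $K(\psi,\mathcal S)$, you avoid a proper subvariety $E\supseteq\Delta$), cover it by finitely many strips from \Cref{lem:generic_strip_fundamental_equivalence} by compactness, apply \Cref{lem:blocks_to_blocks_in_leaves} on each piece, and chain the resulting block-to-block bijections back to $\mathcal B_u$. Two steps the paper leaves implicit are made explicit in your version: transporting the transposition and non-symmetric hypotheses to the strip centres, and obtaining the two-sided form of the lemma by routing through $v_i$.
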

\begin{proof}
For ease of notation, we write $\pi_u$ for $\pi_{V_{\psi(u)}}$.  Let $[\tau] \in \Pi_1(\mathcal U,u)$ and $\cB_u$ be the minimal block system of $G_{\pi_u,u}$.  By replacing $\tau$ with a slight perturbation, if necessary, to avoid $K(\psi,\mathcal{S})$ from \Cref{lem:generic_strip_fundamental_equivalence}, if necessary, let $\gamma=\psi\circ\tau$.
One may cover $\gamma$ by the open sets of \Cref{lem:generic_strip_fundamental_equivalence} which, by the compactness of $\gamma$, admit a finite subcover.  We next find a partition of $\gamma$ subordinate to this subcover and apply \Cref{lem:blocks_to_blocks_in_leaves} to each element of this partition.  We conclude that for every $t$, $\tau$ takes the block system of $G_{\pi_u,u}$ to a block system of $G_{\pi_{\tau(t)},\tau(t)}$ of the same size.  Hence, these blocks are preserved over the entire loop and $\{\sigma_\tau(B)\}_{B\in\cB_u}$ is a block system of $G_{\pi_u,u}$. By the uniqueness of the minimal block system on $G_{\pi_u,u}$, these two block systems of the same size must be equal, that is, $\{\sigma_\tau(B)\}_{B\in\cB_u}=\cB_u$, and hence the block system is preserved by $\sigma_\tau$. 
\end{proof}

Putting these results together proves the final statement in \Cref{thm:main_theorem} and establishes the full symmetric group or imprimitive dichotomy in the presence of transpositions.

\begin{theorem}
\label{thm:iff-full-symmetric}
Let $\pi:X\rightarrow Y$ be a branched cover, $\psi:Y\rightarrow Z$ a $\pi$-generic dominant map, and $w\in Z$ be generic.  If $G_{\pi_{V_w}}$ contains a transposition, then $G_\pi$ is isomorphic to the full symmetric group if and only if $G_{\pi_{V_w}}$ is isomorphic to the full symmetric group.
\end{theorem}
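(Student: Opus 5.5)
The plan is to prove the two implications separately, working at a generic base point $u\in\cU$ with $\psi(u)=w$. We identify $G_{\pi_{V_w},u}$ with a subgroup of $G_{\pi,u}\leq S_{X_u}$ via the inclusion $\iota_\ast:\Pi_1(V_w-\Delta,u)\to\Pi_1(\cU,u)$ from \Cref{sec:restrictions}. Since $\psi$ is $\pi$-generic, $\pi_{V_w}$ is a branched cover of degree $d$. Hence both groups act on the same $d$-element fibre $X_u$, and ``isomorphic to the full symmetric group'' may be read as ``equal to $S_{X_u}$''. One small point should be made explicit: an abstract isomorphism between a transitive subgroup of $S_d$ and $S_d$ forces equality by cardinality. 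So the statement is equivalent to $G_{\pi_{V_w},u}=S_{X_u}\iff G_{\pi,u}=S_{X_u}$.

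The forward direction is immediate. If $G_{\pi_{V_w},u}=S_{X_u}$, then the containment $G_{\pi_{V_w},u}\leq G_{\pi,u}\leq S_{X_u}$ forces $G_{\pi,u}=S_{X_u}$.

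For the converse we argue by contraposition. Suppose $G_{\pi_{V_w},u}$ contains a transposition but is not $S_{X_u}$. By \Cref{cor:uniformorimprimitive}, the orbit partition $\cB_u$ of $T(G_{\pi_{V_w},u})$ is a non-trivial block system; its blocks have size $k$ with $1<k<d$. By \Cref{cor:uniqueness}, $\cB_u$ is the unique minimal block system. \Cref{thm:preserves_blocks} then shows that every $\sigma_\tau$ with $[\tau]\in\Pi_1(\cU,u)$ preserves $\cB_u$, so $G_{\pi,u}\leq\textrm{Wr}(\cB_u)$. Since $1<k<d$, $\textrm{Wr}(\cB_u)\cong S_k\wr S_{d/k}$ is a proper subgroup of $S_{X_u}$. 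For example, a transposition swapping an element of one block with an element of another does not preserve $\cB_u$. Hence $G_{\pi,u}\neq S_{X_u}$, which gives the converse.

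The only genuine subtlety is making sure the hypotheses of \Cref{thm:preserves_blocks} are met. That theorem requires $u$ to be generic in $\cU$, whereas the present statement fixes a generic $w\in Z$. We therefore choose $u\in V_w-\Delta$ generic in $Y$; this is possible because for generic $w$ a generic point of the level set avoids the proper subvarieties (such as $K(\psi,\cS)$ and $\Delta$) used in \Cref{lem:generic_strip_fundamental_equivalence}. Beyond this bookkeeping, and the independence of base point up to conjugation discussed in \Cref{sec:MonodromyBranched}, the argument is a direct assembly of the earlier results.
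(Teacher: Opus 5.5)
Your proposal is correct and follows essentially the same route as the paper. The easy direction comes from $G_{\pi_{V_w}}\leq G_\pi$; the other goes by contraposition through \Cref{cor:uniformorimprimitive} (with uniqueness from \Cref{cor:uniqueness}) and \Cref{thm:preserves_blocks}, giving $G_\pi\leq\textrm{Wr}(\cB_u)\cong S_k\wr S_{d/k}\lneq S_d$. Your remarks on choosing a generic $u\in V_w-\Delta$ and on why $\textrm{Wr}(\cB_u)$ is proper simply spell out details the paper leaves implicit.
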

\begin{proof}
For simplicity of notation, let $V\vcentcolon=V_w$.  Since $G_{\pi_V}\subseteq G_\pi$, the backward direction is immediate.  For the forward direction, by \Cref{cor:uniformorimprimitive} $G_{\pi_V}$ is isomorphic to the full wreath product $S_k\wr H$ with $k>1$ for some transitive $H\leq S_b$.  Now, we assume that $G_{\pi_V}$ is not $S_d$, which implies that $1<k<d$.  By \Cref{cor:uniformorimprimitive}, we conclude that $G_{\pi_V}$ is imprimitive with minimal block size $k$.  By \Cref{thm:preserves_blocks}, for any $[\tau]\in\Pi_1(\cU,u)$, $\sigma_\tau$ preserves the block system $\cB_u$.  Hence, $G_\pi\leq \textrm{Wr}(\cB_u)\cong S_k\wr S_b$ where $b=d/k$.  Hence $G_\pi$ is not the full symmetric group.
\end{proof}

\begin{remark}
In the case where the assumptions of \Cref{thm:iff-full-symmetric} hold and the restricted monodromy group for a generic level set is $S_k\wr H$ for some transitive $H\leq S_b$, \Cref{thm:iff-full-symmetric} does not guarantee that the restricted monodromy groups and $G_\pi$ are the same.  In fact, we may only conclude that $S_k\wr H\leq G_\pi\leq S_k\wr S_b$.
\end{remark}

\begin{remark}
A common way in the literature to argue that a monodromy group is the full symmetric group is to show that the monodromy group is 2-transitive and contains a transposition, see \cite[p.111]{Arb85} and \cite{Harris79}.  Our approach from \Cref{thm:iff-full-symmetric} provides a new way to confirm that a monodromy group is the full symmetric group.
\end{remark}

\subsection{Transpositions and ramification}
One of the key conditions in \Cref{thm:iff-full-symmetric} is that the restricted monodromy group contains a transposition. Locally, a transposition occurs when a fibre of $\pi$ over $y \in \Delta$ consists of $d-2$ simple points and one of multiplicity two which is smooth on $X$. In this case, a small loop around $y$ gives a transposition exchanging the two sheets which intersect over $y$. We call a fibre of a degree-$d$ branched cover $\pi\colon X\to Y$ \mydefit{simply ramified} if the fibre $\pi^{-1}(y)$ consists of $d-1$ isolated points, one of which has multiplicity 2.  We note that in this definition, $X$ is not required to be smooth at the point of multiplicity.

\begin{lemma}
    \label{lem:smooth-simplyramified-transp}
    Let $\pi\colon X\to Y$ be a branched cover with branch locus $\Delta$. Let $y\in \Delta$ be such that $\pi$ is simply ramified over $y$, witnessed by $x\in \pi^{-1}(y)$ such that $X$ is smooth at $x$. Then in any open neighborhood of $y$, there exists a loop $\gamma$ which induces a transposition in $G_\pi$.
\end{lemma}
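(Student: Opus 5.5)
The argument is local: we analyze $\pi$ in small Euclidean neighborhoods of $x$ and $y$, and realize the transposition by a small loop around a smooth point of the branch locus.

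\textbf{Step 1: separate the sheets.} Write $\pi^{-1}(y)=\{x,x_3,\dots,x_d\}$, where $x$ has multiplicity $2$ and each $x_i$ is isolated of multiplicity one. Choose a small ball $N$ around $y$ contained in the given open neighborhood, and pairwise disjoint small neighborhoods $N_x, N_3,\dots,N_d$ of the fibre points. Since $\pi$ is finite near the fibre, properness gives, after shrinking $N$, a decomposition
\[
\pi^{-1}(N)=M_x\sqcup M_3\sqcup\cdots\sqcup M_d
\]
with $M_\bullet\subseteq N_\bullet$. For each $i$, the restriction $M_i\to N$ has local degree $1$ and is therefore a homeomorphism; here I expect to use that $X$ is unibranch at $x_i$, or that multiplicity one forces smoothness and unramifiedness, possibly after shrinking. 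The restriction $M_x\to N$ has local degree $2$.

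\textbf{Step 2: local model at $x$.} Since $X$ is smooth at $x$, take local holomorphic coordinates $(s_1,\dots,s_m)$ on $X$ at $x$ and $(t_1,\dots,t_m)$ on $Y$ at $y$. Because the fibre has multiplicity exactly $2$ at $x$, the differential $d\pi_x$ has corank $1$, so $\pi$ has a fold singularity there. Concretely, I would find coordinates in which
\[
\pi(s)=(s_1,\dots,s_{m-1},g(s)),\qquad g(0,\dots,0,s_m)=c\,s_m^2+\cdots,\quad c\neq 0,
\]
by the holomorphic implicit function theorem. A Weierstrass preparation in $s_m$ then puts $M_x\to N$ in the form $(s',s_m)\mapsto (s', s_m^2+a(s')s_m+b(s'))$, and completing the square reduces this to $(s',s_m)\mapsto(s',s_m^2+e(s'))$. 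Off the discriminant $\{t_m=e(t')\}$, which is a smooth hypersurface in $N$, the map $M_x\to N$ is an unramified double cover.

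\textbf{Step 3: the loop.} Choose $y_0\in N$ close to $y$ and off $\Delta$. Let $\gamma$ be the loop that fixes $t'=t'_0$ and circles $t_m$ once around $e(t'_0)$ in a small disc. I must check that $\gamma$ avoids $\Delta$. Since $\Delta\cap N\subseteq \pi(\mathrm{Sing})\cup\{t_m=e(t')\}$, and the other sheets $M_i\to N$ are homeomorphisms, $\Delta\cap N$ is exactly this discriminant hypersurface after shrinking $N$. Lifting $\gamma$: on $M_x$ the two preimages $s_m=\pm\sqrt{t_m-e}$ are exchanged, while on each $M_i$ the lift is a closed loop fixing its point. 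Hence $\sigma_\gamma$ is a transposition. Conjugating by a path from the base point of $G_\pi$ to $y_0$ gives a transposition in $G_\pi$, since conjugates of transpositions are transpositions.

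\textbf{Main obstacle.} The delicate step is the first one: showing that the sheets through the simple points $x_i$ are unramified homeomorphisms over a small $N$, so that $\Delta$ near $y$ consists only of the fold discriminant. The isolated points $x_i$ may be singular points of $X$. I would argue that local degree $1$ of a finite map between varieties of the same dimension onto a normal germ $(Y,y)$ forces a local homeomorphism. If $Y$ is singular at $y$, I would instead work on the smooth part: choose $y_0$ near a smooth point of the fold discriminant where $Y$ is smooth, and shrink $N$ around that point instead. In the worst case, I would avoid identifying $\Delta$ exactly: I would only require that $\gamma$ avoid $\Delta$, using that $\Delta\cap N$ is a proper analytic subset and the discriminant is a hypersurface, so a generic small circle transverse to the fold meets no other component.
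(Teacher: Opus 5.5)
Your argument works when $Y$ is smooth at $y$, after the repairs below, and it takes a genuinely different route from the paper. The paper just cites Harris's local-irreducibility argument. Over a small neighborhood $N$ of $y$, each simple fibre point carries one sheet, while the two sheets near $x$ form a double cover of $N-\Delta$. That double cover is connected, because $X$ is smooth (hence locally irreducible) at $x$, and a connected manifold germ minus a proper analytic subset stays connected. A path in that piece joining the two sheets projects to a loop in $N-\Delta$ that swaps them and fixes everything else. This needs no normal form, no coordinates on $Y$, and no description of $\Delta$ near $y$. Your fold normal form costs all of these. What it buys is an explicit loop, namely a small meridian around a generic point of the smooth hypersurface $\{t_m=e(t')\}$. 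That fits how the lemma is used later, via transverse slices through a codimension-one component of $\Delta$ (\Cref{prop:restriction-smoothness}).

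Within the smooth case, four points need repair, none serious.
(i) Your ``main obstacle'' disappears. If $Y$ is smooth at $y$ and the fibre has length one at $x_i$, then $\mathfrak{m}_y\mathcal O_{X,x_i}=\mathfrak{m}_{x_i}$. By Nakayama, $(X,x_i)\to(Y,y)$ is then a closed embedding of germs of the same dimension into a smooth germ, hence an isomorphism. In any case, Step 3 only needs $M_i$ to carry one sheet over $N-\Delta$.
(ii) $\pi$ is only dominant, not proper, so $\pi^{-1}(N)$ need not decompose as you claim. What you need is the decomposition over $N-\Delta$, and that follows by counting. The finite germ of $\pi$ at $x$ is flat (smooth source and target) of degree equal to the fibre length $2$, and each $x_i$ contributes one preimage by (i). These $d$ nearby preimages exhaust every fibre over $N-\Delta$.
(iii) Weierstrass preparation gives $g=u\cdot(s_m^2+a(s')s_m+b(s'))$ with a unit $u(s)$ that cannot be discarded. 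Use the parametric Morse lemma instead: solve $\partial_{s_m}g(s',\mu(s'))=0$ and write $g=e(s')+z^2$ in a new coordinate $z$.
(iv) You cannot assert that $\Delta\cap N$ is exactly the fold discriminant, since $\Delta$ may have further components through $y$. Your fallback is the right fix. Choose $(t_0',e(t_0'))$ off the other components of $\Delta$; this is possible because they meet the irreducible smooth fold hypersurface germ in a proper analytic subset. Then shrink the circle.

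Finally, drop the fallback for singular $Y$. Without coordinates at $y$ there is no fold discriminant to move along. Moreover, no argument can work in that generality when multiplicity means fibre length and $\pi$ is not proper. For instance, take $X=\mathbb{C}-\{-i\}$, $Y=\mathcal V(u^2-v^3)$ and $\pi(z)=((z^2+1)^3,(z^2+1)^2)$. This has degree $2$, and the fibre over the cusp is the single point $z=i$, smooth on $X$ and of length $2$. Yet for a small neighborhood $N$ of the cusp, $\pi^{-1}(N-\Delta)$ is two punctured discs, each mapping homeomorphically onto $N-\Delta$. So every loop near the cusp acts trivially. The lemma is only applied at smooth points of the base (\Cref{cor:restricted-mon-gp-transposition}), and there your proof, repaired as above, is complete.
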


\begin{proof}
    In \cite[Section II.3, p.698]{Harris79}, Harris proves the existence of a transposition in the monodromy group when $X$ is locally irreducible at the double point of a simply ramified fibre via a small loop around $y$, as in the statement.  Since $X$ is smooth at $x$, $X$ cannot be locally reducible at $x$, so we conclude that $G_\pi$ has a transposition.
\end{proof}

\begin{remark}
We note that a point $x \in \pi^{-1}(y)$ being singular in its fibre does not contradict $x$ being smooth on the incidence variety $X$. Take, for example, the point $x=0$ in the fibre of $\mathcal V(y-x^2) \rightarrow \mathbb{C}_y$ over $y=0$.
\end{remark}

We proceed to  adapt \Cref{lem:smooth-simplyramified-transp} to the restricted case by first showing that smoothness of $x\in X$ in \Cref{lem:smooth-simplyramified-transp} translates to smoothness of $X_V$ when $V$ is a generic level set.  The following is a Bertini-style statement that gives conditions connecting the smoothness on $X$ to that of $X_V$.  We provide a full proof for completeness.

\begin{proposition}
    \label{prop:restriction-smoothness}
    Let $\pi\colon X\to Y$ be a branched cover with branch locus $\Delta\subseteq Y$ and $V\subseteq Y$ be a generic level set of a $\pi$-generic map. Let $y\in V\cap \Delta$ be a generic point of an irreducible component $\Delta_\mu$ of $\Delta$ which is smooth on both $V$ and $Y$. Furthermore, suppose that $\pi$ is simply ramified over $y$, with $x\in\pi^{-1}(y)$ having multiplicity two.  If $X$ is smooth at $x$ and $\Delta_\mu$ is of (complex) codimension $1$ in $Y$, then $X_V$ is also smooth at $x$.
\end{proposition}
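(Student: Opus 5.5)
Because $X$ is smooth at $x$ and $V$ is smooth at $y=\pi(x)$, the plan is a tangent-space (transversality) argument carried out locally at $x$. Write $V=\psi^{-1}(w)$ near $y$ as the zero set of $c=\operatorname{codim}_Y V$ local functions $g_1,\dots,g_c$ whose differentials are independent at $y$. Then $X_V$ is cut out near $x$ by $g_1\circ\pi,\dots,g_c\circ\pi$. By the Jacobian criterion, $X_V$ is smooth at $x$ of the expected dimension $\dim V$ as soon as the differentials $d(g_i\circ\pi)_x = (dg_i)_y\circ d\pi_x$ are linearly independent on $T_xX$. Equivalently, we need
\[
d\pi_x(T_xX) + T_yV = T_yY.
\]
The proof therefore reduces to computing the image of $d\pi_x$ and showing it is transverse to $T_yV$.

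The first step is to identify $\operatorname{im} d\pi_x$ with $T_y\Delta_\mu$. Since $\pi$ is simply ramified over $y$ with $x$ the double point, $\pi$ near $x$ looks like a fold $(s,t_2,\dots,t_m)\mapsto(s^2,t_2,\dots,t_m)$. I would justify this as follows. Since $\dim X=\dim Y=m$ and $x$ is a ramification point, $d\pi_x$ is not surjective. Multiplicity exactly two forces the kernel of $d\pi_x$ to be one-dimensional: a kernel of dimension at least two, or higher-order vanishing, would give local degree larger than $2$, for instance by restricting to a generic curve through $x$. So the rank is $m-1$. Near $x$ the ramification locus $R$ maps onto the branch locus, and because $y$ is a generic point of the codimension-one component $\Delta_\mu$, which is smooth at $y$, the image $\pi(R)$ near $y$ is $\Delta_\mu$. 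Then $d\pi_x(T_xR)\subseteq T_y\Delta_\mu$, and comparing dimensions gives $\operatorname{im} d\pi_x = T_y\Delta_\mu$. If needed I would invoke generic smoothness (the Sard-type statement \cite[Proposition 14.4]{Har92} already used in \Cref{lem:generic_strip_fundamental_equivalence}) applied to $R\to\Delta_\mu$ at the generic point $y$.

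The second step is transversality: $T_y\Delta_\mu + T_yV = T_yY$, which says that $\Delta_\mu$ and the generic level set $V$ meet transversally at the generic point $y$ of $V\cap\Delta_\mu$. I would obtain this by applying generic smoothness to the restriction $\psi|_{\Delta_\mu}\colon \Delta_\mu\to Z$. If $\psi|_{\Delta_\mu}$ is dominant, then for generic $w$ and generic $y$ in the fibre the map $d(\psi|_{\Delta_\mu})_y$ is surjective. Since $T_yV=\ker d\psi_y$, this gives $T_y\Delta_\mu+\ker d\psi_y = T_yY$. If $\psi|_{\Delta_\mu}$ is not dominant, then a generic $V$ misses $\Delta_\mu$, contradicting $y\in V\cap\Delta_\mu$. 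Combining the two steps gives $\operatorname{im} d\pi_x + T_yV = T_yY$, and the Jacobian criterion yields smoothness of $X_V$ at $x$.

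The main obstacle is the first step, namely showing that $\operatorname{im} d\pi_x$ is exactly the hyperplane $T_y\Delta_\mu$, using only "multiplicity two" and smoothness of $X$ at $x$. That is the step where the hypotheses are actually used. The delicate points are ruling out a rank drop greater than one and ensuring the image of the ramification locus is all of $\Delta_\mu$ and not something smaller. I would first try the local-degree argument on a generic curve through $x$, and then use the codimension-one hypothesis on $\Delta_\mu$ to match dimensions.
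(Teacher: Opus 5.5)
Your proposal is correct and follows essentially the same route as the paper. The paper's row-span computation with the Jacobian of the graph embedding (the space $N$, its orthogonal complement $T$, and the rank count for $J_{X'_V}$) is a coordinate version of your criterion $d\pi_x(T_xX)+T_yV=T_yY$. In both arguments, $\operatorname{rank} d\pi_x=\dim X-1$ comes from the double point, $\operatorname{im} d\pi_x=T_y\Delta_\mu$ comes from Sard applied to a component of the ramification locus dominating $\Delta_\mu$ plus a dimension count, and transversality of $V$ and $\Delta_\mu$ comes from Sard applied to $\psi|_{\Delta_\mu}$. Note that the generic-smoothness step you label ``if needed'' is in fact needed, and it is exactly the step the paper uses. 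The containment $d\pi_x(T_xR)\subseteq T_y\Delta_\mu$ alone does not give equality unless you know $d\pi_x|_{T_xR}$ has rank $\dim X-1$, either from that step or from the fold normal form.
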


\begin{proof}
Since $\Delta_\mu$ intersects a generic level set of $\psi$, $\psi|_{\Delta_\mu}$ is a dominant map.  Therefore, by Sard's theorem and a dimension argument, for generic $z\in Z$, $\Delta_\mu$ intersects $\psi^{-1}(z)$ transversely.  Throughout the remainder of this proof, we assume that $X$ and $Y$ are embedded in the projective spaces $\mathbb{P}^m$ and $\mathbb{P}^n$, respectively, and we restrict our attention to affine charts in $\mathbb{C}^m$ and $\mathbb{C}^n$.

We first replace $X$ with an isomorphic copy $X'\subseteq\mathbb{C}^m\times\mathbb{C}^n$, where $X'$ consists of the pairs of points $(x,y)$ with $x\in X$ and $y=\pi(x)\in Y$.  In this case, $\pi':X'\rightarrow Y$ is the projection onto the last $n$ entries of a point of $X'$.  We suppose that $X'$ is given by the equations $\mathcal{H}\vcentcolon=\{h_1,\dots,h_{\ell}\}$, where $\ell\geq m+n-\dim X$.  Since $x$ is a smooth point of $X$, the Jacobian $J_{X'}(x,y)=\begin{pmatrix}\partial_X\mathcal{H}(x,y)&\partial_Y\mathcal{H}(x,y)\end{pmatrix}$ is of rank $m+n-\dim X$.  Here, we use $\partial_X$ to indicate the derivative with respect to the first $m$ variables, that is, those variables associated to $X$, and $\partial_Y$ to indicate the derivative with respect to the last $n$ variables, that is, those variables associated to $Y$.  Since $x$ is a double point of the fibre and fibres are zero-dimensional, the first block $\partial_X\mathcal{H}(x,y)$ of the Jacobian has rank $m-1$.  

Using these ranks, we conclude that the \emph{row span} of the Jacobian intersected with $0^m\times\mathbb{C}^n$ is an $(n-\dim X+1)$-dimensional space, which we define as $0^m\times N$.  For any vector $(0,v)$ in $0^m\times N$, every vector in the tangent space $T_{X'}(x,y)$ must be perpendicular to $(0,v)$.  Since the pushforward of a projection map is still a projection map, this implies that all vectors in $\pi'_\ast(T_{X'}(x,y))$ must be perpendicular to $v$. The space of vectors in $\mathbb{C}^n$ perpendicular to $N$ is a $(\dim X-1)$-dimensional space $T$, which we next show is the tangent space of $\Delta_\mu$ at $y$.

From an alternate perspective, suppose that we construct the system of equations consisting of $\mathcal{H}$ along with those polynomials corresponding to the rank condition on $\partial_X\mathcal{H}$.  This system corresponds to a subvariety $\Lambda$ of $X'$ such that $\pi(\Lambda)\subseteq\Delta$.  Since a generic point of $\Delta_\mu$ is simply ramified and the witness to this ramification is in $\Lambda$, an open subset of $\Delta_\mu$ is contained in $\pi(\Lambda)$.  In fact, there must be an irreducible component $\Lambda_\mu$ of $\Lambda$ that maps dominantly onto $\Delta_\mu$.  Since $y$ is generic in $\Delta_\mu$, it follows that $(x,y)\in\Lambda_\mu$.  Since $y$ is generic, $\Delta_\mu$ is smooth at $y$ and, by Sard's theorem, $\left(\pi'|_{\Lambda_\mu}\right)_\ast$ maps $T_{\Lambda_\mu}(x,y)$ onto $T_{\Delta_\mu}(y)$.  By the argument above, we know that every vector in $\left(\pi'|_{\Lambda_\mu}\right)_\ast\left(T_{\Lambda_\mu}(x,y)\right)$ is perpendicular to the vectors in $N$.  In other words, the tangent space of $\Delta_\mu$ at $y$ is contained in $T$ and is at most $(\dim X-1)$-dimensional.  Since $\Delta_\mu$ is codimension 1 in $Y$, these dimensions match, and we conclude that the tangent space of $\Delta_\mu$ is exactly $T$.

Finally, suppose that $V$ is given by the equations $\mathcal{F}=\{f_1,\dots,f_k\}$.  Since $y$ is a smooth point of $V$, the rank of the Jacobian $J_V(y)=\partial_Y\mathcal{F}(y)$ is $n-\dim V$.  Since $V$ intersects $\Delta_\mu$ transversely at $y$ and $\Delta_\mu$ is of codimension $1$ in $Y$, there is some vector $w\in T_V(y)$ which is not in $T_{\Delta_\mu}(y)$.  We now observe that $X'_V$ is defined by the system of equations $\mathcal{H}\cup \mathcal{F}$, and its Jacobian at $(x,y)$ is
$$
J_{X'_V}(x,y)=\begin{pmatrix}
\partial_X\mathcal{H}(x,y)&\partial_Y\mathcal{H}(x,y)\\
0&\partial_Y\mathcal{F}(y)
\end{pmatrix}.
$$
By the previous dimension calculations, we see that the rank of this Jacobian is $m-1$ plus the dimension of the intersection of the row span with $0^m\times\C^n$.  Since $\partial_Y\mathcal{F}(y)$ has rank $n-\dim V$, the rank of this Jacobian is at least $n+m-\dim V-1$.  With a little more care, we see that the rank of this Jacobian is $m-1$ plus the dimension of $(N$ plus the row span of $\partial_Y\mathcal{F}(y))$.  In other words, the rank of the Jacobian is
\begin{multline*}
(m-1)+\dim N+\operatorname{rank}\partial_Y\mathcal{F}(y)-\dim\left(N\cap\operatorname{row}\operatorname{span}\partial_Y\mathcal{F}(y)\right)\\
=m+2n-\dim X-\dim V-\dim\left(N\cap\operatorname{row}\operatorname{span}\partial_Y\mathcal{F}(y)\right).
\end{multline*}
In the remaining dimension calculation, since both $N$ and $\operatorname{row}\operatorname{span}\partial_Y\mathcal{F}(y)$ correspond to normal spaces of varieties within $Y$, they must both include the $(n-\dim Y)$-dimensional normal space to $Y$.  Therefore,
$$
n-\dim Y\leq \dim\left(N\cap\operatorname{row}\operatorname{span}\partial_Y\mathcal{F}(y)\right)\leq \dim N=n-\dim Y+1.
$$
Hence, the dimension of the intersection of interest is either $n-\dim Y$, when $N$ is not contained in $\operatorname{row}\operatorname{span}\partial_Y\mathcal{F}(y)$, or $n-\dim Y+1$, when this containment holds.  If the containment were to hold, this would contradict the existence of the vector $w$ since then $w$ would need to be perpendicular to $N$, that is, $w$ would be in $T_{\Delta_\mu}(y)$, which is impossible.  Therefore, the dimension of the intersection is $n-\dim Y$, and the rank of the Jacobian is $m+n-\dim V$, that is, $X'_V$ is smooth at $(x,y)$.
\end{proof}

\begin{remark}
We note that since components of $\Delta$ of codimension greater than one do not contribute to the fundamental group of $Y-\Delta$, it suffices to consider the components of $\Delta$ of codimension 1, as in the statement of \Cref{prop:restriction-smoothness}.
\end{remark}

\begin{corollary}
    \label{cor:restricted-mon-gp-transposition}
Under the hypotheses of \Cref{prop:restriction-smoothness}, $G_{\pi_V}$ contains a transposition.
\end{corollary}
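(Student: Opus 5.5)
The plan is to apply \Cref{lem:smooth-simplyramified-transp} to the restricted branched cover $\pi_V\colon X_V\to V$ at the point $y\in V\cap\Delta$. That lemma needs three things: $\pi_V$ is a branched cover, $y$ lies in its branch locus and $\pi_V$ is simply ramified over $y$, and the witness $x$ is a smooth point of $X_V$. The first is given, since $V$ is a generic level set of a $\pi$-generic map. The third is exactly the conclusion of \Cref{prop:restriction-smoothness}.

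It remains to check simple ramification of $\pi_V$ over $y$. The fibre of $\pi_V$ over $y$ is the same set as the fibre of $\pi$ over $y$, namely $d-1$ isolated points, so only the multiplicity of $x$ needs care. Since $\pi_V$ has degree $d$ and the $d-2$ other points are simple, $x$ must carry multiplicity $2$ in the restricted fibre as well. Concretely, near $x$ the $d$ sheets of $\pi_V$ over $V-\Delta$ coincide with those of $\pi$, and exactly two of them converge to $x$ as one approaches $y$ within $V$. Hence $y$ is in the branch locus of $\pi_V$ and $\pi_V$ is simply ramified over $y$, witnessed by $x$.

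\Cref{lem:smooth-simplyramified-transp} then gives, in any neighborhood of $y$ in $V$, a loop in $V-\Delta$ whose monodromy is a transposition. That loop lies in $V-\Delta$, so the transposition lies in $G_{\pi_V}$.

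The main obstacle I expect is the multiplicity bookkeeping: the scheme-theoretic fibre of $X_V$ over $y$ should agree with that of $X$ over $y$. I would argue this either by the degree count above, using that $\pi_V$ is a finite map of degree $d$ near $x$, or directly from the Jacobian computation in \Cref{prop:restriction-smoothness}. The $X$-block there still has rank $m-1$, which shows $x$ is a non-reduced point of length exactly $2$ in the restricted fibre.
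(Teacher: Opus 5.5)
Your proposal is correct and follows the paper's proof: apply \Cref{lem:smooth-simplyramified-transp} to $\pi_V$ at $y$, getting the branched-cover property from $\pi$-genericity, smoothness of $X_V$ at $x$ from \Cref{prop:restriction-smoothness}, and simple ramification from the equality of fibres; the paper attributes this last step to the transversality of $\Delta_\mu$ and $V$ at $y$, whereas your sheet count gives it directly. One caution: the rank-$(m-1)$ Jacobian observation by itself only shows that $x$ is a non-reduced point of the restricted fibre with one-dimensional tangent space, not that its length is exactly $2$, so that length must come from your sheet count or from the restricted fibre being the same scheme as $\pi^{-1}(y)$.
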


\begin{proof}
    Since $\psi$ is $\pi$-generic, $\pi_V:X_V\rightarrow V$ is a branched cover and the fibers $\pi^{-1}(y)$ and $\pi_V^{-1}(y)$ are equal.  Since $\pi$ is simply ramified over $y$ and $\Delta_\mu$ intersects $V$ transversely at $y$, $\pi_V$ is also simply ramified over $y$.  Since $X$ is smooth at $x$, \Cref{prop:restriction-smoothness} implies that $X_V$ is smooth at $x$.  Then, applying \Cref{lem:smooth-simplyramified-transp} to $\pi_V$ at $y$ shows that $G_{\pi_V}$ contains a transposition.
\end{proof}

\begin{example}
    Having a local transposition as a consequence of \Cref{cor:restricted-mon-gp-transposition} is not the only way a monodromy group can contain a transposition. For example, consider the branched cover $\pi:X \to \mathbb{C}$ from the plane curve 
    \[
    X = \mathcal V(10y^6-24y^5+15y^4-x) = \mathcal V(f(x,y))
    \]
    to $\mathbb{C}$ defined by the coordinate projection $\pi(x,y) = x$. This map has a branch locus given by 
    \[
    \mathcal V\left(\textrm{Res}\left(f,\frac{\partial f}{\partial y}; y\right)\right) = \mathcal V(x^3(x-1)^2) =\{0,1\}
    \]
    The local monodromy over $x=0$ has cycle type $(4,1,1)$ and the local monodromy over $x=1$ has cycle type $(3,1,1,1)$. Specifically, the monodromy elements are $\sigma_0 = (3,4,5,6)$ and $\sigma_1 = (1,2,3),$ which generate $S_6$.
\end{example}

\begin{remark}
An alternate way in which the monodromy group of a generic level set $V$ of $\psi$ might contain a transposition is if $V$ is not simply connected. An example of this is the realification of the monodromy of the square root function restricted to the unit circle:
\[
\mathcal V(y_1+\sqrt{-1}y_2-x^2,y_1^2+y_2^2-1) \mapsto \mathbb{S}^1 = \mathcal V(y_1^2+y_2^2-1)
\]
This branched cover has no ramification, but its monodromy group is the full symmetric group.
\end{remark}
 
\subsection{Generalized Trace Tests for Branched Covers}\label{sec:generalizedtrace}
The classical \textit{trace test} is an algorithm in numerical algebraic geometry that determines whether a subset of  a generic zero-dimensional linear slice of an irreducible affine variety $\mathcal M \subseteq \mathbb{C}^n$ is the entire intersection. The test evaluates the \mydefit{trace function}, given by coordinate-wise summation, on a nonempty subset $P$ of a linear intersection $\mathcal M \cap L$,
\begin{align*}
\mydef{\Sigma}: \textrm{PowerSet}_{\textrm{finite}}(\mathbb{C}^n) &\to \mathbb{C}^n \\
P &\mapsto \sum_{p \in P} p
\end{align*}
The test then analytically continues $P$ over a generic path $L_t$ of parallel linear spaces. That is, setting $P_t \subseteq \mathcal M \cap L_t$ to be the set of those continuations, the trace test evaluates $\Sigma(P_t)$ at several $t$-values. The \mydefit{classical trace test} then asserts that $P = \mathcal M \cap L$ if and only if $\Sigma(P_t)$ is a linear function of $t$.

More generally, a \mydefit{trace test} is an algorithm that can test whether a subset of a fibre of a branched cover is the entire fibre. In \cite{STT23}, the classical trace test was extended to the setting of sparse polynomial systems. In that work, the analogue of a pencil of parallel linear spaces is a family of sparse systems where some coefficients are allowed to vary. The authors of \cite{STT23} give sufficient conditions on which sets coefficients to vary induce a sparse trace test. In \Cref{sec:sparse_trace_test}, we combine the general theory of trace tests of branched covers, as developed in this section, with our main result to show that \textit{any} single coefficient suffices to induce a sparse trace test. 

Let $X \subseteq \mathbb{C}^n$ be an affine variety and $\pi:X \to Y$ a branched cover of degree $d$. Let $y^\ast\in Y$ be generic and consider a subset $P \subseteq X_{y^\ast}$ of the fibre of $\pi$ over $y^\ast$.  We recall that for any $[\tau]$ in the universal cover $\Pi_1(\mathcal U,y^\ast,\cdot)$ based at $y^*$, the bijection $\sigma_\tau$ is well-defined.  Thus, it is natural to transport $P$ via a path $\tau$ and compute the trace of the endpoints of those paths. This results in the function $\Xi_{P,\pi}$ (where the $\pi$ is dropped when it is clear from context) defined by
\begin{align}
    \mydef{\Xi_{P}}: \Pi_1(\mathcal U,y^\ast,\cdot) &\to \mathbb{C}^n \\ 
    [\tau] &\mapsto \Sigma(\sigma_{\tau}(P)). \nonumber
\end{align}
We say that $\Xi_{P}$ \mydefit{descends to a function} on $\mathcal U$ if $\Xi_P([\tau]) = \Xi_P([\tau'])$ whenever $\tau(1)=\tau'(1)$. In other words, the map $\Xi_P$ factors through the map $\mydef{\textrm{ev}_1}:\Pi_1(\mathcal U,y^{\ast},\cdot) \to \mathcal U$ defined by $\textrm{ev}_1([\tau]) = \tau(1)$ such that $\Xi_P = \xi_P \circ \textrm{ev}_1$ for some $\mydef{\xi_P}: \mathcal U \to \mathbb{C}^n$. This is summarized in the following commutative diagram 
\begin{center}
\begin{tikzpicture}[
    >=stealth,
    node distance=2.8cm and 4.8cm
]
\node (A) {$\Pi_1(\mathcal U,y^\ast,\cdot)$};
\node (B) [right=of A] {$\mathbb C^n$};
\node (C) [below=of A] {$\mathcal U$};
\draw[->] (A) -- node[above] {$\Xi_P$} (B);
\draw[->] (A) -- node[left] {$\operatorname{ev}_1$} (C);
\draw[->,dotted] (C) -- node[below right] {$\xi_P$} (B);
\node (a) [below right=0.1cm and -0.5cm of A] {$[\tau]$};
\node (b) [below left =0.1cm and 1.2cm of B] {$\Sigma\left(\sigma_\tau(P)\right)$};
\node (c) [above right =0.3cm and 0.1cm of C] {$\tau(1)$};
\draw[|->] (a) -- (b);
\draw[|->] (a) -- (c);
\draw[|->,dotted] (c) -- (b);
\end{tikzpicture}
\end{center}
If $\Xi_P$ descends to a function $\xi_P$ on $\mathcal U$, this implies that the coordinates functions of  $\xi_P$ belong to the \textit{function field} $\mathbb{C}(Y)$. In many practical settings $Y = \mathbb{C}^k$, so $\xi_P$ is an $n$-tuple of rational functions in $k$ variables. 

We now succinctly state the classical trace test. Let $\mathcal M \subseteq \mathbb{C}^n$ be an irreducible variety of codimension $k$, and let $\textrm{Gr}(k+1,n+1)$ be the Grassmannian of $(k+1)$-planes in $\mathbb{C}^{n+1}$. We interpret a generic element of $\textrm{Gr}(k+1,n+1)$ as an affine $k$-plane in $\mathbb{C}^n$ used to slice $\mathcal M$ and consider the branched cover 
\[
\pi: \mydef{W(\mathcal M)} \vcentcolon= \{(L,p) \in \textrm{Gr}(k+1,n+1) \times \mathcal M \mid p \in L\} \longrightarrow \textrm{Gr}(k+1,n+1)
\]
whose fibre over a generic linear space $L$ is identified with the $\textrm{deg}(\mathcal M)$-many  points of $\mathcal M \cap L$. We call $\pi$ the \mydefit{witness cover} associated to $\mathcal M$, since the datum of a fibre represents a \textit{witness set} in numerical algebraic geometry \cite{NAG:SommeseWampler}. Let $\psi: \textrm{Gr}(k+1,n+1) \dashrightarrow \textrm{Gr}(k,n)$ be the map which shifts an affine linear space to the origin. We observe that $\psi$ is $\pi$-generic and that a generic level set $V$ of $\psi$ is a family of parallel $k$-planes.  When restricting $\Xi_{P,\pi}$ to $\Xi_{P,\pi_V}$, we only consider paths in $\mathcal{U}\cap V$, that is, 
$$
\Xi_{P,\pi_V}:\Pi_1(\mathcal{U}\cap V,y^\ast,\cdot)\rightarrow\bC^n.
$$
We now give the statement of the classical trace test in our terminology. 

\begin{proposition}[The (classical) trace test]
\label{prop:classicaltracetest}
    Let $\mathcal M$ be an irreducible variety, $\pi$ its witness cover, $V$ a generic level set of $\psi$, as defined above, and $P$  a nonempty subset of a generic fibre $W(\mathcal M)_{L}$ of $\pi_V$. Then the following are equivalent
     \begin{itemize}
         \item $P = W(\mathcal M)_L$ 
         \item  $\Xi_{P, \pi_V}$ descends to a function on $\mathcal U \cap V$
         \item The closure of the image of $\Xi_{P,\pi_V}$ is an affine-linear subspace of $\mathbb C^n$ of dimension $\dim(V)$.
     \end{itemize}
\end{proposition}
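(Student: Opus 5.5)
We prove the cycle of implications (i)$\Rightarrow$(iii)$\Rightarrow$(ii)$\Rightarrow$(i), writing $d=\deg(\mathcal M)$ and parametrizing the generic level set $V$ by translation vectors.

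\textbf{(i)$\Rightarrow$(iii).} Fix the direction space $L_0\in\textrm{Gr}(k,n)$. Then $V$ is identified with $\mathbb C^n/L_0\cong\mathbb C^{k}$ (when $\dim\mathcal M=n-k$, $\dim V=k$), via $c\mapsto L_c=c+L_0$. If $P$ is the full fibre, then $\sigma_\tau(P)=\mathcal M\cap L_{\tau(1)}$ for every path $\tau$. Hence $\Xi_P$ descends to $\xi_P(c)=\Sigma(\mathcal M\cap L_c)$, which is a symmetric function of the roots and therefore rational in $c$. To see that $\xi_P$ is affine-linear, we use the standard argument: choose coordinates with $L_0=\{x_1=\dots=x_{n-k}=0\}^{\perp}$-type splitting. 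The sum of the roots is then a polynomial in $c$, since $\mathcal M$ has no points at infinity in the direction of $L_0$ for generic $L_0$, so no root escapes to infinity. Its degree is bounded by a projective-degree count; concretely, the sum of the $x_j$-coordinates is the negative ratio of the top two coefficients of an elimination polynomial, and homogenization shows this ratio is linear in $c$. Finally, $\xi_P(c)$ projects onto $L_0^\perp$ as $d\cdot c$. So $\xi_P$ is an injective affine map, and its image is an affine subspace of dimension $\dim V$.

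\textbf{(iii)$\Rightarrow$(ii).} Write $P=\{p_1,\dots,p_r\}$. The component of $\Xi_P$ transverse to $L_0$ is $r\cdot c$, which always descends. Suppose $\Xi_P$ failed to descend. Then the analytic continuations of the local branches of $\Xi_P$ give at least two distinct analytic functions of $c$, both with graphs inside the image. The closure of the image is an irreducible affine-linear space $A$ of dimension $\dim V$, and $A$ projects isomorphically onto the $c$-coordinates via $c=\text{proj}/r$. So $A$ is the graph of a single function, and every local branch must agree with it. Hence $\Xi_P$ descends.

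\textbf{(ii)$\Rightarrow$(i).} This is the key step and where the group theory enters. Suppose $P\subsetneq W(\mathcal M)_L$ is nonempty and $\Xi_P$ descends. Then $\Xi_P$ is invariant under the monodromy group $G_{\pi_V}$, in the sense that $\Sigma(g(P))=\Sigma(P)$ for all $g\in G_{\pi_V}$, and likewise at every generic base point. We use the known fact that $G_{\pi_V}$ is the full symmetric group, either by the classical result that a generic pencil of parallel slices already has full symmetric monodromy, or via \Cref{thm:iff-full-symmetric}, since $G_\pi=S_d$ and a generic simple tangency gives a transposition by \Cref{cor:restricted-mon-gp-transposition}. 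Pick $p\in P$ and $q\notin P$. The transposition $(p,q)\in G_{\pi_V}$ yields $\Sigma(P)-p+q=\Sigma(P)$, so $p=q$. Doing this at a generic $L$ gives a contradiction, because distinct fibre points differ.

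\textbf{Main obstacle.} The delicate point is the linearity in (i)$\Rightarrow$(iii), namely bounding the degree of the trace polynomial. We plan to establish it through the elimination/homogenization argument above, using genericity of $L_0$ so that $\mathcal M$ has no points at infinity in that direction.
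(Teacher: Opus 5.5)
Your proposal is correct. Its core step, (ii)$\Rightarrow$(i), is the argument the paper has in mind: descent forces $\Sigma(g(P))=\Sigma(P)$ for all $g\in G_{\pi_V}$ (the easy half of \Cref{prop:generalizedtracetest}). Since $G_{\pi_V}=S_d$, a transposition $(p,q)$ with $p\in P$, $q\notin P$ then forces $p=q$ (\Cref{lem:sufficient_for_no_TIS}).

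Where you differ is in how the third bullet is handled.
\begin{itemize}
\item The paper only sketches (i)$\Leftrightarrow$(ii), where (i)$\Rightarrow$(ii) is trivial because $\sigma_\tau$ of the full fibre is the full fibre over $\tau(1)$. It treats linearity as a classical fact from the literature, needed only to make the test effective. It obtains $G_{\pi_V}=S_d$ from full symmetric monodromy of the witness cover together with a Zariski-type statement that generic restriction preserves monodromy.
\item You instead close a cycle through (iii). For (i)$\Rightarrow$(iii) you sketch the classical proof of linearity: the projection along a generic $L_0$ is finite, and the characteristic polynomial of a coordinate is monic with total degree $d$.
\item Your proof of (iii)$\Rightarrow$(ii) does not appear in the paper. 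The component of $\Xi_P([\tau])$ transverse to $L_0$ is $r\cdot\tau(1)$. So the affine space $A$ surjects onto $\mathbb C^n/L_0$, because its image is closed and contains the dense set $r(\mathcal U\cap V)$; make that surjectivity explicit. Since $\dim A=\dim V$, this projection is a bijection, which forces descent.
\item Your second route to $G_{\pi_V}=S_d$, via \Cref{thm:iff-full-symmetric} and \Cref{cor:restricted-mon-gp-transposition}, fits the spirit of the paper. If you use it, you should verify the hypotheses of \Cref{prop:restriction-smoothness}:
\begin{itemize}
\item $X_V\cong\mathcal M$, so $\psi$ is $\pi$-generic;
\item $W(\mathcal M)$ is smooth over smooth points of $\mathcal M$;
\item for $d\ge 2$ the simple-tangency locus is a codimension-one component of $\Delta$ that meets a generic $V$.
\end{itemize}
\end{itemize}

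There is also a dimension slip to fix. With $L_0\in\textrm{Gr}(k,n)$ and $\mathcal M$ of codimension $k$, one has $V\cong\mathbb C^n/L_0\cong\mathbb C^{n-k}$. Hence $\dim V=n-k=\dim\mathcal M$, not $k$. In adapted coordinates, $L_0=\{x_1=\dots=x_{n-k}=0\}$ itself, not its complement. Your arguments only use $\dim V=\dim(\mathbb C^n/L_0)$, so nothing breaks.
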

In practice, one further restricts to a generic \textit{pencil} of $k$-planes inside such a level set, parametrized by $t \mapsto V_t$. In that setting, the conclusion of \Cref{prop:classicaltracetest} is often phrased as
$$
P_t = \mathcal M \cap V_t\textit{ if and only if }
\Sigma(P_t)\textit{ is an affine-linear function of }t.
$$

We now  generalize \Cref{prop:classicaltracetest} for a general branched cover of the form $\pi :X \to Y$ for $X \subseteq \mathbb{C}^n$. For a subset $P\subseteq X_{y^\ast}$, consider the traces of the
elements of its orbit under the $G_{\pi,y^\ast}$-action:
\[
    \Sigma(G_{\pi,y^\ast} P)
    =
    \{\Sigma(g(P)) \mid g\in G_{\pi,y^\ast}\}.
\]
We say that $P$ is \mydefit{trace-invariant} if
\begin{equation}
    \label{eq:traceinvariance}
    \Sigma(g(P))=\Sigma(P)
    \qquad
    \text{ for all } g\in G_{\pi,y^\ast}.
\end{equation}
We say that $\pi$ admits a \mydefit{(proper) trace-invariant subset}
if, for generic $y^\ast\in\mathcal U$, there exists a nonempty proper
subset $P\varsubsetneq X_{y^\ast}$ which is trace-invariant.

\begin{proposition}[Generalized Trace Test]
\label{prop:generalizedtracetest}
    Let $\pi:X\to Y$ be a branched cover of affine varieties,
    let $y^\ast\in\mathcal U$ be generic, and let
    $P\subseteq X_{y^\ast}$. Then
    \[
        \Xi_P \text{ descends to }\mathcal U
        \iff
        P \text{ is trace-invariant}.
    \]
    In particular, if $\pi$ admits no proper trace-invariant subsets, then for $
    \emptyset \neq P \subseteq X_{y^\ast}$
    \[ \Xi_P \text{ descends to }\mathcal U \iff P = X_{y^\ast}.
    \]
\end{proposition}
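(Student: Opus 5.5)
The plan is to make the descent condition explicit through the universal cover and then compare it with the definition of trace-invariance \eqref{eq:traceinvariance}. Take two classes $[\tau],[\tau']\in\Pi_1(\mathcal U,y^\ast,\cdot)$ with the same endpoint $u'=\tau(1)=\tau'(1)$. Then $\gamma\vcentcolon=\tau\sqcdot\tau'^{-1}$ is a loop based at $y^\ast$. By functoriality of $\tau\mapsto\sigma_\tau$ on the fundamental groupoid, $\sigma_{\tau}=\sigma_{\tau'}\circ\sigma_\gamma$, with composition read in the order determined by the concatenation convention. Consequently
\[
\Xi_P([\tau])=\Sigma(\sigma_{\tau'}(\sigma_\gamma(P)))=\Xi_{\sigma_\gamma(P)}([\tau']).
\]
Conversely, every loop $\gamma$ at $y^\ast$ and every $[\tau']$ produce such a pair, namely $[\tau]=[\gamma\sqcdot\tau']$. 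Descent is therefore equivalent to
\[
\Xi_{g(P)}([\tau'])=\Xi_P([\tau'])\quad\text{for all } g\in G_{\pi,y^\ast} \text{ and all } [\tau'],
\]
because every $g$ equals $\sigma_\gamma$ for some loop $\gamma$.

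For ($\Rightarrow$), specialize to the constant path $\tau'$ at $y^\ast$. Then $\sigma_{\tau'}$ is the identity, and the displayed condition reads $\Sigma(g(P))=\Sigma(P)$ for all $g$, which is exactly \eqref{eq:traceinvariance}.

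For ($\Leftarrow$), the assumption only gives the condition at the base point, so it must be propagated along paths; I expect this to be the main subtle step. Fix $g$ and consider the function
\[
F([\tau'])=\Xi_{g(P)}([\tau'])-\Xi_P([\tau']).
\]
Along a path, the sheets $\sigma_{\tau'_s}$ vary continuously, since they are the lifts. Hence $F$ is a locally constant, in fact analytic, function on the connected universal cover: locally over a trivializing disc it is a difference of sums of local analytic sections of the covering, which are holomorphic coordinates. It is clearer to argue pointwise along $s\mapsto\tau'|_{[0,s]}$. The difference of the traces of $g(P)$ and of $P$ transported along the path is continuous in $s$, vanishes at $s=0$, and is a difference of sums of the coordinates of the lifted points.

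Continuity alone does not force this to stay zero, so a rigidity argument is needed. One option is identity-theorem rigidity: the trace of a set of sheets transported over a simply connected neighbourhood is a holomorphic function there. I would, however, rather use the cleaner group-theoretic route. If $\Sigma(g(P))=\Sigma(P)$ holds at the generic point $y^\ast$ for every $g$, the same argument applies at every generic point. The monodromy group at $u'$ is $\sigma_{\tau'}G_{\pi,y^\ast}\sigma_{\tau'}^{-1}$, so the set $\sigma_{\tau'}(P)$ is invariant under $G_{\pi,u'}$ in the same sense. The reason trace-invariance transfers is that the equalities $\Sigma(g(P))=\Sigma(P)$ are identities among the finitely many holomorphic functions $x_i(u)$ near $y^\ast$. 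Since $y^\ast$ is generic, these identities hold on an open set, hence on all of the universal cover by analytic continuation and the identity theorem.

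This step, justifying that the identity at a single generic point propagates, is where I would lean on genericity. The polynomial relation among the local sheet coordinates that fails only on a proper subvariety is the substance of the generic point assumption. Finally, the ``in particular'' clause is immediate. Taking $P=X_{y^\ast}$ is trivially trace-invariant, since $g(X_{y^\ast})=X_{y^\ast}$, and if no proper nonempty subset is trace-invariant, descent forces $P=X_{y^\ast}$.
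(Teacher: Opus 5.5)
Your reformulation is correct: $\Xi_P$ descends if and only if $\Xi_{g(P)}([\tau'])=\Xi_P([\tau'])$ for all $g\in G_{\pi,y^\ast}$ and all $[\tau']$. Your forward direction and the ``in particular'' clause are also correct and match the paper; its final $\tau_1,\tau_2$ computation is the same observation.

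The gap is in ($\Leftarrow$), at exactly the step you flag. The sentence ``since $y^\ast$ is generic, these identities hold on an open set'' is the entire content of the converse. You name it but do not establish it. The sheet coordinates $x_i(u)$ are only local branches, and monodromy permutes them. So a relation $\sum_{i\in Q}x_i=\sum_{i\in Q'}x_i$ is not a polynomial condition on $Y$. Nothing you write shows that the set of base points where it holds, without holding identically, lies in a proper Zariski-closed subset that a generic $y^\ast$ can be assumed to avoid.

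The missing ingredient is a finite algebraic cover on which the labelled sheets become regular functions. The paper uses the component $X_P$ of the fibre power $X^{(d)}$ through $(x_1^\ast,\dots,x_d^\ast;y^\ast)$ in \Cref{lem:coincidencealgebraicsubset}. There each difference $\sum_{i\le k}x_i-\sum_{i\le k}x_{\sigma(i)}$ is a regular function on the irreducible variety $X_P$. It either vanishes identically, or its zero set is a proper closed subset whose image under the finite map $\pi_P$ lies in a proper closed subset of $Y$. Choosing $y^\ast$ outside these finitely many images gives your open-set claim. In fact it gives directly that the relation holds at every point of $X_P$, i.e.\ after every transport, which is $F\equiv 0$ for each $g$. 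The paper phrases this as Zariski-closedness of $TI_P$.

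Two intermediate claims should also be removed.
\begin{itemize}
\item $F$ is not locally constant; it is holomorphic on the universal cover. If it were locally constant, $F([\mathrm{const}])=0$ would give descent at \emph{every} base point with no genericity. That fails in general, because coincidental equalities $\Sigma(g(P))=\Sigma(P)$ do occur over special points of $\mathcal U$.
\item The ``group-theoretic route'' is circular. Conjugation only gives $\Sigma\bigl(\sigma_{\tau'}g\sigma_{\tau'}^{-1}(\sigma_{\tau'}(P))\bigr)=\Sigma(\sigma_{\tau'}(g(P)))$. Equating this with $\Sigma(\sigma_{\tau'}(P))$ is precisely $F([\tau'])=0$, the statement to be proved.
\end{itemize}
Once the algebraicity step is supplied, your identity-theorem argument on the connected universal cover is a legitimate way to finish. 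It becomes redundant, though, since irreducibility of $X_P$ already yields the global identity.
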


\begin{proof}
    Suppose first that $\Xi_P$ descends to $\mathcal U$. For every loop
    $\gamma$ based at $y^\ast$, $\gamma$ and the constant path $\iota$ at $y^\ast$ have the same endpoint.  Hence
    \[
        \Sigma\left(\sigma_\gamma(P)\right)
        =
        \Xi_P([\gamma])
        =
        \xi_P(y^\ast)
        =
        \Xi_P([\iota])
        =
        \Sigma(P).
    \]
    Since every $g\in G_{\pi,y^\ast}$ is induced by such a loop, it
    follows that
    \[
        \Sigma(g(P))=\Sigma(P)
        \qquad
        \forall g\in G_{\pi,y^\ast},
    \]
    so $P$ is trace-invariant.

Conversely, suppose that $P$ is trace-invariant. Then
$\Sigma\left(\sigma_\gamma(P)\right)=\Sigma(P)
$
for every loop $\gamma$ based at $y^\ast$.  
Since $y^\ast$ is generic,
 the equality $\Sigma(\sigma_{\gamma}(P))=\Sigma(P)$ of traces cannot be an isolated coincidence, and we provide a proof that this is an algebraic condition in \Cref{lem:coincidencealgebraicsubset}.  In particular, for any subset $\emptyset\not=Q\subsetneq X_{y^\ast}$, the set of $y\in\cU$ so that some translate $\sigma_\tau(Q)$ is trace-invariant forms an algebraic set.  Since there are finitely many subsets of $X_{y^\ast}$, we conclude that for $P$ to be trace-invariant at a generic point of $\cU$, every point in $\cU$ must have a translate of $P$ which is trace-invariant.

Now, suppose that $[\tau_1],[\tau_2]\in\Pi_1(\cU,y^\ast,y')$ such that $\sigma_{\tau_1}(P)$ is trace-invariant.  Then 
$$
\Sigma(\sigma_{\tau_2}(P))=\Sigma(\sigma_{\tau_2}(\sigma_{(\tau_1)^{-1}}(\sigma_{\tau_1}(P))))=\Sigma(\sigma_{(\tau_1)^{-1}\sqcdot\tau_2}(\sigma_{\tau_1}(P))).
$$
Since $(\tau_1)^{-1}\sqcdot\tau_2$ is a loop at $y'$ and $\sigma_{\tau_1}(P)$ is trace-invariant, this simplifies to $\Sigma(\sigma_{\tau_1}(P))$.  This, in turn, implies that $\Xi_P$ is independent of path and descends to a function on $\cU$.

Finally, if $\pi$ admits no proper nonempty trace-invariant subsets,
then only trivial trace-invariant subsets can make $\Xi_P$ descend.
Since $P$ is nonempty, it must be the entire fibre.
\end{proof}

The monodromy group $G_{\pi}$ gives some information about whether $\pi$ admits trace-invariant subsets. For example, if $G_{\pi}$ is the full symmetric group, then one can always swap one element in a proper non-empty $P$ with 
something outside of $P$ while stabilizing the remaining $|P|-1$ points set-wise. Such an action changes the trace and so $P$ cannot be trace-invariant, cf. \cite[Proposition~15]{Ley18}. 

Based on the machinery above, the classical trace test equivalence \[P = W(\mathcal M)_L \iff \Xi_{P,\pi_V} \text{ descends to a function on }\cU\cap V\] becomes easy to prove.  It is a consequence of the fact that the monodromy of a witness cover is the full symmetric group along with the fact that restricting a branched cover to a generic line in the base space retains the monodromy group. The remaining equivalence to linearity serves only as a method for making the algorithm effective since it is simple to check if a function is linear. In the generalization which follows, no assertion about the degree of the trace is required. However, $\Xi$ descending implies that the trace is a function of the base of the branched cover, and so bounding the degree of the relevant rational functions and interpolating would make a generalized trace test effective, see \Cref{sec:completionsparsetrace} for this approach in the sparse case. 

The property of being able to swap out singletons from a proper nonempty subset is not unique to the symmetric group. The alternating group, for example, has this property as well. Thus, we generalize and say that a permutation group $G \leq S_d$ is \mydefit{one-swappable} if for every proper nonempty subset $P$  of $[d]$, we have that there exists $g \in G$ such that $|P \cap g(P)|=|P|-1$. See \Cref{rem:one-swappable-groups} for a list of one-swappable groups which are not the symmetric or alternating groups. In addition to one-swappability, being $2$-transitive is sufficient to forbid proper trace-invariant subsets. 
\begin{lemma}
    \label{lem:sufficient_for_no_TIS}
    \hspace{-3pt}If $G_{\pi}$ \hspace{-5pt} is one-swappable or  $2$-transitive then $\pi$ admits no proper trace-invariant~subset.
\end{lemma}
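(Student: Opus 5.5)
The plan is to turn trace-invariance into a linear condition on the points of the fibre and then show that either group-theoretic hypothesis forces two distinct fibre points to coincide. Fix a generic $y^\ast\in\mathcal U$ and label the fibre $X_{y^\ast}=\{x_1,\dots,x_d\}\subseteq\mathbb C^n$. Since $y^\ast\in\mathcal U$ lies over the locus where $\pi$ is a $d$-sheeted covering, the $x_i$ are pairwise distinct. This distinctness is the only geometric input the argument needs. Suppose, for contradiction, that $\emptyset\neq P\subsetneq X_{y^\ast}$ is trace-invariant, that is, $\Sigma(g(P))=\Sigma(P)$ for all $g\in G_{\pi,y^\ast}$.

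\emph{One-swappable case.} By definition there is $g\in G_{\pi,y^\ast}$ with $|P\cap g(P)|=|P|-1$. Since $|g(P)|=|P|$, this means $g(P)=(P-\{p\})\cup\{q\}$ for some $p\in P$ and some $q\notin P$. Then
\[
\Sigma(g(P))-\Sigma(P)=q-p .
\]
This is nonzero because $p\neq q$ are distinct points of the fibre. That contradicts trace-invariance.

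\emph{$2$-transitive case.} I would encode subsets by indicator vectors. Let $M$ be the $n\times d$ matrix whose columns are the $x_i$, and let $\mathbf 1_P\in\mathbb C^d$ be the indicator vector of $P$. Then $\Sigma(g(P))=M\,g\mathbf 1_P$, where $G$ acts on $\mathbb C^d$ by permutation matrices. Trace-invariance then says that $M$ annihilates the subspace
\[
U=\operatorname{span}\{g\mathbf 1_P-\mathbf 1_P : g\in G\}.
\]
The steps are as follows.
\begin{enumerate}
\item $U$ is $G$-stable. This follows from the identity $h(g\mathbf 1_P-\mathbf 1_P)=(hg\mathbf 1_P-\mathbf 1_P)-(h\mathbf 1_P-\mathbf 1_P)$.
\item $U$ lies in the sum-zero hyperplane $\mathbb C^d_0=\{v:\sum_i v_i=0\}$, since each $g\mathbf 1_P$ has the same coordinate sum $|P|$.
\item $U\neq 0$. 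Because $G$ is transitive and $P$ is proper and nonempty, some $g$ moves a point of $P$ outside $P$, so $g\mathbf 1_P\neq\mathbf 1_P$.
\item The classical fact that a $2$-transitive group has permutation character $1+\chi$ with $\chi$ irreducible gives that $\mathbb C^d_0$ is an irreducible $G$-module. Hence $U=\mathbb C^d_0$.
\end{enumerate}
In particular $e_i-e_j\in U$ for all $i,j$, so $M(e_i-e_j)=x_i-x_j=0$, which contradicts the distinctness of the fibre points.

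The only step that is not bookkeeping is step 4, the irreducibility of the sum-zero module. If one prefers to avoid character theory, I would instead use that $\dim\operatorname{End}_G(\mathbb C^d)$ equals the number of $G$-orbits on $[d]\times[d]$, which is $2$ for a $2$-transitive group; this yields the same decomposition $\mathbb C^d=\mathbb C\mathbf 1\oplus\mathbb C^d_0$ with both summands irreducible. Either case produces a contradiction, so $\pi$ admits no proper trace-invariant subset.
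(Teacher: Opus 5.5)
Your proof is correct. The one-swappable case coincides with the paper's argument: the swap changes the trace by $q-p\neq 0$.

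In the $2$-transitive case you take a different route. You cite the irreducibility of the sum-zero module $\mathbb C^d_0$ for a $2$-transitive group, so the nonzero $G$-stable space $U$ annihilated by $M$ must be all of $\mathbb C^d_0$. The paper instead double counts. With $\mathcal O$ the orbit of $P$ and $r=|P|$, every point lies in $\lambda_1$ members of $\mathcal O$ and every pair in $\lambda_2$ members. The identities $d\lambda_1=r|\mathcal O|$ and $\frac{d(d-1)}{2}\lambda_2=\frac{r(r-1)}{2}|\mathcal O|$ force $\lambda_1\neq\lambda_2$. Summing $\Sigma(Q)=\Sigma(P)$ over the members $Q\ni x_i$ then gives $\lambda_1\Sigma(P)=(\lambda_1-\lambda_2)x_i+\lambda_2\Sigma(X_{y^\ast})$, so all $x_i$ coincide.

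In your language, this hand-computes that the $G$-invariant matrix $\sum_{Q\in\mathcal O}\mathbf 1_Q\mathbf 1_Q^{T}$ equals $(\lambda_1-\lambda_2)I+\lambda_2J$, with $J$ the all-ones matrix. That is exactly your fallback fact that $\operatorname{End}_G(\mathbb C^d)$ is spanned by $I$ and $J$, applied to one explicit element.

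The paper's version is self-contained and avoids Maschke, Schur, and character theory. Yours is shorter once the classical fact is granted. It also unifies both cases under one criterion: the $G$-module generated by the vectors $g\mathbf 1_P-\mathbf 1_P$ contains some $e_i-e_j$. In the one-swappable case, $e_q-e_p$ is itself a generator. This gives a reusable sufficient condition for arbitrary permutation groups.
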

\begin{proof}
    Let $y^\ast\in\mathcal U$ be generic and suppose, toward a contradiction, that
    $\emptyset\neq P\varsubsetneq X_{y^\ast}=\{x_1,\dots,x_d\}$ is trace-invariant.  Let $|P|=r$.  The argument for the full symmetric group above extends directly to one-swappable groups, proving the statement for one-swappable groups.
    
    Now suppose that $G_\pi$ is $2$-transitive. Let $\mathcal O$ denote the monodromy orbit of $P$.  Since $G_\pi$ is transitive, every $x_i$ is in the same number of elements of $\mathcal{O}$, which we denote by $\lambda_1$.  Similarly, since $G_\pi$ is $2$-transitive, every pair $(x_i,x_j)$ with $i\not=j$ appears in the same number of elements of $\mathcal{O}$, which we denote by $\lambda_2$.  Adding the occurrences over all elements or pairs, we arrive at the following:
    \[
    d\lambda_1=r|\mathcal{O}|\quad\text{and}\quad\frac{d(d-1)}{2}\lambda_2=\frac{r(r-1)}{2}|\mathcal{O}|.
    \]
    Since $0<r<d$, $\lambda_1\not=\lambda_2$.  For any $i\in[d]$, if we add the traces of all the elements of $\mathcal{O}$ which contain $x_i$, we obtain the following equalities
    $$
    \lambda_1\Sigma(P)=\lambda_1 x_i+\lambda_2\sum_{j\not=i}x_j=(\lambda_1-\lambda_2)x_i+\lambda_2\Sigma(X_{y^\ast}).
    $$
    Hence, $(\lambda_1-\lambda_2)x_i$ is independent of the choice of $i$.  Since $\lambda_1\not=\lambda_2$, it follows that all $x_i$ are equal, a contradiction.  Hence, $\pi$ admits no proper and nonempty trace invariant subsets.
\end{proof}
In general, characterizing the branched covers which admit trace-invariant subsets cannot be done on purely the group-theoretic level, as seen in the following example.  Indeed, whether $\pi:X \to Y$ admits proper and nonempty trace-invariant subsets depends on an embedding of $X$ whereas the the monodromy group of $\pi$ does not. 

\begin{example}
    Consider the branched cover 
    \begin{align*} 
    \pi: \mathbb{C} &\to \mathbb{C} \\ 
    z &\mapsto z^4
    \end{align*}
    The monodromy group is the cyclic group $C_4$ and fibres are of the form $\pi^{-1}(y^\ast) = \{a,b,-a,-b\}$, listed in their cyclic order induced by $C_4$. Clearly $\{a,-a\}$ is trace-invariant. 
    
    Now consider 
    \begin{align*} 
    \pi': \mathcal V(x^2-y)&\to \mathbb{C} \\ 
    (x,y) &\mapsto x^4
    \end{align*}
    We observe that, $\mathcal V(x^2-y)$ is isomorphic to $\mathbb{C}$ via $\phi(t) = (t,t^2)$ and $\pi = \pi' \circ \phi$, so $\pi'$ and $\pi$ represent the same branched cover. Thus, $G_{\pi'}=C_4$ acts cyclically on a fibre of the form 
    \[
    \pi'^{-1}(y^\ast) = \{(a,a^2),(b,b^2),(-a,a^2),(-b,b^2)\}.
    \]
    The subset $\{(a,a^2),(-a,a^2)\}$ is not trace-invariant since $a^2\not=b^2$. 
\end{example}

The following result combines our generalized trace test with the restricted monodromy results of earlier sections. 
\begin{theorem}[Generalized Restricted Trace Test]
\label{thm:generalizedrestrictedtrace}
        Let $\pi:X \to Y$ be a branched cover, with $X \subseteq \mathbb{C}^n$, $y^\ast \in Y$ generic, $P$ a nonempty subset of $X_{y^{\ast}}$, $\psi: Y \to Z$ a $\pi$-generic dominant map, and $V$ the level set through $y^\ast$. Suppose $G_\pi$ is the full symmetric group and $G_{\pi_V}$ contains a transposition. Then $\Xi_{P,\pi_V}$ descends to a function on $\mathcal U \cap V$ if and only if $P=X_{y^{\ast}}$. 
\end{theorem}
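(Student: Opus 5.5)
By \Cref{thm:iff-full-symmetric}, the hypotheses that $G_\pi$ is the full symmetric group and that $G_{\pi_V}$ contains a transposition force $G_{\pi_V}$ to be the full symmetric group as well. The plan is therefore to reduce the statement to the Generalized Trace Test (\Cref{prop:generalizedtracetest}) applied to the restricted branched cover $\pi_V\colon X_V\to V$, rather than to $\pi$ itself.

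First I would check that $\pi_V$ satisfies the hypotheses of \Cref{prop:generalizedtracetest}. Since $\psi$ is $\pi$-generic and $V$ is the level set through the generic point $y^\ast$, $\pi_V$ is a branched cover of degree $d$. Its unbranched locus contains $\mathcal U\cap V$, and $X_V\subseteq X\subseteq\mathbb C^n$ is affine. The point $y^\ast$ is generic in $V$: genericity of $y^\ast$ in $Y$ implies that $\psi(y^\ast)$ is generic in $Z$ and that $y^\ast$ avoids any fixed proper subvariety of the corresponding fibre. One subtlety is that the relevant open set for $\pi_V$ is $\mathcal U\cap V$ rather than a possibly larger complement of the branch locus of $\pi_V$. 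However, removing a proper subvariety of complex codimension at least one does not change which monodromy permutations occur. The monodromy group computed over $\mathcal U\cap V$ is still $G_{\pi_V,y^\ast}$, since loops in the larger set can be pushed off a real codimension-2 set. So \Cref{prop:generalizedtracetest} applies, with $\Pi_1(\mathcal U\cap V,y^\ast,\cdot)$ as the universal cover and $\Xi_{P,\pi_V}$ as the transported trace.

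Second, because $G_{\pi_V,y^\ast}=S_{X_{y^\ast}}$, this group is both one-swappable and $2$-transitive. By \Cref{lem:sufficient_for_no_TIS}, $\pi_V$ then admits no proper nonempty trace-invariant subset. The concrete swap argument is also direct: given $\emptyset\neq P\subsetneq X_{y^\ast}$, take $p\in P$, $q\notin P$, and the transposition $g=(p,q)\in G_{\pi_V}$. Then $\Sigma(g(P))-\Sigma(P)=q-p\neq 0$, because the points of a generic fibre are distinct. The second clause of \Cref{prop:generalizedtracetest} then gives that $\Xi_{P,\pi_V}$ descends to $\mathcal U\cap V$ if and only if $P=X_{y^\ast}$.

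For the converse direction, when $P=X_{y^\ast}$ we have $\sigma_\tau(P)=X_{\tau(1)}$ for every path $\tau$, so $\Xi_{P,\pi_V}$ trivially descends. The main obstacle I expect is the first step: justifying that genericity of $y^\ast$ in $Y$ gives the genericity in $V$ required by \Cref{prop:generalizedtracetest} (through \Cref{lem:coincidencealgebraicsubset}), and that working over $\mathcal U\cap V$ is harmless. I would handle both by the codimension arguments used in \Cref{lem:generic_strip_fundamental_equivalence}.
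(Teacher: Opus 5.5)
Your proposal is correct and follows the paper's proof essentially step for step. The paper also uses \Cref{thm:main_theorem} (equivalently \Cref{thm:iff-full-symmetric}) to upgrade $G_{\pi_V}$ to the full symmetric group, then combines one-swappability with \Cref{lem:sufficient_for_no_TIS} and the second clause of \Cref{prop:generalizedtracetest} applied to $\pi_V$; your explicit transposition-swap argument and your remarks on the genericity of $y^\ast$ in $V$ and on working over $\mathcal U\cap V$ only make explicit points the paper leaves implicit.
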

\begin{proof}
    \Cref{thm:main_theorem} shows that since $G_{\pi_V}$ contains a transposition and $G_\pi$ is the full symmetric group, $G_{\pi_V}$ is the full symmetric group too. In particular, $G_{\pi_V}$ is one-swappable,  so by \Cref{lem:sufficient_for_no_TIS}, $\pi_V$ admits no proper and nonempty trace-invariant subsets.  By \Cref{prop:generalizedtracetest}, since $\pi_V$ admits no proper and nonempty trace-invariant subsets, the function $\Xi_{P,\pi_V}$ descends if and only if $P$ is the whole fibre $X_{y^\ast}$. 
\end{proof}

We end this section with a remark about a further generalization, which we do not explore here, as well as a list of one-swappable groups.
\begin{remark}
    The trace map $\Sigma$ may be replaced throughout by any function
    $g$ defined on finite subsets of $X$.  In this case, one defines
    \[
        \Xi_{P,g}([\tau])=g(\sigma_\tau(P)),
    \]
    and the notions of descent and $g$-invariance are defined analogously.
    The trace test considered here is the special case $g=\Sigma$.  See \cite{Som02} for some details in this direction.
\end{remark}

\begin{remark}
    By exhaustive computation in GAP, the one-swappable groups of degree $\leq 22$ other than the natural alternating groups and natural symmetric groups are 
\[
\begin{array}{c|l}
\text{degree} & \text{one-swappable groups} \\ \hline
5 & C_{5},\ D_{10} ,\ C_5 \rtimes C_4\\
6 & A_5, S_5 \quad \text{(using their representations in }S_6\text{)}\\
7 & D_{14},\ C_7 \rtimes C_6  \\
8 & PSL(3,2) \rtimes C_2 \\
9 & PSL(2,8), PSL(2,8) \rtimes C_3
\end{array}
\]
  Ara\'ujo, Mitchell, and Schneider classify the permutation groups
    with the \emph{universal transversal property} (see 
    \cite[Definition~2.3 and Theorem~2.7]{AMS11}).
    Apart from the natural alternating and symmetric groups, their
    exceptional list agrees with
    our computed list of one-swappable groups through degree $22$,
    with one exception: $D_{14}$ is one-swappable but does not have the
    universal transversal property. We leave it to the reader to determine all one-swappable groups and relate them to the universal transversal property. 
    \label{rem:one-swappable-groups}
\end{remark}

\section{Trace Tests for Sparse Polynomial Systems}
\label{sec:sparse_trace_test}
Using our results, we complete the sparse trace
test introduced in \cite{STT23}. To state our improvements, we first describe the setting. Fix $n\in \N$ and let $\mydef{\C[x]} \vcentcolon= \C[x_1^{\pm 1},\dots,x_n^{\pm 1}]$ be the Laurent polynomial ring in $n$ variables. The \mydefit{support} of a polynomial $f(x)= \sum c^*_{\alpha} x^\alpha = \sum c^*_{\alpha} x_1^{\alpha_1}\cdots x_n^{\alpha_n} \in \C[x]$ is the finite set \mydef{\textrm{supp}$(f)$} consisting of the exponents $\alpha\in\mathbb{Z}^n$ such that the corresponding coefficient $c^*_\alpha \in \C$ is nonzero.  For $m\in \N$, let $\mydef{\mathcal A_{\bullet}} = (\mydef{\mathcal A_1},\ldots,\mydef{\mathcal A_m})$, where each $\mathcal A_i \subseteq \mathbb{Z}^n$ is a nonempty finite set. A \mydefit{sparse (Laurent) polynomial system} supported on $\mathcal A_{\bullet}$ is a system of $m$ polynomials $\mydef{\mathcal F} = (\mydef{f_1},\ldots,\mydef{f_m})$ in $n$ variables $x=(x_1,\ldots,x_n)$ where $\textrm{supp}(f_i) \subseteq \mathcal A_i$ for each $i$. When $m=n$, $\cF$ is called \mydefit{square}.

We denote the space of (coefficients of) polynomial systems supported on $\mathcal A_\bullet$ by \[\mydef{\C^\Abullet}\vcentcolon= \mathbb{C}^{|\mathcal A_1|} \times \cdots \times \mathbb{C}^{|\mathcal A_m|}\]  and  the polynomial system corresponding to $\mydef{c^\ast} =((c_{\alpha}^*)_{\alpha \in \mathcal A_{i}})_{i=1}^m\in \C^\Abullet$ by
\[\mydef{\mathcal F_{c^\ast}(x)} \vcentcolon= (f_1(x;c^\ast),\ldots,f_m(x;c^\ast)) = (f_1,\ldots,f_m) \in (\mathbb{C}[x])^m.
\]
Using indeterminates $c=((c_{\alpha})_{\alpha \in \mathcal A_i})_{i=1}^m$ in place of coefficients gives a system $\mydef{\mathcal F_{c}(x)} \in (\C[c][x])^m$. We call this the \mydefit{universal sparse polynomial system} over $\mathcal A_{\bullet}$.
For  \textit{essential} $\mathcal A_{\bullet}$, the incidence variety  consisting of zeros of $\mathcal F_{c}(x)$ (see Equation \eqref{eq:sparseincidence}) forms a branched cover $\pi_{\mathcal A_{\bullet}}$ over the space of coefficients.  This branched cover is the central object of this section.  Obtaining a trace test for $\pi_{\mathcal A_\bullet}$ was the main goal of \cite{STT23}.  

Like in the classical setting, the \textit{sparse trace test} \cite[Algorithm 1]{STT23} involves a restriction. The purpose of the restriction is to control the trace function and obtain an effective trace test.  In the sparse setting, the generic level sets are given by fixing some coefficients in $\mathbb{C}^{\mathcal A_{\bullet}}$, while varying others. In the original sparse trace test, the authors demand that 
\begin{itemize}
    \item $G_{\pi_{\mathcal A_{\bullet}}}$ is the full symmetric group,
    \item The fixed coefficients are generic,
    \item The coefficients being moved are \textit{abundant}, and
    \item The coefficients being moved are \textit{trace affine linear}.
\end{itemize}
The purpose of abundance is to guarantee that the restricted branched cover retains the full symmetric monodromy of $G_{\pi_{\mathcal A_{\bullet}}}$. This proves one-swappability and is used in the \textit{converse} of the sparse trace test. The purpose of trace affine linearity is to guarantee that the relevant trace function is linear, and thus can be checked easily in practice. 

Our contribution adjusts these assumptions: abundance is removed and trace affine linearity is replaced with descent of the local trace function. We remove abundance by applying our main theorem (\Cref{thm:main_theorem}) to the generic level set obtained by moving only one coefficient to obtain \Cref{cor:sparse-restriction-full-symmetric}. Under the assumption that $G_{\pi_{\mathcal A_{\bullet}}} = S_d$, the transposition in that restriction guarantees that full symmetric monodromy is preserved. Our general framework then replaces trace affine linearity with the weaker requirement that the trace descends to a function of the moving coefficient. Linearity is useful for making this condition easy to check, and the results of \cite[Lemma 10]{STT23} remain useful for identifying when the trace depends linearly on individual coefficients, but linearity is not necessary for the trace test itself. Bounds for the degrees of the trace in terms of the combinatorics of the support are given in \cite[Theorem 25]{STT23}. The result is a new, complete, sparse trace test, \Cref{alg:sparse-trace-test}.
 
\subsection{Structure of a set of supports} 
Consider a collection $\mathcal A_{\bullet}=(\mathcal A_1,\ldots,\mathcal A_m)$ of supports in $\mathbb{Z}^n$. For $\emptyset \neq I \subseteq [m]$, we define $\mydef{\mathcal{A}_I}$ as $(\mathcal{A}_i)_{i\in I}$ and write $\mydef{\dim(\mathcal A_I)}$ for the dimension of the affine~span, which equals the dimension of 
\[\mydef{\mathbb{R}\mathcal A_I} \vcentcolon= \textrm{span}_{\mathbb R}\{\alpha-\beta \mid \alpha,\beta \in \mathcal A_i \text{ for }i \in I\}.
\] We define the \mydefit{defect} of $\mathcal A_I$ to be $\dim(\mathcal A_I)-|I|$. 

Now suppose that $n=m$, that is, the set of supports is \textit{square}. 
We define the \mydefit{mixed volume} of $\Abullet$ to be the mixed volume of the convex hulls of the supports: \[\mydef{d_{\mathcal A_{\bullet}}} :\textrm{MV}(\Abullet) \vcentcolon= \textrm{MV}(\textrm{conv}(\mathcal A_1),\ldots,\textrm{conv}(\mathcal A_n)).
\] A square set of supports is said to be \mydefit{essential} if the defect of any $\mathcal{A}_I$ is nonnegative for all $I$.  The following result from Minkowski relates the essential property with the mixed volume.

\begin{lemma}[see \cite{Khov16}]
    The mixed volume of  $\mathcal A_{\bullet}$ is nonzero if and only if $\mathcal A_{\bullet}$ is essential. 
\end{lemma}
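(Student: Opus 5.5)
The plan is to prove the two directions separately. Both use standard properties of mixed volume: symmetry, multilinearity under Minkowski sums, monotonicity under inclusion, and $\textrm{MV}(P_1,\ldots,P_n)=\frac{1}{n!}\cdot$ (coefficient of $\lambda_1\cdots\lambda_n$ in $\textrm{vol}_n(\lambda_1P_1+\cdots+\lambda_nP_n)$). For each $i$, write $P_i=\textrm{conv}(\mathcal A_i)$ and $D_i=\{\alpha-\beta\mid \alpha,\beta\in\mathcal A_i\}$. Then $\dim(\mathcal A_I)=\textrm{rank}\bigl(\bigcup_{i\in I}D_i\bigr)$, so essentiality says exactly that $\textrm{rank}\bigl(\bigcup_{i\in I}D_i\bigr)\geq |I|$ for every nonempty $I\subseteq[n]$.

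\emph{Nonzero mixed volume implies essential.} I argue the contrapositive. Suppose some $I$ has $\dim(\mathcal A_I)=r<|I|$. After translating each $P_i$, which does not change $\textrm{MV}$, every $P_i$ with $i\in I$ lies in an $r$-dimensional linear subspace $L$. Then $Q=\sum_{i\in I}\lambda_iP_i\subseteq L$. Let $R=\sum_{j\notin I}\lambda_jP_j$, and decompose $\mathbb R^n=L\oplus L^\perp$. Each fibre of $Q+R$ over a point of $L^\perp$ is a Minkowski sum of $Q$ with a fibre of $R$. This shows that $\textrm{vol}_n(Q+R)$, viewed as a polynomial in the variables $\lambda_I$, has degree at most $r$, since only $r$ dimensions of volume can be contributed by $Q$. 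Equivalently, in the standard mixed-volume expansion, every term in which more than $r$ arguments come from polytopes contained in $L$ vanishes. The monomial $\lambda_1\cdots\lambda_n$ has degree $|I|>r$ in $\lambda_I$, so its coefficient is zero and $\textrm{MV}=0$.

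\emph{Essential implies nonzero mixed volume.} First, I use the segment criterion: if there are segments $[a_i,b_i]\subseteq P_i$ whose directions $b_i-a_i$ are linearly independent, then monotonicity gives
$$\textrm{MV}(P_1,\ldots,P_n)\geq \textrm{MV}([a_1,b_1],\ldots,[a_n,b_n])=|\det(b_1-a_1,\ldots,b_n-a_n)|>0.$$
The equality holds because the Minkowski sum of the scaled segments is a parallelepiped with volume $\lambda_1\cdots\lambda_n|\det|$. It then remains to choose $v_i\in D_i$ forming a basis of $\mathbb R^n$, that is, an independent transversal of the family $(D_1,\ldots,D_n)$ in the linear matroid on $\mathbb R^n$. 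Rado's theorem states that such a transversal exists if and only if $\textrm{rank}\bigl(\bigcup_{i\in I}D_i\bigr)\geq|I|$ for all $I$, which is precisely the essential condition.

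The main obstacle is this matroid step. If I do not cite Rado's theorem, I would prove the transversal statement by induction on $n$, in the style of Hall's theorem. If some proper nonempty $I$ is tight, meaning $\dim(\mathcal A_I)=|I|$, I would solve the problem for $\mathcal A_I$ inside the span $\mathbb R\mathcal A_I$. I would then pass to the quotient $\mathbb R^n/\mathbb R\mathcal A_I$ for the remaining supports, checking that the rank condition descends to the quotient. If no proper $I$ is tight, I can pick any $v_1\in D_1$ outside a suitable hyperplane, quotient by $v_1$, and check that the remaining family still satisfies the rank inequalities. The degree bound in the first direction is routine once it is phrased through the mixed-volume expansion.
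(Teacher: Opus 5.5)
The paper does not prove this lemma. It records it as a classical fact, attributed to Minkowski, with a pointer to Khovanskii, so there is no internal argument to compare against.

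Your proof is correct, and it is essentially the standard one from convex geometry. That standard result is the equivalence of three conditions: $\textrm{MV}>0$; the existence of segments in the $P_i$ with linearly independent directions; and $\dim\sum_{i\in I}P_i\geq|I|$ for all $I$ (as in Schneider's book on convex bodies). What your argument adds over the citation is self-containedness. It also cleanly separates the geometric content from the purely matroidal content of essentiality, which is Rado's theorem. The geometric content is slicing along $L\oplus L^\perp$ for one direction, and monotonicity plus the parallelepiped computation for the other.

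\textbf{Easy direction.} Your slicing argument is sound. Since $Q\subseteq L$, the projection of $Q+R$ to $L^\perp$ does not involve $\lambda_I$. Each fibre is $Q+R_p$, whose $r$-volume has degree at most $r$ in $\lambda_I$ by Minkowski's theorem in $L$.

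\textbf{Inductive substitute for Rado.} This works, and it is simpler than you indicate.
\begin{itemize}
\item When no proper nonempty $I$ is tight, any nonzero $v_1\in D_1$ will do. Passing to $\mathbb R^n/\langle v_1\rangle$ lowers every rank by at most one, while every proper subfamily has slack at least one, so no hyperplane needs to be avoided.
\item In the tight case, the image of $\bigcup_{j\in J}D_j$ in $\mathbb R^n/\mathbb R\mathcal A_I$ has rank $\dim(\mathcal A_{I\cup J})-\dim(\mathcal A_I)\geq |J|$. This is exactly the descended inequality.
\end{itemize}

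\textbf{A normalization slip.} With your normalization $\textrm{MV}=\frac{1}{n!}\cdot(\text{coefficient of }\lambda_1\cdots\lambda_n)$, the segments give $|\det(b_1-a_1,\ldots,b_n-a_n)|/n!$, not $|\det|$. The value $|\det|$ is correct for the BKK normalization without the $1/n!$. In that normalization, integrality of the $v_i$ even gives $\textrm{MV}\geq 1$. Neither version affects positivity.
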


Next we discuss two special features that a square set of supports may have. We define the sublattice of $\mathbb{Z}^n$ generated by differences in some $\mathcal A_I$ as
\[
\mydef{\mathbb{Z}\mathcal A_I} \vcentcolon= \textrm{span}_{\mathbb Z}\{\alpha -\beta \mid \alpha,\beta \in \mathcal A_i \text{ for some }i \in I\}.
\] We write $\mydef{[\mathbb{Z}^n : \mathbb{Z}\mathcal A_I]}$ for the index of this lattice, if it is finite.

A set of supports $\mathcal A_{\bullet}$ is said to be \mydefit{triangular} if it has a proper subset $\mathcal A_I$ of defect zero.  We say that a square set of supports $\mathcal A_\bullet$ is \mydefit{strictly triangular} if $1<\textrm{MV}(\mathcal A_I)<\MV(\mathcal A_{\bullet})$ where $\mydef{\MV(\mathcal A_I)}$ denotes the mixed volume of the supports $\mathcal A_I$ of defect zero with respect to the $\dim(\mathcal A_I)$-dimensional volume form on their affine span. The support $\mathcal A_{\bullet}$ is \mydefit{lacunary} if the lattice index $[\mathbb{Z}^n:\mathcal L(\mathcal A_{\bullet})]$ is greater than one,  and a square set of supports is \mydefit{strictly lacunary} if $1<[\mathbb{Z}^n:\mathcal L(\mathcal A_{\bullet})]<d_{\mathcal A_{\bullet}}$. 

\subsection{Sparse systems as branched covers}
The \mydefit{incidence variety} of the universal sparse polynomial system with support $\mathcal{A}_{\bullet}$ is the variety of (toroidal) solution-coefficient pairs of $\mathcal F_{c}(x)$:
\begin{equation}    \label{eq:sparseincidence}
\mydef{X_{\mathcal A_{\bullet}}} \vcentcolon= \mathcal V^{\times}(\mathcal F_{c}(x)) = \{(x^*,c^\ast) \in (\mathbb{C}^\times)^n \times \mathbb{C}^{\mathcal A_{\bullet}} \mid \mathcal F_{c^\ast}(x^*) = \textbf{0}\} \subseteq (\mathbb{C}^\times)^n \times \mathbb{C}^{\mathcal A_{\bullet}}.
\end{equation}  We observe that $X_{\mathcal A_\bullet}$ is irreducible since the fibres of the projection onto the first factor consist of linear spaces of dimension $|\mathcal A_\bullet|-n$ over $(\mathbb{C}^\ast)^n$.  Define the map $\mydef{\pi_{\mathcal A_{\bullet}}}: X_{\mathcal A_{\bullet}} \to \mathbb{C}^{\mathcal A_{\bullet}}$ to be the projection onto the second factor. The fibre $\pi_{\mathcal A_{\bullet}}^{-1}(c^\ast)$ consists of the 
solutions in the \mydefit{torus} $\mathbb{C}^\times$ to the particular sparse system $\mathcal F_{c^\ast}(x)$. The Bernstein-Kushnirenko-Khovanskii (BKK) theorem tells us that $\pi_{\mathcal A_{\bullet}}$ is a branched cover of degree $d_{\mathcal A_{\bullet}} = \textrm{MV}(\Abullet)$.

\begin{lemma}[BKK, \cite{BKK75, Kush75, Khov78}]\label{lem:BKK}
    A sparse polynomial system with square support $\mathcal A_{\bullet}$ has at most $d_{\mathcal A_{\bullet}}$-many isolated solutions over $\C^\times$. For a generic system supported on $\mathcal A_{\bullet}$, this is attained and all solutions are isolated and regular.
\end{lemma}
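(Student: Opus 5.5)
The plan splits the statement into three claims: the generic count equals $d_{\mathcal A_\bullet}$, generic solutions are isolated and regular, and every system has at most $d_{\mathcal A_\bullet}$ isolated toric solutions. I would prove regularity first, since it uses only what the paper already has. The incidence variety $X_{\mathcal A_\bullet}$ is smooth and irreducible, because its projection to $(\mathbb C^\times)^n$ is a vector bundle with fibres the linear spaces $\{c : \mathcal F_c(x^*)=0\}$. These equations are linearly independent in $c$, since each $f_i$ involves its own block of coefficients and $x^{*\alpha}\neq 0$. Applying Sard's theorem (algebraic generic smoothness, as cited in the paper via \cite[Proposition 14.4]{Har92}) to $\pi_{\mathcal A_\bullet}$, a generic fibre consists of regular points of the map. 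At such a point the Jacobian $\partial_x \mathcal F_{c^*}$ is invertible, so the solutions are isolated and regular. If the generic fibre is nonempty, the map is dominant between spaces of equal dimension and hence generically finite.

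Next I would compute the generic degree by Bernstein's toric degeneration (equivalently, the Huber--Sturmfels polyhedral homotopy). Choose generic integer lifts $\omega_i:\mathcal A_i\to\mathbb Z$ and consider the family $\mathcal F_{c,t}(x)=\bigl(\sum_{\alpha\in\mathcal A_i} c_\alpha t^{\omega_i(\alpha)}x^\alpha\bigr)_i$ over $\mathbb C\{\{t\}\}$. Every solution over the Puiseux series field has an initial exponent vector $v$ and leading coefficient $x_0$. Here $x_0$ solves the initial system $\mathrm{in}_v\mathcal F$, supported on the faces of the lifted polytopes minimized by $(v,1)$. For generic lifts the only $v$ with nontrivial toric solutions are the inner normals of the mixed cells $C_1+\cdots+C_n$ of the induced fine mixed subdivision, where each $C_i$ is an edge. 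Each initial system is then binomial, and for generic $c$ it has exactly $|\det(C_1,\dots,C_n)|=\mathrm{vol}(C_1+\cdots+C_n)$ solutions, all regular. Hensel's lemma (Newton lifting) lifts each of them uniquely to a Puiseux series solution. Summing over the mixed cells gives $\mathrm{MV}(\mathcal A_\bullet)$. Since genericity is Zariski open, the count for the generic $t$-specialization equals the count over the series field, giving $d_{\mathcal A_\bullet}$ solutions for generic $c^*$.

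Finally, for the upper bound at an arbitrary $c^*$, I would argue by conservation of number. Let $p$ be an isolated toric solution of $\mathcal F_{c^*}$ and take a generic line through $c^*$ in $\mathbb C^{\mathcal A_\bullet}$. Restricting $X_{\mathcal A_\bullet}$ over this line gives a curve. Isolatedness of $p$ means some branch of this curve through $p$ dominates the line, so every isolated solution is the limit of at least one solution of nearby generic systems. Distinct isolated solutions are limits of disjoint sets of branches, because small disjoint neighbourhoods separate them. Hence the number of isolated solutions is at most the generic count $d_{\mathcal A_\bullet}$. I expect the main obstacle to be the degeneration step, specifically showing that \emph{every} Puiseux solution arises from a mixed cell. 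This requires the genericity of the lift, so that all initial systems at non-mixed cells have positive defect and therefore no toric solutions, plus the combinatorial identity that the mixed cells' volumes sum to the mixed volume. For the latter I would invoke the standard formula $\mathrm{MV}=\sum_{\text{mixed cells}}\mathrm{vol}$ for fine mixed subdivisions rather than reprove it.
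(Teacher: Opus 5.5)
The paper does not prove this lemma. It is imported from \cite{BKK75, Kush75, Khov78} and used only as a black box, to know that $\pi_{\mathcal A_\bullet}$ is a branched cover of degree $d_{\mathcal A_\bullet}$ with regular generic fibres. Your proposal is a correct sketch of the standard modern proof, in the spirit of Bernstein's Puiseux-series argument made explicit via mixed cells as in Huber--Sturmfels. You supplement it with generic smoothness on the incidence variety, using the same vector-bundle structure the paper invokes for irreducibility, and a conservation-of-number argument for the upper bound.

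Compared with citing, your decomposition makes clear which parts are soft and which carries the content. Regularity follows from Sard, and the bound on isolated solutions from deformation. The real work is identifying the generic degree with $\mathrm{MV}$. It is also constructive: your degeneration is the polyhedral homotopy used in numerical practice to solve sparse systems. The cost is the fine-mixed-subdivision machinery and the identity $\mathrm{MV}=\sum_{\text{mixed cells}}\mathrm{vol}$, which you reasonably cite. There is also the routine transfer from the Puiseux count to complex systems. Choose $(c,t_0)$ so that $(c_\alpha t_0^{\omega_i(\alpha)})_\alpha$ is itself generic; this is possible since that map is dominant.

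Two small points should be tightened.

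First, the sign of the defect. In the paper's convention the defect of $\mathcal A_I$ is $\dim(\mathcal A_I)-|I|$. Initial systems at non-mixed cells have no toric zeros because some lower face $F_i$ is a single lifted point, so $\mathrm{in}_v f_i$ is a monomial. That is a negative defect, not a positive one.

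Second, $X_L$ need not be a curve, since special fibres can contain positive-dimensional components. What the deformation step actually uses is that $X_L$ is cut out by $n$ equations in the $(n+1)$-dimensional space $(\mathbb{C}^\times)^n\times L$. Hence every component through $(p,c^*)$ has dimension at least $1$. Isolatedness of $p$ then forces such a component to be a curve dominating $L$ near $c^*$. Together with $L\not\subseteq\Delta$, this maps the isolated solutions of $\mathcal F_{c^*}$ injectively into a nearby fibre with exactly $d_{\mathcal A_\bullet}$ points. You should state this dimension count explicitly rather than calling $X_L$ a curve.
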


The notion of generic in \Cref{lem:BKK} agrees with the notion of generic in the definition of a branched cover. Over the branch locus $\Delta$, fibres have fewer than $d_{\mathcal{A}_\bullet}$ regular isolated points.  In particular, fibres over $\Delta$ may have singular points, solutions outside the torus, or positive-dimensional solution sets. Esterov proved conditions on $\Abullet$ under which the monodromy group $G_{\pi_\Abullet}$ is the full symmetric group $S_{d_{\mathcal{A}_\bullet}}$.  

\begin{lemma}[\cite{Est19}, see also \cite{Sott25, STT23}]
\label{lem:est-uniform-monodromy}
    Let $\mathcal A_{\bullet}$ be a square set of supports. If $d_\Abullet = 2$, or $\Abullet$ is non-lacunary and non-triangular, then $G_{\pi_{\mathcal{A}_\bullet}}$ is the full symmetric group $S_{d_{\mathcal{A}_\bullet}}$. Otherwise $G_{\pi_\Abullet}$ is not the full symmetric group.
\end{lemma}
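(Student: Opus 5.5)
This statement is cited from Esterov~\cite{Est19}, so the plan is to reconstruct its argument in the language of this paper rather than derive it from scratch. I treat the two directions separately. First the \emph{negative} direction. Suppose $d_\Abullet>2$ and $\Abullet$ is lacunary or triangular; I will produce a decomposition of $\pi_\Abullet$ and conclude with \Cref{lem:imprimitive_iff_decomposable} (or directly with \Cref{lem:decomposable_preserves_blocks}) that $G_{\pi_\Abullet}$ is imprimitive, hence not $S_{d}$. Some care is needed here. When the index is all of $d_\Abullet$ (resp.\ $\MV(\mathcal A_I)\in\{1,d_\Abullet\}$), the decomposition may have a factor of degree one. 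In that case I instead argue directly that the group is a proper subgroup, for example abelian, as in the fully lacunary case, and so not $S_d$ once $d>2$.

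\emph{Lacunary case.} Let $L=\mathbb Z\Abullet$ have index $\ell>1$. After translating each $\mathcal A_i$ (which multiplies $f_i$ by a monomial and does not change solutions in the torus) we may assume $\mathcal A_i\subseteq L$. Choose a basis so that $L$ is the image of a monomial map $\phi:(\mathbb C^\times)^n\to(\mathbb C^\times)^n$ of degree $\ell$. Then every system factors as $\mathcal F_c=\mathcal G_c\circ\phi$, and so $\pi_\Abullet$ factors through the incidence variety of $\mathcal G$, with fibres of $\phi$ of size $\ell$. This gives blocks of size $\ell$.

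\emph{Triangular case.} Take $\mathcal A_I$ with defect zero. Project along $\mathbb R\mathcal A_I$: the subsystem $\mathcal F_I$ depends only on the coordinates in the torus of $\mathbb Z\mathcal A_I$, and it has $\MV(\mathcal A_I)$ solutions there. The map sending a solution to its image solution of $\mathcal F_I$ is a factorization $X_\Abullet\to X_{\mathcal A_I}\to\mathbb C^\Abullet$, with degrees $\MV(\mathcal A_I)$ and $d/\MV(\mathcal A_I)$ by the mixed-volume product formula.

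\emph{Positive direction.} If $d=2$ there is nothing to do, since a transitive subgroup of $S_2$ is $S_2$. Otherwise I follow the strategy this paper advocates: show that $G_{\pi_\Abullet}$ contains a transposition and is primitive. Then \Cref{cor:uniformorimprimitive} forces $G=S_d$.

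For the transposition, I locate a generic point of the codimension-one discriminant component (the sparse $\Abullet$-discriminant). Over such a point the system has exactly one double root in the torus and $X_\Abullet$ is smooth there; smoothness is automatic because $X_\Abullet$ is a vector bundle over the torus. Then \Cref{lem:smooth-simplyramified-transp} gives the transposition. The one thing to verify is that the discriminant is a hypersurface whose generic point is simply ramified. I expect this to follow by fixing $x^*$ and choosing coefficients with a prescribed simple tangency, using that the supports span $\mathbb R^n$.

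For primitivity, suppose $G$ preserves a block system. By \Cref{lem:imprimitive_iff_decomposable} we get a decomposition $X_\Abullet\dashrightarrow X'\dashrightarrow\mathbb C^\Abullet$. The main obstacle is showing that any such decomposition must come from a sublattice, which contradicts non-lacunarity, or from a defect-zero subsystem, which contradicts non-triangularity.

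My first attempt at this step would be to use the torus action. The torus $(\mathbb C^\times)^n$ acts on $X_\Abullet$ by $(x,c)\mapsto(tx,c\cdot t^{-\alpha})$, and I would try to show that this action descends to $X'$. Intermediate covers then correspond to torus-invariant subfields of $\mathbb C(X_\Abullet)$, and comparing them with the monomial structure should force $X'$ to be either a quotient by a finite subgroup of the torus, which is the lacunary case, or a projection to a coordinate subtorus, which is the triangular case. Alternatively, I would degenerate the coefficients via tropical/toric degenerations and track the blocks over the limiting fibres.
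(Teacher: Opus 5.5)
Since the paper only quotes this lemma from \cite{Est19}, there is no proof to compare with. Judged on its own, your plan has genuine gaps.

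\textbf{The ``otherwise'' half.} Your lacunary factorization through $\phi$, the strictly triangular factorization, and the regular-abelian argument when $[\mathbb Z^n:\mathbb Z\mathcal A_\bullet]=d\ge 3$ are fine. Two small corrections:
\begin{itemize}
\item Project to the torus of the saturation $\mathbb R\mathcal A_I\cap\mathbb Z^n$; using $\mathbb Z\mathcal A_I$ can produce a degree-one factor.
\item The degrees of $X_{\mathcal A_\bullet}\to X_{\mathcal A_I}\to\mathbb C^{\mathcal A_\bullet}$ are $d/\mathrm{MV}(\mathcal A_I)$ and then $\mathrm{MV}(\mathcal A_I)$, not the other way round.
\end{itemize}
The real problem is your fallback for triangular $\mathcal A_\bullet$ with $\mathrm{MV}(\mathcal A_I)\in\{1,d\}$. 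It cannot succeed, because the assertion is false there. Take $n=2$, $f_1=a_0+a_1x$ and $f_2=b_0+b_1y+b_2y^2+b_3y^3$. These supports are non-lacunary and triangular (both $\mathcal A_{\{1\}}$ and $\mathcal A_{\{2\}}$ have defect zero), and $d=3$. Yet $G_{\pi_{\mathcal A_\bullet}}$ is the monodromy group of a generic cubic, namely $S_3$. Only the strict version is provable: strictly lacunary or strictly triangular implies imprimitive, and fully lacunary with $d\ge 3$ implies regular abelian. In the non-strict triangular case the system merely reduces to a smaller one, whose group can be symmetric.

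\textbf{The positive direction.} Transposition plus primitivity is the right scheme (it is Esterov's), but neither ingredient is supplied.

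Primitivity is left as an open ``main obstacle'', and neither the torus-action idea nor the tropical idea is an argument. The missing idea is an elementary dimension count giving $2$-transitivity. Let
\[
Z=\{(x,y,c)\in(\mathbb C^\times)^n\times(\mathbb C^\times)^n\times\mathbb C^{\mathcal A_\bullet} : x\neq y,\ \mathcal F_c(x)=\mathcal F_c(y)=0\}.
\]
For each $i$, the two linear conditions on $c_i$ are dependent exactly when $x/y\in H_i=\{t : t^v=1\ \forall v\in\mathbb Z\mathcal A_i\}$, a subgroup of dimension $n-\dim\mathcal A_i$. So the part of $Z$ over pairs with $\{i : x/y\in H_i\}=J$ has dimension at most $|\mathcal A_\bullet|-(\dim\mathcal A_J-|J|)$.
\begin{itemize}
\item For $J=\emptyset$ this part is an irreducible vector bundle of dimension $|\mathcal A_\bullet|$.
\item For $\emptyset\neq J\subsetneq[n]$, essentiality and non-triangularity make it strictly smaller.
\item For $J=[n]$ it is empty by non-lacunarity.
\end{itemize}
Every component of $Z$ has dimension at least $|\mathcal A_\bullet|$, since $Z$ is cut out by $2n$ equations. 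Hence $Z$ is irreducible, so for $d\ge 2$ the group $G_{\pi_{\mathcal A_\bullet}}$ is $2$-transitive, hence primitive.

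For the transposition, what you call ``the one thing to verify'' is the technical core of \cite{Est19}. The paper itself cites \cite[Corollary~3.23, Theorem~3.25]{Est19} for it after \Cref{lem:existence-of-transp-STT}. Prescribing a double root at $x^*$ only shows that the degenerate-root incidence is nonempty. You need three things: an ordinary double root (multiplicity exactly two), the other roots simple, and no root at infinity. Together these make the discriminant a hypersurface with a simply ramified generic point.

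Ordinariness is a genuine combinatorial constraint. Work in logarithmic coordinates at $x^*=(1,\dots,1)$, and let $v,w$ span the right and left kernels of the Jacobian. The relevant second-order term is $\sum_i w_i\sum_{\alpha\in\mathcal A_i}c_{i\alpha}(\alpha\cdot v)^2$. Given $\sum_\alpha c_{i\alpha}=\sum_\alpha c_{i\alpha}(\alpha\cdot v)=0$, this term vanishes identically whenever $\alpha\cdot v$ takes only two values on each $\mathcal A_i$ with $w_i\neq 0$. So knowing that the supports span $\mathbb R^n$ is not enough; this step must be cited or genuinely proved.
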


\subsection{Restrictions of sparse polynomial systems} 
We focus on restrictions of sparse branched covers in which exactly one coefficient  varies. Let $\mathcal A_{\bullet}=(\mathcal A_1,\ldots,\mathcal A_n)$ be an essential set of supports. Fix $\beta \in \mathcal A_{\bullet}$, meaning that $\beta\in\mathcal A_i$ for some $i\in[n]$. Suppose $\beta$ is the exponent of the monomial whose coefficient we vary and define
$$\mydef{\mathcal{A}_\bullet-\beta}\vcentcolon=(\mathcal A_1,\dots,\mathcal A_i-\{\beta\},\dots,\mathcal A_n).$$
Define the map $\mydef{\psi_\beta}:\mathbb{C}^{\mathcal A_\bullet}\rightarrow\mathbb{C}^{\mathcal A_\bullet-\beta}$ which forgets the coefficient of  $\textbf{x}^\beta = x_1^{\beta_1}\cdots x_n^{\beta_n}$ in the $i$-th sparse polynomial  $f_i$, thought of as an element of  $\mathbb{C}^{\mathcal A_i}$.  A generic level set of $\psi_\beta$ is a \mydefit{coordinate line}
    \[
V \vcentcolon= \mydef{V_{\beta,c^\ast}} \vcentcolon= \{c^\ast+te_\beta \mid t \in \mathbb{C}\} = \psi_{\beta}^{-1}(\psi_{\beta}(c^\ast)).
    \]
    Fix a generic $c^\ast\in\bC^{\cA_\bullet}$. Henceforth, we use $V$ for $V_{\beta,c^\ast}$ when doing so does not cause any confusion. 

Our goal is to use \Cref{thm:iff-full-symmetric} to study the restricted monodromy group of $V$.  We note $V$ is not generic in the sense of \cite{Zar29,NumericalGalois18} since it is parallel to a coordinate axis and so those results on fundamental groups of restrictions to generic lines do not apply. 

As usual, we consider the restriction ${\pi_{V}}:X_V\rightarrow V$. Then 
 \Cref{prop:restricted-is-branched-cover} and \Cref{lem:existence-of-transp-STT} verify the two conditions of \Cref{thm:iff-full-symmetric}: that $\psi_{\beta}$ is $\pi_{\mathcal A_{\bullet}}$-generic and that $G_{\pi_V}$ contains a transposition. Each makes assumptions on $\mathcal A_{\bullet}$.

\begin{proposition}
\label{prop:restricted-is-branched-cover}
Suppose $\mathcal A_{\bullet}$ is essential, $\beta \in \mathcal A_i$ for some $i \in [n]$, and  \[\mathcal A'_{\bullet} = (\mathcal A_1,\ldots,\widehat{\mathcal A_i},\ldots,\mathcal A_n)\] is non-triangular. Then  the restriction of $\pi_{\mathcal A_{\bullet}}$ to a generic level set of $\psi_{\beta}$ is a branched cover of degree $d_{\mathcal A_{\bullet}}$.
\end{proposition}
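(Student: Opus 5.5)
The plan is to eliminate the moving coefficient and identify $X_V$ with the zero set of an underdetermined sparse system. Write $V=V_{\beta,c^\ast}=\{c^\ast+te_\beta\}$ with $c^\ast$ generic and $\beta\in\mathcal A_i$. A point of $X_V$ is a pair $(x,t)\in(\mathbb C^\times)^n\times\mathbb C$ with $\mathcal F_{c^\ast+te_\beta}(x)=\mathbf 0$. Only the $i$-th equation involves $t$, and it reads $f_i(x;c^\ast)+t\,\mathbf x^\beta=0$. Since $\mathbf x^\beta$ is a unit on the torus, this equation is equivalent to $t=-f_i(x;c^\ast)/\mathbf x^\beta$.

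Consequently, projecting to the $x$-coordinates gives an isomorphism of $X_V$ onto
\[C:=\{x\in(\mathbb C^\times)^n \mid f_j(x;c^\ast)=0 \text{ for all } j\neq i\},\]
with inverse given by the graph of the regular function $x\mapsto -f_i(x;c^\ast)/\mathbf x^\beta$. The system defining $C$ consists of $n-1$ Laurent polynomials in $n$ variables. It is supported on $\mathcal A'_\bullet$, and its coefficients are the coordinates of $c^\ast$ outside $\mathcal A_i$. Because $c^\ast$ is generic in $\mathbb C^{\mathcal A_\bullet}$, these coefficients are generic in $\mathbb C^{\mathcal A'_\bullet}$.

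Next, I would invoke Khovanskii's irreducibility theorem for underdetermined sparse systems \cite{Khov16}. It says that a system of $k<n$ equations with generic coefficients and supports $\mathcal A'_\bullet$ defines a nonempty irreducible subvariety of the torus of dimension $n-k$, provided every subcollection $\mathcal A'_I$ has positive defect. Essentialness of $\mathcal A_\bullet$ gives that every $\mathcal A'_I$ has defect at least $0$. Non-triangularity of $\mathcal A'_\bullet$ rules out defect exactly $0$, so every defect is positive. Hence $C$, and therefore $X_V$, is an irreducible curve.

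Finally, I would check that $\pi_V$ is a dominant map of degree $d_{\mathcal A_\bullet}$. Since $c^\ast$ is generic, $c^\ast\in\mathcal U$, so $V\cap\mathcal U$ is a nonempty Zariski open subset of the line $V$. By \Cref{lem:BKK}, every fibre of $\pi_V$ over $V\cap \mathcal U$ consists of exactly $d_{\mathcal A_\bullet}$ regular points. Thus $\pi_V$ is a dominant, generically finite map of degree $d_{\mathcal A_\bullet}$ between irreducible curves, i.e.\ a branched cover in the sense of \Cref{sec:MonodromyBranched}.

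The main obstacle is the middle step: matching the paper's notion of ``non-triangular'' for the non-square collection $\mathcal A'_\bullet$ with the precise hypothesis in Khovanskii's theorem. I read ``non-triangular'' as ``no subcollection has defect zero''; combined with essentialness, this gives positive defect for all $\mathcal A'_I$. I also need genericity in Khovanskii's theorem to be inherited from the genericity of $c^\ast$, which holds because the coordinate projection $\mathbb C^{\mathcal A_\bullet}\to\mathbb C^{\mathcal A'_\bullet}$ is surjective and so pulls back dense open sets to dense open sets. If Khovanskii's statement also requires $\dim\mathcal A'_\bullet=n$ or a lattice condition, I would check that it follows from essentialness of $\mathcal A_\bullet$ together with non-triangularity.
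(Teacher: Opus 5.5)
Your proposal is correct and is essentially the paper's proof: you realize $X_V$ as the graph of $x\mapsto -f_i(x;c^\ast)/\mathbf{x}^\beta$ over $\mathcal V^\times(f_j : j\neq i)$, apply Khovanskii's irreducibility theorem to the generic underdetermined system supported on $\mathcal A'_\bullet$, and get dominance and degree $d_{\mathcal A_\bullet}$ from \Cref{lem:BKK}. You also derive explicitly that every nonempty subcollection of $\mathcal A'_\bullet$, including $\mathcal A'_\bullet$ itself, has positive defect, which the paper leaves implicit; your reading of ``non-triangular'' is the right one, because the paper's literal ``proper subset'' definition would allow $\mathcal A'_\bullet=(\{(0,0),(1,0),(2,0)\})$ in $\mathbb Z^2$ (with $\beta\in\mathcal A_2$), and for that choice $X_V$ has two components.
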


\begin{proof}
    By \Cref{lem:BKK} and the genericity of $c^\ast$, dominance is automatic, so it suffices to show that $X_{V}$ is irreducible for $V=V_{\beta,c^\ast}$.  Let $\mathcal{F}_{c^\ast}=(f_1,\dots,f_n)$ be the system corresponding to $c^\ast$.  First, we show that $X_{V}$ is a graph over the variety $\mathcal{V}^\times(f_1,\ldots,\widehat{f_{i}},\ldots,f_n)$, where $\widehat{f_i}$ indicates that this polynomial is omitted from the sequence. Indeed, for any $x^\ast\in \mathcal{V}^\times(f_1,\ldots,\widehat{f_{i}},\ldots,f_n)$, there is exactly one value for $t$ which makes $(x^\ast,c^\ast+te_\beta)\in X_{V}$, namely $t=-f_i(x^\ast)/(x^\ast)^\beta$.
    
     Next, we show that $X_{V}$ is irreducible. Since $X_V$ is a graph over $\mathcal{V}^\times(f_1,\ldots,\widehat{f_{i}},\ldots,f_n)$, it suffices to show that the latter variety is irreducible. Since $\mathcal A'_{\bullet}$ is assumed to be non-triangular,  $\mathcal{V}^\times(f_1,\ldots,\widehat{f_{i}},\ldots,f_n)$ is irreducible by Khovanskii's irreducibility theorem \cite[Theorem 17]{Khov16}:  a variety defined by a generic underdetermined sparse polynomial system with non-triangular support is irreducible.
\end{proof}

We note that if $\mathcal A_{\bullet}$ is non-triangular, then every possible $\mathcal A_{\bullet}'$ occurring in \Cref{prop:restricted-is-branched-cover} is non-triangular as well.

\begin{lemma}[{\cite[Theorem 39]{STT23}}]
    \label{lem:existence-of-transp-STT}
    If $\Abullet$ is a non-triangular, non-lacunary set of supports with $\MV(\Abullet)\ge 2$, and $\beta \in \mathcal A_{\bullet}$, then the  monodromy group $G_{\pi_{V}}$ of the restriction of $\pi_{\mathcal A_{\bullet}}$ to a generic level set of $\psi_{\beta}$ contains a transposition. 
\end{lemma}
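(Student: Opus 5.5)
The plan is to exhibit a point $y$ on the coordinate line $V=V_{\beta,c^\ast}$ over which $\pi_V$ is simply ramified, with a smooth witness $x$ on $X_V$. Then \Cref{lem:smooth-simplyramified-transp}, applied to the branched cover $\pi_V$ (a branched cover by \Cref{prop:restricted-is-branched-cover}, since non-triangularity of $\Abullet$ passes to $\mathcal A'_\bullet$), produces a transposition in $G_{\pi_V}$.

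The simplest route is to use the description from the proof of \Cref{prop:restricted-is-branched-cover}. There $X_V$ is the graph of the regular function $t(x)=-f_i(x)/x^\beta$ over the curve $C=\mathcal V^\times(f_1,\dots,\widehat{f_i},\dots,f_n)$. By Bertini/BKK genericity of $c^\ast$, $C$ is a smooth irreducible curve, so $X_V\cong C$ is smooth everywhere. Moreover $\pi_V$ is identified with the function $t\colon C\to\mathbb{C}$. A transposition therefore arises from any critical value $t_0$ of $t$ whose fibre contains exactly one simple critical point, i.e. a Morse point, and no other degeneracy.

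The steps, in order, are:
\begin{enumerate}
\item Show that $t$ is nonconstant of degree $d_\Abullet\ge 2$ on $C$. Since $C$ is irreducible and not rational-of-degree-one in this sense, $t$ must have ramification in the torus. This needs care, because ramification could a priori occur only at points of $C$ escaping to the boundary of the torus (solutions at infinity), where no local transposition is visible.
\item Show that, for generic $c^\ast$, the critical points of $t$ in the torus are nondegenerate and lie over distinct critical values.
\end{enumerate}

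For the second step I would vary the remaining coefficients of $f_i$ other than $c_\beta$. Adding $\lambda x^\alpha$ to $f_i$ for $\alpha\in\mathcal A_i\setminus\{\beta\}$ changes $t$ by $-\lambda x^{\alpha-\beta}$. Non-lacunarity guarantees enough such monomial characters to separate points and tangent directions on $C$. A standard incidence-dimension count then shows that coincidences (two critical points in one fibre, or a degenerate critical point) form a proper subvariety of the parameter space.

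The main obstacle is step one: guaranteeing that at least one simple critical point of $t$ lies in the torus rather than all ramification occurring at boundary points of a toric compactification of $C$. To handle this I would compute the degree of the ramification divisor by Riemann--Hurwitz on the smooth compactification $\overline C$, and compare it with the ramification contributed by the boundary points. The latter is read off from the tropical fan and face systems of $\Abullet$, and non-triangularity should prevent the boundary from absorbing all of it. If this comparison is unwieldy, the fallback is to use Esterov's description of the $\Abullet$-discriminant \cite{Est19}. For non-triangular, non-lacunary supports, the principal $\Abullet$-discriminant has a component whose generic point is a system with exactly one simple double root in the torus. A generic coordinate line $V$ meets this component transversely, since the component is not a cylinder in the $e_\beta$ direction because $c_\beta$ genuinely appears. \Cref{cor:restricted-mon-gp-transposition}, combined with smoothness of $X_\Abullet$, then gives the transposition directly.
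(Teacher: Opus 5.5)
Your fallback is essentially the paper's proof. The paper takes from the proof of \cite[Theorem 39]{STT23} that a generic coordinate line meets the $\Abullet$-discriminant transversely at a point over which $\pi_{\mathcal A_\bullet}$ is simply ramified. It notes that this discriminant lies in a codimension-one component of $\Delta$. It then applies \Cref{cor:restricted-mon-gp-transposition}, using smoothness of $\mathbb{C}^{\mathcal A_\bullet}$ and $V$; smoothness of $X_{\mathcal A_\bullet}$ is also needed, as you say. You instead take the simple double root from Esterov \cite{Est19} and argue transversality yourself.

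Reducing transversality to ``the component is not a cylinder in the $e_\beta$ direction'' is correct; it is the generic-smoothness step at the start of the proof of \Cref{prop:restriction-smoothness}. But ``$c_\beta$ genuinely appears'' is only asserted, and it is exactly the input the paper imports from \cite{STT23}. You can supply it as follows.
\begin{itemize}
\item By the computation of $N$ in the proof of \Cref{prop:restriction-smoothness}, the normal covector to the discriminant at a generic point with double root $x_0$ is $(\lambda_j x_0^{\alpha})_{j,\alpha}$. Here $\lambda$ spans the left kernel of $\partial_x\mathcal F_c(x_0)$.
\item Its $(i,\beta)$-entry vanishes only if $\lambda_i=0$, i.e.\ only if the gradients of the $f_j$, $j\neq i$, are dependent at $x_0$.
\item For fixed $x_0$, the logarithmic gradients of systems vanishing at $x_0$ range independently over the spans $L_j$ of differences in $\mathcal A_j$. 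Vectors $v_j\in L_j$, $j\neq i$, are dependent only in codimension at least $\min_I(\dim\mathcal A_I-|I|+1)$.
\item This minimum is at least $2$, since by essentialness and non-triangularity every subfamily of $(\mathcal A_j)_{j\neq i}$ has positive defect.
\end{itemize}
So such systems form a set of codimension at least two in $\mathbb{C}^{\mathcal A_\bullet}$. That set cannot contain a dense subset of the discriminant hypersurface, so $c_\beta$ does appear.

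Your primary, curve-based route, by contrast, is not a proof as written.

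Step one carries all the content and is left at ``should''. Its heuristic also invokes the wrong hypothesis. Take $n=1$, where non-triangularity is vacuous, with the lacunary support $\mathcal A_1=\{0,d\}$ and $\beta=0$. Then $t=-c_dx^d$ up to an additive constant, and all of its ramification sits at the boundary points $x=0,\infty$ of $C=\mathbb{C}^\times$; the monodromy is cyclic. Any Riemann--Hurwitz comparison must therefore use non-lacunarity to stop the boundary from absorbing everything.

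In step two, non-lacunarity concerns the lattice generated by all of $\mathcal A_\bullet$. The characters $x^{\alpha-\beta}$ you are allowed to vary generate only $\mathbb{Z}\mathcal A_i$, which may have positive index and, for $n\ge 3$, rank less than $n$. What you actually need is that the monomial map of $\mathcal A_i$, restricted to $C$, is birational onto its image and an immersion. That is a genuine claim about the generic curve $C$ and requires its own proof. Until both steps are supplied, your argument rests entirely on the fallback.
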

\begin{proof}
In the proof of \cite[Theorem 39]{STT23}, the authors show that a generic coordinate line meets the $\Abullet$-discriminant transversely at a point above which $\pi_\Abullet$ is simply ramified.  Moreover, the $\Abullet$-discriminant is contained within an irreducible component of $\Delta$ of codimension 1.  Since $\mathbb{C}^{\mathcal{A}_\bullet}$ and $V$ are smooth varieties, \Cref{cor:restricted-mon-gp-transposition} implies that $G_{\pi_{V}}$ contains a transposition.
\end{proof}

\begin{remark}
    The hypothesis of \Cref{lem:existence-of-transp-STT} may be relaxed from non-lacunary and non-triangular to being non-lacunary and \emph{dual effective}. 
    Esterov calls a tuple of supports \mydefit{dual effective} when its
    $\Abullet$-discriminant has a codimension-1 component \cite[Definition~3.14]{Est19}.  Moreover, \cite[Theorem~3.25]{Est19} describes a generic point of this discriminant as having a unique double root and shows that monodromy around a transverse curve induces a transposition. We are unaware of a clean combinatorial characterization of dual effectiveness. However, \cite[Corollary 3.23]{Est19} states a simple combinatorial condition which implies it: being non-lacunary, non-triangular, and having mixed volume at least two. This motivates our use of a simpler condition in \Cref{lem:existence-of-transp-STT}. For additional details and a discussion of these techniques, see also \cite[Theorem~4.12]{dissertation}.
\end{remark}

\begin{theorem}
\label{cor:sparse-restriction-full-symmetric}
    Let $\mathcal A_{\bullet}$ be  non-lacunary and non-triangular with $d_{\mathcal A_{\bullet}}\geq 2$ and let $V$ be a generic level set of $\psi_\beta$ for some $\beta \in \mathcal A_{\bullet}$. Then the monodromy group $G_{\pi_V}$ is the full symmetric group.
\end{theorem}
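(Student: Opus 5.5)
The plan is to apply \Cref{thm:iff-full-symmetric} to the branched cover $\pi = \pi_{\mathcal A_\bullet}\colon X_{\mathcal A_\bullet}\to\mathbb{C}^{\mathcal A_\bullet}$ with $\psi=\psi_\beta$. This requires three inputs: the global monodromy group $G_{\pi_{\mathcal A_\bullet}}$ is full symmetric, $\psi_\beta$ is $\pi_{\mathcal A_\bullet}$-generic, and the restricted group $G_{\pi_V}$ contains a transposition.

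\textbf{Global monodromy.} Since $\mathcal A_\bullet$ is non-lacunary and non-triangular with $d_{\mathcal A_\bullet}\ge 2$, \Cref{lem:est-uniform-monodromy} gives $G_{\pi_{\mathcal A_\bullet}}=S_{d_{\mathcal A_\bullet}}$.

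\textbf{Genericity of $\psi_\beta$.} Say $\beta\in\mathcal A_i$. Non-triangularity of $\mathcal A_\bullet$ means no proper subcollection has defect zero. By the remark following \Cref{prop:restricted-is-branched-cover}, this passes to $\mathcal A'_\bullet=(\mathcal A_1,\ldots,\widehat{\mathcal A_i},\ldots,\mathcal A_n)$, so $\mathcal A'_\bullet$ is non-triangular. Essentiality of $\mathcal A_\bullet$ follows from $d_{\mathcal A_\bullet}\ge 2>0$ via Minkowski's lemma. \Cref{prop:restricted-is-branched-cover} then shows that the restriction to a generic coordinate line $V=V_{\beta,c^\ast}$ is a branched cover of degree $d_{\mathcal A_\bullet}$. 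Hence $\psi_\beta$ is $\pi_{\mathcal A_\bullet}$-generic.

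\textbf{Transposition in $G_{\pi_V}$.} This is exactly \Cref{lem:existence-of-transp-STT}, whose hypotheses (non-triangular, non-lacunary, $\MV(\mathcal A_\bullet)\ge 2$) coincide with ours.

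\textbf{Conclusion.} With these three facts, \Cref{thm:iff-full-symmetric} applies: a generic level set $V$ has $G_{\pi_V}$ containing a transposition, so $G_{\pi_V}$ is full symmetric if and only if $G_{\pi_{\mathcal A_\bullet}}$ is. Since the latter holds, $G_{\pi_V}=S_{d_{\mathcal A_\bullet}}$.

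The only delicate point is matching the two notions of genericity. \Cref{thm:iff-full-symmetric} needs $w=\psi_\beta(c^\ast)$ generic in $\mathbb{C}^{\mathcal A_\bullet-\beta}$, and both \Cref{prop:restricted-is-branched-cover} and \Cref{lem:existence-of-transp-STT} hold for $c^\ast$ outside proper subvarieties. Since $\psi_\beta$ is a coordinate projection, a generic $c^\ast$ projects to a generic point. Intersecting the finitely many Zariski-open conditions therefore gives a single generic choice of $V$ on which all three ingredients hold simultaneously.
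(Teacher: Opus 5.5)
Your proposal is correct and follows the paper's proof essentially step for step: \Cref{lem:est-uniform-monodromy} gives $G_{\pi_{\mathcal A_\bullet}}=S_{d_{\mathcal A_\bullet}}$, \Cref{prop:restricted-is-branched-cover} makes $\pi_V$ a branched cover, \Cref{lem:existence-of-transp-STT} supplies the transposition, and \Cref{thm:iff-full-symmetric} concludes. The paper's extra appeal to \Cref{cor:uniformorimprimitive} is already contained in the proof of \Cref{thm:iff-full-symmetric}, and your additional checks (essentiality via Minkowski's lemma, non-triangularity of $\mathcal A'_\bullet$, and matching the genericity conditions) spell out details the paper leaves implicit.
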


\begin{proof}
By \Cref{lem:est-uniform-monodromy}, the monodromy group $G_{\pi_{\mathcal A_\bullet}}$ is the full symmetric group.  Since the coordinate line $V$ is a generic level set of the map $\psi_\beta$, \Cref{prop:restricted-is-branched-cover} implies that the restriction $\pi_{V}$ is a branched cover of degree $d_{\mathcal A_{\bullet}}$. \Cref{lem:existence-of-transp-STT} implies that $G_{\pi_{V}}$ contains a transposition, so by \Cref{cor:uniformorimprimitive}, the restricted monodromy group is a full wreath product. Finally, applying \Cref{thm:iff-full-symmetric}, we conclude that $G_{\pi_{V}}$ is also the full symmetric group.
\end{proof}

\subsection{Completion of the sparse trace test}\label{sec:completionsparsetrace}
Following the construction in \Cref{sec:generalizedtrace}, we let $c^\ast\in\C^{\cA_\bullet}-\Delta$, $\emptyset \neq P\subseteq \pi_{\mathcal A_{\bullet}}^{-1}(c^*)$, and 
    \begin{align*}
    {\Xi_P}: \Pi_1(\mathbb{C}^{\mathcal A_\bullet}-\Delta, c^\ast, \cdot) &\to \C^n \\
    [\tau] &\mapsto \Sigma(\sigma_{\tau}(P)).
    \end{align*}
Next, we fix $\beta \in \mathcal A_{\bullet}$ and study ${\Xi_{P,\pi_V}}$ where $V$ is the generic level set of $\psi_\beta$ through $c^\ast$  so that 
$$\Xi_{P,\pi_V}:\Pi_1(V-\Delta, c^\ast, \cdot) \to \C^n.$$
By \Cref{cor:sparse-restriction-full-symmetric} and
\Cref{thm:generalizedrestrictedtrace}, we complete the theoretical foundation of the sparse
trace test.

\begin{theorem}[Sparse trace test]
    \label{thm:sparse-trace-test}
    Let $\Abullet$ be a non-lacunary and non-triangular set of supports with $d_{\mathcal A_{\bullet}}\geq 2$. Let $c^\ast\in\mathbb{C}^{\mathcal A_\bullet}$ be generic and $\emptyset \neq  P\subseteq\pi_{\mathcal A_{\bullet}}^{-1}(c^*)$.  Fix $\beta\in\mathcal{A}_{\bullet}$.  Then  $ P=\pi_{\mathcal A_{\bullet}}^{-1}(c^*)$ if and only if $\Xi_{P,\pi_V}$ descends to a function on $V-\Delta$.
\end{theorem}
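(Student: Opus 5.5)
The plan is to deduce the statement directly from \Cref{thm:generalizedrestrictedtrace}, applied with $\pi=\pi_{\mathcal A_{\bullet}}$, $Y=\mathbb{C}^{\mathcal A_\bullet}$, $Z=\mathbb{C}^{\mathcal A_\bullet-\beta}$, $\psi=\psi_\beta$, $y^\ast=c^\ast$, and $V=V_{\beta,c^\ast}$, the coordinate line through $c^\ast$. For this we must check the hypotheses of that theorem in order.

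First, $\pi_{\mathcal A_\bullet}$ is a branched cover with $X_{\mathcal A_\bullet}\subseteq(\mathbb{C}^\times)^n\times\mathbb{C}^{\mathcal A_\bullet}$, which sits inside an affine space. The irreducibility was noted after \eqref{eq:sparseincidence}, and the degree $d_{\mathcal A_\bullet}$ comes from \Cref{lem:BKK}. The trace $\Sigma$ is taken in the $x$-coordinates only (values in $\mathbb{C}^n$). Since every point in a fibre has the same coefficient coordinate $c$, this differs from the full-coordinate trace only by the term $|P|\,c$, which depends only on the endpoint. Descent is therefore unaffected, as is the trace-invariance argument of \Cref{lem:sufficient_for_no_TIS}, because distinct fibre points have distinct $x$-coordinates. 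Second, $\psi_\beta$ is $\pi_{\mathcal A_\bullet}$-generic. This is \Cref{prop:restricted-is-branched-cover}: $\mathcal A_\bullet$ is non-triangular, so the truncated tuple $\mathcal A'_\bullet$ is non-triangular by the remark following that proposition, and $\mathcal A_\bullet$ is essential because $d_{\mathcal A_\bullet}\ge 2>0$.

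Third, $G_{\pi_{\mathcal A_\bullet}}$ is the full symmetric group by \Cref{lem:est-uniform-monodromy}, since $\mathcal A_\bullet$ is non-lacunary and non-triangular. Fourth, $G_{\pi_V}$ contains a transposition by \Cref{lem:existence-of-transp-STT}, which uses the same hypotheses together with $\MV(\mathcal A_\bullet)\ge 2$. Indeed, \Cref{cor:sparse-restriction-full-symmetric} already yields the stronger fact $G_{\pi_V}=S_{d_{\mathcal A_\bullet}}$. With these in hand, \Cref{thm:generalizedrestrictedtrace} gives that $\Xi_{P,\pi_V}$ descends to $\mathcal U\cap V=V-\Delta$ if and only if $P=\pi_{\mathcal A_\bullet}^{-1}(c^\ast)$.

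An alternative route avoids \Cref{thm:generalizedrestrictedtrace}. From $G_{\pi_V}=S_d$ (by \Cref{cor:sparse-restriction-full-symmetric}), one-swappability and \Cref{lem:sufficient_for_no_TIS} show that $\pi_V$ has no proper trace-invariant subsets. \Cref{prop:generalizedtracetest} applied to the branched cover $\pi_V$ then gives the equivalence.

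The main obstacle is a genericity bookkeeping issue. The statement fixes a generic $c^\ast$ and an \emph{arbitrary} $\beta$, so we must ensure that $c^\ast$ is simultaneously generic in the sense required for every $\beta$. That means $V_{\beta,c^\ast}$ must be a generic level set of $\psi_\beta$ (the property used in \Cref{prop:restricted-is-branched-cover} and \Cref{lem:existence-of-transp-STT}), and $c^\ast$ must also be a generic point of $V$ as needed in \Cref{prop:generalizedtracetest}. Since each condition excludes a proper algebraic subset of $\mathbb{C}^{\mathcal A_\bullet}$, or its preimage under $\psi_\beta$, and there are only finitely many $\beta$, we intersect these finitely many dense open conditions. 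This settles the issue.
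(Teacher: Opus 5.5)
Your proposal is correct and takes the paper's approach: the paper deduces this theorem directly from \Cref{cor:sparse-restriction-full-symmetric} together with \Cref{thm:generalizedrestrictedtrace}, whose hypotheses you verify with the same ingredients (\Cref{prop:restricted-is-branched-cover}, \Cref{lem:est-uniform-monodromy}, \Cref{lem:existence-of-transp-STT}). Your remarks on using only the $x$-coordinates of the trace and on choosing $c^\ast$ generic simultaneously for the finitely many $\beta$ are sound bookkeeping that the paper leaves implicit.
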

\begin{corollary}
\label{cor:sparse-trace-test-first-coordinate}
    Under the hypotheses of \Cref{thm:sparse-trace-test}, let
    $\Xi_{P,\pi_V} = (\Xi^{(1)}_{P,\pi_V}, \ldots,\Xi_{P,\pi_V}^{(n)})$. Then
    \[
        P=\pi_{\mathcal A_\bullet}^{-1}(c^\ast)
        \iff
        \Xi^{(i)}_{P,\pi_V}
        \text{ descends to a function on }V-\Delta\text{ for any }i\in[n].
    \]
\end{corollary}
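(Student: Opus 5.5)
The forward direction is immediate from \Cref{thm:sparse-trace-test}. If $P=\pi_{\mathcal A_\bullet}^{-1}(c^\ast)$, then the full vector-valued function $\Xi_{P,\pi_V}$ descends to $V-\Delta$, because every path permutes the whole fibre and so $\sigma_\tau(P)$ is always the full fibre over $\tau(1)$. Each coordinate $\Xi^{(i)}_{P,\pi_V}$ therefore descends, and this holds for every $i$.

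For the converse, fix some $i\in[n]$ for which $\Xi^{(i)}_{P,\pi_V}$ descends. The plan is to rerun the proof of \Cref{thm:generalizedrestrictedtrace} with $\Sigma$ replaced by its $i$-th coordinate $\Sigma^{(i)}$, which is the generalization described in the remark following that theorem. First, the proof of \Cref{prop:generalizedtracetest} goes through verbatim for $\Sigma^{(i)}$. So descent of $\Xi^{(i)}_{P,\pi_V}$ implies that $P$ is $\Sigma^{(i)}$-invariant under $G_{\pi_V,c^\ast}$, meaning $\Sigma^{(i)}(g(P))=\Sigma^{(i)}(P)$ for all $g$. By \Cref{cor:sparse-restriction-full-symmetric}, $G_{\pi_V}$ is the full symmetric group.

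Now suppose $P$ is a proper nonempty subset. We can then choose a transposition $g$ swapping some $p\in P$ with some $q\notin P$. Invariance forces $p^{(i)}=q^{(i)}$, the $i$-th coordinates of the two solutions. Running over all such pairs shows that every solution in the fibre would have the same $i$-th coordinate.

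The main obstacle is ruling this out. We need to show that, for generic $c^\ast$, the $d_{\mathcal A_\bullet}\ge 2$ solutions of $\mathcal F_{c^\ast}$ have pairwise distinct $x_i$-coordinates. I would argue by monodromy and irreducibility. The function $x_i$ on $X_{\mathcal A_\bullet}$ is a regular function. If two points of every generic fibre shared the same $x_i$, then by the full symmetric monodromy of $G_{\pi_{\mathcal A_\bullet}}$ (\Cref{lem:est-uniform-monodromy}), every pair in every generic fibre would share it. In that case $x_i$ would be constant on generic fibres and hence would descend to a rational function of $c$.

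This descended function is impossible, and the torus action exhibits why. Rescaling $x_i\mapsto \lambda x_i$ corresponds to rescaling the coefficients $c_\alpha\mapsto \lambda^{-\alpha_i}c_\alpha$. The mixed volume is positive, and non-lacunarity (together with the $\lambda$-family) shows that some $x^\alpha$ in the supports actually depends on $x_i$. So the fibre over the rescaled $c$ is the rescaled fibre. Then $x_i(c)$ would be a rational function, homogeneous of weight one under this action, whose value is the common $i$-th coordinate. Its existence would force a solution of $\mathcal F_{c^\ast}$ to be rational in $c^\ast$, contradicting $d_{\mathcal A_\bullet}\ge2$ together with the transitivity of monodromy.

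Equivalently, and more directly: the projection of $X_{\mathcal A_\bullet}$ to $(x_i,c)$ would then have degree $d_{\mathcal A_\bullet}$ over its image. One can, however, exhibit a single system where the $x_i$-coordinates of the solutions are distinct. Start from any generic system and apply a generic torus translation $x\mapsto x\cdot t$, compensated in the coefficients; this cannot separate coordinates, so instead choose $c$ near a binomial-like degenerate system with known distinct roots. Distinctness is an open condition, so it then holds generically. I expect making this separation rigorous to be the hard step. The rest follows from the earlier results.
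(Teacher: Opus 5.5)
Your reduction is the same as the paper's. You apply the generalized trace test to the single coordinate $\Sigma^{(i)}$, invoke \Cref{cor:sparse-restriction-full-symmetric} to get $G_{\pi_V}=S_{d_{\Abullet}}$, and use a transposition exchanging $p\in P$ with $q\notin P$ to conclude that all solutions would share their $i$-th coordinate. The paper finishes by citing \cite[Lemma~42]{STT23}: the $i$-th coordinates of the points in a generic fibre are distinct.

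\textbf{The gap.} The gap is your attempt to prove this distinctness yourself. The torus-weight argument gives no contradiction. A weight-one rational function $x_i(c)$ can exist, and knowing one coordinate as a rational function of $c$ does not make a solution rational once $n\ge 2$.

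\textbf{A counterexample to your argument.} Take $\cA_1=\{(0,0),(1,0)\}$ and $\cA_2=\{(0,0),(0,1),(0,2)\}$, so $f_1=a+bx_1$ and $f_2$ is a quadratic in $x_2$. This support is essential and non-lacunary, $d_{\Abullet}=2$, and the monodromy is $S_2$. Yet both solutions have $x_1=-a/b$, which has weight one under your rescaling, while $x_2$ is not rational in $c$. Every ingredient you use holds here, so the argument cannot be correct. What it never uses is non-triangularity, which this example violates.

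Your fallback (perturbing a binomial-like system) is only a sketch, and any such construction must use non-triangularity for the same reason. Also, the roots of a binomial system $x^M=b$ form a coset of a finite subgroup of the torus, whose $i$-th coordinates need not be distinct (e.g.\ $M=2I$). So a naive degeneration does not separate them.

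\textbf{How to close it.} Either cite \cite[Lemma~42]{STT23}, or make a dimension count as follows.
\begin{enumerate}
\item Let $D_i$ be the set of triples $(x,x',c)$ with $\mathcal F_c(x)=\mathcal F_c(x')=0$, $x\neq x'$, and $x_i=x'_i$. Put $y=x/x'$.
\item Let $J$ be the set of $j$ with $y^\gamma=1$ for all $\gamma\in\Z\cA_j$, i.e.\ the blocks where the two evaluation vectors are proportional.
\item The fibre of $D_i$ over $(x,x')$ is a linear space of codimension $2n-|J|$ in $\C^{\Abullet}$.
\item The pairs with a given $J$ form a set of dimension at most $2n-\operatorname{rank}(\Z\cA_J+\Z e_i)$. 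Hence that stratum of $D_i$ has dimension at most $|\Abullet|-(\operatorname{rank}(\Z\cA_J+\Z e_i)-|J|)$.
\item The deficit $\operatorname{rank}(\Z\cA_J+\Z e_i)-|J|$ is $1$ for $J=\emptyset$. It is at least $1$ for $\emptyset\neq J\subsetneq[n]$, because non-triangularity gives $\dim\cA_J\ge|J|+1$. For $J=[n]$, non-lacunarity forces $y=1$, i.e.\ $x=x'$, which is excluded.
\end{enumerate}
Hence $D_i$ does not dominate $\C^{\Abullet}$. For generic $c^\ast$ the $i$-th coordinates of the solutions are therefore pairwise distinct, which completes your argument.
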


\begin{proof}
    Let $\Sigma=(\Sigma^{(1)},\dots,\Sigma^{(n)})$.
    By the generalized trace test applied to $\Sigma^{(1)}$,
    $\Xi^{(1)}_{P,\pi_V}$ descends if and only if $P$ is
    $\Sigma^{(1)}$-trace-invariant, meaning \[
    \Sigma^{(1)}(g(P))=\Sigma^{(1)}(P)
    \qquad
    \text{for all }g\in G_{\pi,y^\ast}.
\]

    By \Cref{cor:sparse-restriction-full-symmetric},
    $G_{\pi_V}$ is the full symmetric group, and by
    \cite[Lemma~42]{STT23}, the first coordinates of the points in a
    generic fibre are distinct. Consequently, $\pi_V$ admits no proper
    $\Sigma^{(1)}$-trace-invariant subsets. Hence, the only nonempty
    subset for which $\Xi^{(1)}_{P,\pi_V}$ descends is the entire fibre.
\end{proof}

The main consequence of \Cref{thm:sparse-trace-test} is that completeness of the solution set to a sparse polynomial system (satisfying the above hypotheses) can be checked by changing \textit{any} coefficient and observing the behavior of the zeros. That any coefficient may be used theoretically \textit{completes} the sparse trace test \cite[Algorithm 1]{STT23}, which previously imposed conditions on the moving coefficients. An effective version of \Cref{thm:sparse-trace-test} requires the ability to ascertain whether $\Xi_{P,\pi_V}$ is a function on the line $V$. The sparse trace test (\Cref{thm:sparse-trace-test}) can be brought closer to the classical trace test (\Cref{prop:classicaltracetest} and \cite{Som02,Ley18}) by choosing $\beta$ to satisfy conditions \cite[Lemma 10]{STT23} which give bounds on the degree of $\Xi_{P,\pi_V}$.  Once degree bounds are fixed, rational interpolation becomes an effective approach.

\begin{example}\label{ex:trace}
    Let $\mathcal{F}_{c} = (f_{c},g_{c})$ be the family of bivariate sparse polynomial systems supported on $cA_f=\{(0,0),(1,0),(0,1),(1,1),(2,1),(1,2)\}$ and $\cA_g=\{(0,0),(1,0),(0,1),(1,1)\}$.  By \Cref{lem:BKK}, for a generic choice of coefficients, this system has four solutions.  We focus on the restricted branched cover $X_{\beta,c^\ast}$ corresponding to $f_{c^\ast} = -1 + 3x - 3y + xy + x^2y - 2xy^2$ and $g_{c^\ast}=1 + x + y - xy$ with $\beta=(1,0)\in\cA_g$.  Via a Gr\"obner basis calculation, we observe that the trace of the $x$- and $y$-coordinates of complete solution sets are functions of the coefficient $c_\beta$, as follows
        \[
        \Sigma_x(\mathcal{F}_{c^\ast+(c_\beta-c_\beta^\ast)e_\beta}) = 2c_\beta - \frac{4}{c_\beta} \hspace{3cm}
        \Sigma_y(\mathcal{F}_{c^\ast+(c_\beta-c_\beta^\ast)e_\beta}) = 2c_\beta - \frac{3}{2}.
    \]
    See \Cref{fig:sparsetracetest} for additional details.

    \begin{figure}[ht]
    \centering
    \begin{subfigure}{0.3\textwidth}
        \centering   \includegraphics[width=\linewidth]{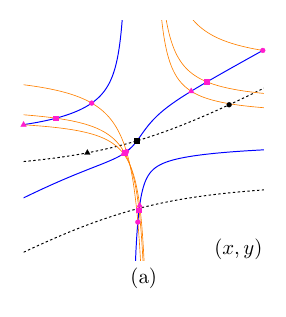}
    \end{subfigure}
    \begin{subfigure}{0.3\textwidth}
        \centering   \includegraphics[width=\linewidth]{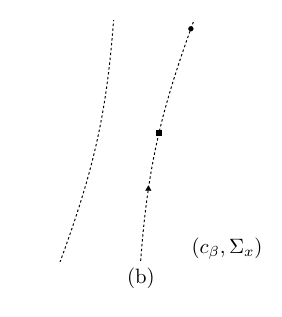}
    \end{subfigure}
    \begin{subfigure}{0.3\textwidth}
        \centering   \includegraphics[width=\linewidth]{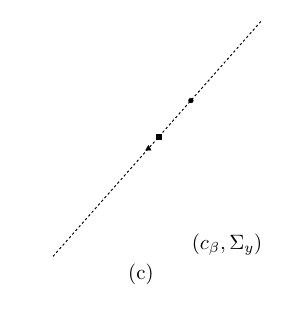}
    \end{subfigure}
    \caption{The sparse trace test for the system $\mathcal{F}_{c^\ast+(c_\beta-c_\beta^\ast)e_\beta}=(f_{c^\ast},g_{c^\ast+(c_\beta-c_\beta^\ast)e_\beta})=(-1 + 3x - 3y + xy + x^2y - 2xy^2, 1 + c_\beta x + y - xy)$, as in \Cref{ex:trace}.  In (a), $f_{c^\ast}=0$ is depicted in the thicker blue curves and three curves for $g_{c^\ast+(c_\beta-c_\beta^\ast)e_\beta}=0$ for different values of $c_\beta$ are displayed in the thinner orange curves.  The intersections of the curves are marked in magenta, where the shape of each marker corresponds to a value of $c_\beta$.  The traces of these quadruples of points appear on the dashed curve, with matching marker in black.  The relationship between $c_\beta$ and $\Sigma_x$ or $\Sigma_y$ is illustrated in (b) and (c), respectively.  By inspection, we see that both $\Sigma_x$ and $\Sigma_y$ are functions of $c_\beta$.}
    \label{fig:sparsetracetest}
\end{figure}

On the other hand, the trace of the $x$- and $y$-coordinates on triples of solutions (that is, incomplete solution sets) is not a function of the coefficient $c_\beta$.  In particular, briefly let $\Sigma_x$ be the $x$-coordinate of the sum of three distinct solutions to $\mathcal{F}_{c^\ast+(c_\beta-c_\beta^\ast)e_\beta}$ and $\Sigma_y$ be the $y$-coordinate of the sum of these three solutions.  Then $\Sigma_x$ and $c_\beta$ are related by the vanishing of the following polynomial
\begin{multline*}
    8c_\beta^6\Sigma_x-12c_\beta^5\Sigma_x^2+6c_\beta^4\Sigma_x^3-c_\beta^3\Sigma_x^4+32c_\beta^5-80c_\beta^4\Sigma_x+56c_\beta^3\Sigma_x^2-12c_\beta^2\Sigma_x^3+22c_\beta^4-25c_\beta^3\Sigma_x\\
    +7c_\beta^2\Sigma_x^2
    -126c_\beta^3 + 159c_\beta^2\Sigma_x-48c_\beta \Sigma_x^2-102c_\beta^2+56c_\beta \Sigma_x+124c_\beta-64\Sigma_x+112.
\end{multline*}
Similarly, $\Sigma_y$ and $c_\beta$ are related by the vanishing of the following polynomial
\begin{multline*}
8c_\beta^3\Sigma_y-24c_\beta^2\Sigma_y^2+24c_\beta \Sigma_y^3-8\Sigma_y^4+16c_\beta^3-80c_\beta^2\Sigma_y+100c_\beta \Sigma_y^2\\-36\Sigma_y^3-58c_\beta^2+106c_\beta \Sigma_y-34\Sigma_y^2+19c_\beta+17\Sigma_y+21.
\end{multline*}
Since these polynomials are square-free and not linear in $\Sigma_x$ or $\Sigma_y$, respectively, this indicates that neither $\Sigma_x$ nor $\Sigma_y$ is a function of $c$.  See \Cref{fig:sparsetracetest2} for additional evidence that these are not functions of $c_\beta$.

\begin{figure}[ht]
    \centering
    \begin{subfigure}{0.3\textwidth}
        \centering   \includegraphics[width=\linewidth]{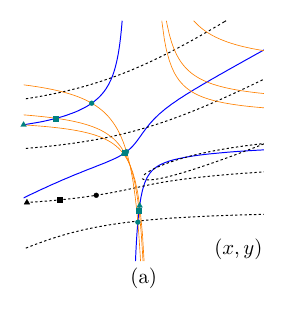}
    \end{subfigure}
    \begin{subfigure}{0.3\textwidth}
        \centering   \includegraphics[width=\linewidth]{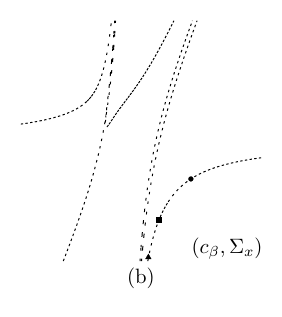}
    \end{subfigure}
    \begin{subfigure}{0.3\textwidth}
        \centering   \includegraphics[width=\linewidth]{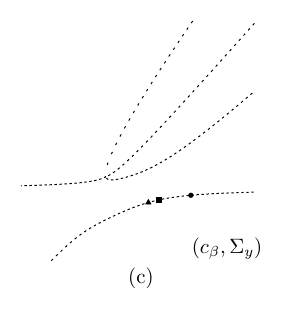}
    \end{subfigure}
    \caption{A depiction of the sparse trace test for an incomplete solution set of the system $\mathcal{F}_{c^\ast+(c_\beta-c_\beta^\ast)e_\beta}=(f_{c^\ast},g_{c^\ast+(c_\beta-c_\beta^\ast)e_\beta})=(-1 + 3x - 3y + xy + x^2y - 2xy^2, 1 + c_\beta x + y - xy)$, as in \Cref{ex:trace}.  In (a), $f_{c^\ast}=0$ is depicted in the thicker blue curves and three curves for $g_{c^\ast+(c_\beta-c_\beta^\ast)e_\beta}=0$ for different values of $c_\beta$ are displayed in the thinner orange curves.  Incomplete sets consisting of three intersection points are marked in teal, where the shape of each marker corresponds to a value of $c_\beta$.  The traces of these triples of points appear on the dashed curve, with matching marker in black.  The relationship between $c_\beta$ and $\Sigma_x$ or $\Sigma_y$, respectively, is illustrated in (b) and (c).  By inspection, we see that neither $\Sigma_x$ nor $\Sigma_y$ is a function of $c_\beta$.}
    \label{fig:sparsetracetest2}
\end{figure}

\end{example}

We propose an algorithm, \Cref{alg:sparse-trace-test}, as a version of the sparse trace test using interpolation. The algorithm requires prior knowledge of the maximum degree dependence of the trace on $c_\beta$, which can be found as follows. When the support element $\beta$, whose coefficient $c_\beta$ is allowed to vary, is in the interior of the convex hull of the support, the trace as a function of $c_\beta$ is a polynomial with bounded degree, see \cite[Theorem 25, Remark 26]{STT23} for specific degree bounds. When $\beta$ is on the boundary of the convex hull, the trace is a rational function, but degree bounds for the numerator and denominator are similarly guaranteed. By \cite[Chapter 8, Proposition 1.6]{GKZ94}, the degree of the resultant is bounded by a mixed volume calculation. Adapting this result to a hidden variable resultant, as is used to write the trace function in \cite{STT23}, requires an additional mixed volume factor. 
	\begin{algorithm}
		\caption{Sparse trace test (interpolation)}
		\label{alg:sparse-trace-test}
		\begin{algorithmic}[1]
			\Require
			\Bullet{$\Abullet$ in $\Z^n$, an essential, non-lacunary, non-triangular collection of supports}
            
			\Bullet{ $\mathcal{F} \in \C^{\Abullet}$, a generic system with coefficients $c^\ast$}
			\Bullet{$\emptyset \neq {P} \subseteq \V^\times(\mathcal{F})$}
			
            \Bullet{$\beta\in \Abullet$}
            \Bullet{$N$ and $D$, maximum degrees for the numerator and denominator of the trace function $\Sigma^{(1)}$}
			\Ensure
			\Bullet{if ${P} = \V^\times(\mathcal{F})$, then \texttt{pass}, else \texttt{fail}}

            \State Choose generic $c_{\beta_1},\dots, c_{\beta_{N+D+2}}\in \C$.

            \State Define the paths $[\tau_i]\in
\Pi_1(V-\Delta,c^\ast,c^\ast+(c_{\beta_i}-c_\beta^\ast)e_\beta)$ as $\tau_i \vcentcolon= c^*+t(c_{\beta_i}-c_\beta^\ast)e_\beta$.
            
			\State Use homotopy continuation to follow lifts of $\tau_i$ from ${P}$ to $\sigma_{\tau_i}(P)$ (see \Cref{sec:generalizedtrace}). 
            
			\State Compute $\Sigma^{(1)}(\sigma_{\tau_i}(P))$ for all $i$.
            
			\If{there exists a  rational function of numerator/denominator degrees at most $N$ and $D$ respectively  which fits $\{(c_{\beta_i},\Sigma^{(1)}(\sigma_{\tau_i}(P)))\}$,}
				\State \Return \texttt{pass}
			\Else
				\State \Return \texttt{fail}
			\EndIf
		\end{algorithmic}
	\end{algorithm}
\begin{theorem}
    \Cref{alg:sparse-trace-test} returns \texttt{pass} if and only if
    $P$ is complete.
    \label{thm:alg-1-correct}
\end{theorem}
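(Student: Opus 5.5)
The plan is to prove the two directions separately. Both rest on \Cref{cor:sparse-trace-test-first-coordinate}, which reduces completeness of $P$ to whether $\Xi^{(1)}_{P,\pi_V}$ descends to a function on $V-\Delta$. The second ingredient is the degree bounds $N$ and $D$, which convert that descent into a finite interpolation condition.

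For the forward direction, suppose $P=\pi_{\mathcal A_\bullet}^{-1}(c^\ast)$. Then $\sigma_{\tau_i}(P)$ is the entire fibre over $c^\ast+(c_{\beta_i}-c^\ast_\beta)e_\beta$, whatever the path. Its first-coordinate trace is therefore the value at $c_{\beta_i}$ of a single-valued function $\xi^{(1)}_P$ on $V-\Delta$. This function is the symmetric function of the roots, so it lies in $\mathbb C(c_\beta)$. By the hidden-variable resultant argument (\cite[Theorem 25]{STT23} together with \cite[Chapter 8, Proposition 1.6]{GKZ94}), its numerator and denominator have degrees at most $N$ and $D$. Consequently the $N+D+2$ sample points are fit by a rational function with these degree bounds, and the algorithm returns \texttt{pass}.

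For the converse, suppose the algorithm returns \texttt{pass}, so some $r=p/q$ with $\deg p\le N$ and $\deg q\le D$ fits all the samples. We must show $P$ is complete. Assume instead that $P$ is proper. By \Cref{cor:sparse-trace-test-first-coordinate}, $\Xi^{(1)}_{P,\pi_V}$ does not descend. Because $G_{\pi_V}=S_d$ by \Cref{cor:sparse-restriction-full-symmetric}, the analytic continuation of $s(c_\beta)=\Sigma^{(1)}(P_{c_\beta})$ is an algebraic function whose conjugates are the traces $\Sigma^{(1)}(Q)$ over all $|P|$-subsets $Q$ of the fibre. Since the first coordinates in a generic fibre are distinct \cite[Lemma~42]{STT23}, at least two of these conjugates differ, so $s$ is not rational. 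The obstacle is to show that a genuinely multivalued algebraic function cannot agree with a rational function of bounded degree at $N+D+2$ generic points. Here I would use genericity. The minimal polynomial $F(c_\beta,\Sigma)$ of $s$ is irreducible and has degree at least $2$ in $\Sigma$. The branch $s$ equals $r$ at $c_{\beta_i}$ exactly when $F(c_{\beta_i},r(c_{\beta_i}))=0$. Now fix the first $N+D+1$ generic points. Generically the interpolant through them, if one exists, is determined by the sampled branch values. The last generic sample $c_{\beta_{N+D+2}}$ then satisfies $q\,s-p=0$ only on a proper algebraic subset, since $qs-p$ is a nonzero algebraic function. This relies on $s\neq r$, which holds because $s$ is irrational.

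One technical point is that the paths $\tau_i$ are straight segments, so they yield specific branch values rather than arbitrary ones. The set of $(c_{\beta_1},\dots,c_{\beta_{N+D+2}})$ at which a spurious fit occurs is contained in a proper analytic, indeed semialgebraic, subset of $\mathbb C^{N+D+2}$. This is because the fit condition is a vanishing of a determinant built from the sampled values, and that determinant is a nonzero analytic function on the simply connected domain of valid choices. Hence generic choices avoid it, the algorithm returns \texttt{fail}, and the contradiction completes the proof.
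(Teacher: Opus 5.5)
Your proposal is correct and follows the paper's proof: both directions go through \Cref{cor:sparse-trace-test-first-coordinate}, with the forward direction using the assumed bounds $N,D$, and the converse resting on the genericity of the sample points, which the paper invokes in one sentence and you actually justify. The one claim you assert without proof is that the first $N+D+1$ samples generically determine a unique interpolant, which your last-step argument needs in order to show the determinant is not identically zero; it follows by running that same irrationality argument row by row, since if adding a generic row did not raise the rank, some nonzero kernel vector $(p,q)$ of the earlier rows would satisfy $p=sq$ on the star-shaped domain, forcing $s$ to be rational.
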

\begin{proof}
    Suppose first that $P$ is complete. By \Cref{cor:sparse-trace-test-first-coordinate}, $\Xi^{(1)}_{P,\pi_V}$ descends to a function on $V-\Delta$.
    By the assumed degree bounds, this function is rational in $c_\beta$ with numerator degree at most $N$ and denominator degree at most $D$.  Therefore, the trace values computed in \Cref{alg:sparse-trace-test} fit such a rational function, and the algorithm returns \texttt{pass}.

    Conversely, suppose that the algorithm returns \texttt{pass}. Then the $N+D+2$ generic trace evaluations fit a rational function with numerator degree at most $N$ and denominator degree at most $D$. By the genericity of the interpolation points, this implies that $\Xi^{(1)}_{P,\pi_V}$ descends to such a rational function on $V-\Delta$. By \Cref{cor:sparse-trace-test-first-coordinate}, $P$ is complete.
\end{proof}

\appendix

\section{Fibre powers}\label{sec:fiberbranched} 
We provide explicit constructions to illustrate the relationship between fibre powers and decomposable branched covers.  In particular, our goal is to give a constructive version of the nonconstructive statement in \Cref{lem:imprimitive_iff_decomposable}.  Although the results in this section are not required for understanding the rest of this paper, the explicit constructions in this section provide an alternate view of our results.

Suppose that $\pi:X\rightarrow Y$ is a branched cover of degree $d$, then the \mydefit{$l$-th fibre power}
\[
\mydef{X^{(l)}} \vcentcolon= \{(x_{i_1},\ldots,x_{i_l};y) \mid x_{i_j} \in \pi^{-1}(y) \text{ for all }j=1,\ldots,l\}
\]
consists of  ordered $l$-tuples of elements in a fibre over a common base point.  The map $\pi^{(l)}: X^{(l)} \to Y$ is the projection onto the last coordinate, and it is dominant of finite degree $d^l$. However, the map $\pi^{(l)}$ is not a branched cover since $X^{(l)}$ is not irreducible. In fact, there are several components contained in the \mydefit{big diagonal}
\[
\mydef{\textrm{diag}\left(X^{(l)}\right)} \vcentcolon= \{(x_{i_1},\ldots,x_{i_l};y) \mid i_j  = i_{j'} \text{ for some }j\neq j'\} \subseteq X^{(l)}.
\]
We focus on components of $X^{(l)}$ which are not contained in the big diagonal.  In the following constructions, we slightly shrink $\cU$ to avoid singularities of $X^{(l)}$.  In particular, we define
\[
\mydef{\cU^{(l)}}\vcentcolon=\cU-\pi^{(l)}(\textrm{sing}(X^{(l)})),
\]
where $\textrm{sing}(X^{(l)})$ denotes the singular subvariety of $X^{(l)}$.  Since $X^{(l)}$ and $Y$ are the same dimension and the singular subvariety is a proper subvariety, $\cU^{(l)}$ is nonempty.  Similarly, we define $\mydef{\Delta^{(l)}}\vcentcolon=Y-\cU^{(l)}$.  With this setup, any point $x_I = (x_{i_1},\ldots,x_{i_l})$ of $l$ distinct elements of the fibre $\pi^{-1}(u)$ with $u\in\cU^{(l)}$ is necessarily in a unique irreducible component of $X^{(l)}$ that is not contained in the big diagonal.

Given a path $\tau \in \Pi_1(\mathcal U^{(l)},u,u')$ and an ordered subset $x_I=(x_{i_1},\dots,x_{i_l})$ of points in $X_u$, there are two natural ways to interpret
\[
\sigma_\tau(x_I).
\]
The first way is to lift $\tau$ to a path $x_I(t)$ in $X^{(l)}$ which starts at $x_I$.  Then, we may interpret $\sigma_\tau(x_I)$ as $x_I(1)$.  Since $\tau$ is a path in $\mathcal U^{(1)}$, this lift is uniquely defined and remains in an irreducible component of $X^{(l)}$.  The second way is to lift $\tau$ to the $l$ paths $\{x_{i_1}(t),\dots,x_{i_l}(t)\}$ such that $x_{i_j}(0)=x_{i_j}$.  Then, we may interpret $\sigma_\tau(x_I)$ as $(x_{i_1}(1),\dots,x_{i_l}(1);u')$.  Briefly, we define $\mydef{\pi_j}:X^{(l)}\rightarrow X$ to be the projection map from points of $X^{(l)}$ to their $j$-th entry. Then, $\pi_j(x_I(t))$ is a path in $X$ which starts at $x_{i_j}$ and maps onto $\tau$ by $\pi$. By uniqueness of lifts, we must have that $\pi_j(x_I(t))=x_{i_j}(t)$.  Hence, the two interpretations of $\sigma_\tau(x_I)$ coincide and may be used interchangeably.

\begin{lemma}
\label{lem:groupoid_on_fibre_power}
Let $\pi:X \to Y$ be a branched cover of degree $d$ and $\hat{X}$ be a component of $X^{(l)}$ not supported on the big diagonal such that $\pi^{(l)}(\hat{X})\not\subseteq \Delta^{(l)}$. The map $\pi^{(l)}:\hat{X} \to Y$ is a branched cover, and for any pair of points $(x_I;u)$ and $(x_J;u')$ in $\hat{X}\cap\left(\pi^{(l)}\right)^{-1}(\cU^{(l)})$, there exists $[\tau]\in \Pi_1(\mathcal U^{(l)},u,u')$ so that $\sigma_{\tau}(x_I) = x_J$, that is, $\sigma_{\tau}(x_{i_s}) = x_{j_s}$ for all $s=1,\ldots,l$.
\end{lemma}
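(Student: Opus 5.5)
The proof has two parts. First we show that $\pi^{(l)}|_{\hat X}$ is a branched cover. Then we produce the required path by the same argument used for transitivity of $G_\pi$, run inside $\hat X$ instead of $X$.

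\emph{Branched cover.} $\hat X$ is an irreducible component of $X^{(l)}$, so it is an irreducible variety. Over $\cU$ the map $\pi^{(l)}$ is a topological covering of degree $d^l$, because it is the $l$-fold fibre product of the covering $\pi|_{\pi^{-1}(\cU)}$. Hence $X^{(l)}\cap(\pi^{(l)})^{-1}(\cU)$ has pure dimension $\dim Y$.

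Since $\pi^{(l)}(\hat X)\not\subseteq\Delta^{(l)}$, the component $\hat X$ meets $(\pi^{(l)})^{-1}(\cU^{(l)})$. Over $\cU^{(l)}$ the variety $X^{(l)}$ is smooth. Its irreducible components are therefore disjoint there, so they are unions of connected components of the covering space. Consequently $\hat X^\circ\vcentcolon=\hat X\cap(\pi^{(l)})^{-1}(\cU^{(l)})$ is open and closed in the covering space over $\cU^{(l)}$. The restriction $\hat X^\circ\to\cU^{(l)}$ is then itself a covering map, of some degree $\hat d\le d^l$; its surjectivity uses that $\cU^{(l)}$ is connected. It follows that $\dim\hat X=\dim Y$ and that $\pi^{(l)}|_{\hat X}$ is dominant. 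Thus $\pi^{(l)}|_{\hat X}$ is a branched cover.

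\emph{Path.} $\hat X^\circ$ is a Zariski open dense subset of the irreducible variety $\hat X$, and it is smooth. Its complement has real codimension at least $2$, so $\hat X^\circ$ is path connected, by the same reasoning as in the proof that $G_\pi$ is transitive. Choose a path $\hat\tau:[0,1]\to\hat X^\circ$ from $(x_I;u)$ to $(x_J;u')$, and set $\tau=\pi^{(l)}\circ\hat\tau$, a path in $\cU^{(l)}$ from $u$ to $u'$.

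Because $\hat X^\circ\to\cU^{(l)}$ is a covering, $\hat\tau$ is the unique lift of $\tau$ starting at $x_I$. The first interpretation of $\sigma_\tau(x_I)$ from the discussion preceding the lemma therefore gives $\sigma_\tau(x_I)=\hat\tau(1)=x_J$. That discussion also shows this agrees with the componentwise lift, via $\pi_s(\hat\tau(t))=x_{i_s}(t)$. Hence $\sigma_\tau(x_{i_s})=x_{j_s}$ for every $s$.

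\emph{Main obstacle.} The delicate step is the path connectedness of $\hat X^\circ$. We need that removing a proper Zariski-closed subset from an irreducible complex variety leaves a connected set. This is standard: the smooth locus of an irreducible variety is connected, and removing a proper subvariety from a connected complex manifold keeps it connected. A second point needing care is that $\hat X^\circ$ really is a union of sheets of the covering over $\cU^{(l)}$, so that lifts stay inside $\hat X$. This is exactly where the shrinking from $\cU$ to $\cU^{(l)}$, removing the images of singular points where components could meet, is used.
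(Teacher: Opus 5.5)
Your argument is correct. The path-construction half matches the paper's proof: take a path in $\hat X\cap(\pi^{(l)})^{-1}(\cU^{(l)})$ from $(x_I;u)$ to $(x_J;u')$ and project it. The only addition is that you justify why such a path exists, via path-connectedness of a Zariski-open dense subset of an irreducible variety, which the paper uses silently.

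The branched-cover half is argued differently. The paper fixes $x_I\in\hat X$ and takes a path in $\pi^{-1}(\cU^{(l)})\subseteq X$ from $x_{i_1}$ to an arbitrary point. It lifts the projection of that path to $X^{(l)}$ starting at $x_I$, and notes that the lift stays in $\hat X$ because it avoids $\textrm{sing}(X^{(l)})$. Reading off the first coordinate then gives $\pi_1(\hat X)\supseteq\pi^{-1}(\cU^{(l)})$, so $\pi^{(l)}=\pi\circ\pi_1$ is dominant, and finite degree follows from the $d^l$ bound.

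You instead observe that components of $X^{(l)}$ cannot meet at smooth points. Hence $\hat X\cap(\pi^{(l)})^{-1}(\cU^{(l)})$ is open and closed in the covering over $\cU^{(l)}$, so it is a union of sheets and a covering in its own right, surjective because $\cU^{(l)}$ is connected.

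Both routes rest on the same fact, that components are disjoint over $\cU^{(l)}$. Yours extracts $\dim\hat X=\dim Y$, dominance and finite degree in one stroke, relying only on connectedness of $\cU^{(l)}$ in $Y$. The paper's route relies on connectedness of $\pi^{-1}(\cU^{(l)})$ in $X$. It also gives the unneeded by-product that every point of $X$ over $\cU^{(l)}$ occurs as a first coordinate of a point of $\hat X$.
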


 \begin{proof}
Since $\hat{X}$ is irreducible, we must show that $\pi^{(l)}:\hat{X} \to Y$ is dominant and of finite degree.  Let $x_I=(x_{i_1},\ldots,x_{i_l};u)\in\hat{X}\cap\left(\pi^{(l)}\right)^{-1}(\cU^{(l)})$.  Since $X^{(l)}$ is smooth at $x_I$, $x_I$ is in exactly one irreducible component, $\hat{X}$.  Moreover, since $\hat{X}$ is not supported on the diagonal, $x_I$ is not in the diagonal, that is the $x_{i_j}$'s are distinct.

Suppose that $x_J=(x_{j_1},\dots,x_{j_l})\in \hat{X}\cap\pi^{-1}(\cU^{(l)})$.  Since $X$ is irreducible, there is some path $\gamma$ in $\pi^{-1}(U^{(l)})$ connecting $x_{i_1}$ and $x_{j_1}$.  Let $\tau$ be the projection of $\gamma$ to $Y$ and $x_I(t)$ be the lift of $\tau$ to $X^{(l)}$ starting at $x_I$.  By construction, since $x_I(t)$ avoids the singular set of $X^{(l)}$, it remains in the single component $\hat{X}$.  Using the projection onto the first entry, we observe that the path $\pi_1(x_I(t))$ in $X$ is a lift of $\tau$ starting at $x_{i_1}$. By uniqueness of paths, $\pi_1(x_I(t))=\gamma(t)$, so $\pi_1(x_I(1))=x_{j_1}$.  Therefore, $\pi_1:\hat{X}\rightarrow X$ is dominant as its image includes the large open set $\pi^{-1}(\cU^{(l)})$.  Hence $\pi^{(l)}=\pi\circ\pi_1$ is dominant as it is the composition of dominant maps.  In addition, since $\pi^{(l)}(\hat{X})\supseteq \cU^{(l)}$, and over any point of $\cU^{(l)}$ there are $d^l$ preimages, $\pi^{(l)}:X\rightarrow Y$ is of finite degree.

To see that there is an element $[\tau]$ of the fundamental groupoid which induces the map $(x_I;u) \mapsto (x_J;u')$, take any path $\gamma$ in $\hat{X}\cap \left(\pi^{(l)}\right)^{-1}(\cU^{(l)})$ from $(x_I;u)$ to $ (x_J;u')$.  Then $\tau$ is the image of this path under $\pi^{(l)}$.
\end{proof}

We now use this lemma to study the behavior of block systems within fibre products.  In the following two results, we translate the properties of block systems to the case of branched covers.

\begin{corollary}\label{cor:blocks}
    Let $\pi:X\rightarrow Y$ be a branched cover of degree $d$, and assume that $G_\pi\lneq S_d$ contains a transposition.  Let $u,u'\in\cU$. Suppose that $B=\{x_{i_1},\dots,x_{i_k}\}$ is a block of the minimal block system of $G_{\pi,u}$, then 
    $$\{\sigma_\tau(B):[\tau]\in\Pi_1(\cU,u,u')\}$$
    is the minimal block system for $G_{\pi,u'}$.  In particular, if $x_I$ is an ordered subset of a block of $G_{\pi,u}$, then $\sigma_\tau(x_I)$ is an ordered subset of a block of $G_{\pi,u'}$.
\end{corollary}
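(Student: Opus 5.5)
The plan is to reduce everything to two facts already in hand: conjugation by a groupoid element transports block systems, and by \Cref{cor:uniformorimprimitive} and \Cref{cor:uniqueness} the minimal block system of a transitive group containing a transposition but not equal to $S_d$ is unique.

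First, I will fix $[\tau]\in\Pi_1(\cU,u,u')$. For a loop $\gamma$ at $u'$, the path $\tau\sqcdot\gamma\sqcdot\tau^{-1}$ is a loop at $u$, and
\[
\sigma_\gamma=\sigma_\tau\circ\sigma_{\tau\sqcdot\gamma\sqcdot\tau^{-1}}\circ\sigma_\tau^{-1}
\]
(with the paper's concatenation conventions). Hence $G_{\pi,u'}=\sigma_\tau G_{\pi,u}\sigma_\tau^{-1}$. It follows that the partition $\{\sigma_\tau(B'):B'\in\cB_u\}$ is preserved by $G_{\pi,u'}$, where $\cB_u$ is the minimal block system containing $B$. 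This partition is non-trivial, since $\sigma_\tau$ is a bijection, and it is minimal, since any refinement preserved by $G_{\pi,u'}$ would pull back under $\sigma_\tau^{-1}$ to a refinement of $\cB_u$ preserved by $G_{\pi,u}$. Conjugation also sends transpositions to transpositions, so $G_{\pi,u'}$ contains a transposition and is not the full symmetric group. By \Cref{cor:uniformorimprimitive} it is then a full wreath product over the orbits of $T(G_{\pi,u'})$, and by \Cref{cor:uniqueness} its minimal block system $\cB_{u'}$ is unique. Therefore $\{\sigma_\tau(B'):B'\in\cB_u\}=\cB_{u'}$, and in particular $\sigma_\tau(B)\in\cB_{u'}$.

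This gives one inclusion: $\{\sigma_\tau(B):[\tau]\in\Pi_1(\cU,u,u')\}\subseteq\cB_{u'}$. For the reverse inclusion, I will take any $B''\in\cB_{u'}$ and fix one $\tau_0$, so that $B''=\sigma_{\tau_0}(B')$ for some $B'\in\cB_u$. Since $G_{\pi,u}$ is transitive, it acts transitively on the blocks of $\cB_u$, so there is a loop $\gamma$ at $u$ with $\sigma_\gamma(B)=B'$. Then $\sigma_{\gamma\sqcdot\tau_0}(B)=\sigma_{\tau_0}(\sigma_\gamma(B))=B''$, so every block of $\cB_{u'}$ arises as some $\sigma_\tau(B)$.

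The "in particular" statement follows directly. The lifts of $\tau$ give $\sigma_\tau(x_{i_s})$ entrywise, as in the discussion preceding \Cref{lem:groupoid_on_fibre_power}. If all entries of $x_I$ lie in $B$, then all entries of $\sigma_\tau(x_I)$ lie in $\sigma_\tau(B)\in\cB_{u'}$, and distinctness of the entries is preserved because $\sigma_\tau$ is a bijection.

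The main obstacle is bookkeeping rather than substance: getting the composition order of $\sigma_{\tau\sqcdot\gamma\sqcdot\tau^{-1}}$ right under the paper's concatenation convention. Note that, unlike \Cref{thm:preserves_blocks}, no topology is needed here, because the hypothesis is placed on $G_\pi$ itself rather than on a restricted group.
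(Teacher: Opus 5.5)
Your proof is correct, but it follows a different route from the paper's. The paper argues by contradiction. If $\sigma_\tau(B)$ contained points $s,s'$ in different blocks of $\cB_{u'}$, then, since $G_{\pi,u'}$ is a full wreath product, some $\sigma_\gamma\in G_{\pi,u'}$ fixes $s'$ and sends $s$ outside $\sigma_\tau(B)$. Pulling $\gamma$ back along $\tau$ then gives a loop at $u$ whose permutation breaks the block $B$. In effect this re-runs the transposition argument of \Cref{cor:uniqueness} in transported form, following the template of \Cref{lem:blocks_to_blocks_in_leaves}.

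You instead use transport of structure. Since $\sigma_\gamma=\sigma_\tau\circ\sigma_{\tau\sqcdot\gamma\sqcdot\tau^{-1}}\circ\sigma_\tau^{-1}$, the bijection $\sigma_\tau$ conjugates $G_{\pi,u}$ onto $G_{\pi,u'}$, and your composition order is consistent with the paper's $\sqcdot$ convention. Conjugation carries minimal block systems to minimal block systems, so uniqueness from \Cref{cor:uniqueness} forces $\{\sigma_\tau(B'):B'\in\cB_u\}=\cB_{u'}$.

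What your route buys:
\begin{itemize}
\item It uses the wreath-product structure only through the uniqueness statement.
\item It makes clear that the corollary is purely group-theoretic once the conjugation formula is in hand.
\item It gives the stronger fact that each single $\sigma_\tau$ maps all of $\cB_u$ onto $\cB_{u'}$.
\item It is more complete. The paper's proof only shows that each $\sigma_\tau(B)$ lies in $\cB_{u'}$ and leaves implicit that every block of $\cB_{u'}$ arises this way. Your transitivity step, $\sigma_{\gamma\sqcdot\tau_0}(B)=\sigma_{\tau_0}(\sigma_\gamma(B))$, actually proves the stated set equality.
\end{itemize}
The paper's element-chasing version has the modest advantage of matching the argument it reuses in \Cref{lem:blocks_to_blocks_in_leaves}.
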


Before beginning the proof, we note that \Cref{lem:YoungBlocks} and \Cref{cor:uniformorimprimitive,cor:uniqueness} together with the conditions of the statement imply that $G_\pi$ is imprimitive and has a unique minimal block system, guaranteeing the existence of $B$ in the statement of the Corollary.

\begin{proof}
We observe that since $G_{\pi,u}\cong G_{\pi,u'}$ via conjugation, $G_{\pi,u'}$ also has a unique minimal block system.  Suppose, towards a contradiction, that $\sigma_\tau(B)$ is not a block of the minimal block system of $G_{\pi,u'}$ for some $[\tau]\in\Pi_1(\cU,u,u')$.  Then, there are $s,s'\in\sigma_\tau(B)$ such that $s$ and $s'$ belong to different blocks of $\cB_{u'}$.  By \Cref{cor:uniformorimprimitive}, $G_\pi$ is a full wreath product, so there is a monodromy element $\sigma_\gamma\in G_{\pi,u'}$ that stabilizes $s'$ within its block, but takes $s$ to an element $\hat{s}$ not belonging to $\sigma_\tau(B)$.  Then $[\tau^{-1}\circ\gamma\circ\tau]\in\Pi_1(\cU,u)$, but $\sigma_{\tau^{-1}\circ\gamma\circ\tau}(B)$ is not a block of $G_\pi$.
\end{proof}

\begin{lemma}\label{lem:blockcomponent}  
    Let $\pi:X\rightarrow Y$ be a branched cover of degree $d$, and assume that $G_{\pi} \lneq S_d$ contains a transposition.  Let $u\in\cU$ be generic and $\mathcal B_u=\{B_1,\ldots,B_b\}$ be the unique minimal block system of $G_{\pi,u}$.  Then  there is a unique component $X_{\mathcal B}^{(k)}$ of $X^{(k)}$, where $k=d/b$, which contains all orderings of all minimal blocks of $G_{\pi,u}$.  
\end{lemma}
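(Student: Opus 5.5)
We combine \Cref{lem:groupoid_on_fibre_power} with \Cref{cor:blocks}. Set $k=d/b$ and consider the finite set
\[
\mathcal O_u\vcentcolon=\{(x_{i_1},\dots,x_{i_k};u)\mid \{x_{i_1},\dots,x_{i_k}\}=B_j \text{ for some } j\}\subseteq X^{(k)},
\]
the set of all orderings of all minimal blocks over $u$. We must find a component $\hat X$ of $X^{(k)}$, not in the big diagonal, with $\mathcal O_u\subseteq \hat X$, and show it is unique. Since $u\in\cU$ is generic, we may take $u\in\cU^{(k)}$. Then each point of $\mathcal O_u$ has distinct entries, is a smooth point of $X^{(k)}$, and so lies in exactly one irreducible component. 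This already gives uniqueness once existence is shown, because any component containing a point of $\mathcal O_u$ is that point's unique component.

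For existence, fix one ordering $x_I$ of $B_1$ and let $\hat X$ be its component. We show every other $x_J\in\mathcal O_u$ lies in $\hat X$. The key observation is that $G_{\pi,u}\cong S_k\wr H$ by \Cref{cor:uniformorimprimitive}, so it contains $T(G_{\pi,u})=\prod_j S_{B_j}$ and acts transitively on blocks. Hence there is $g\in G_{\pi,u}$ with $g(B_1)=B_j$, and composing with an element of $S_{B_j}\le G_{\pi,u}$, we may arrange that $g$ sends the ordered tuple $x_I$ to the ordered tuple $x_J$. Write $g=\sigma_\gamma$ for a loop $\gamma$ in $\cU$ based at $u$. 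We may perturb $\gamma$ within its homotopy class so that it avoids the proper subvariety $\Delta^{(k)}-\Delta$, which has real codimension at least $2$, so $\gamma$ is a loop in $\cU^{(k)}$. As discussed before \Cref{lem:groupoid_on_fibre_power}, the lift of $\gamma$ to $X^{(k)}$ starting at $x_I$ is the tuple of coordinatewise lifts and ends at $\sigma_\gamma(x_I)=x_J$. This lift avoids $\operatorname{sing}(X^{(k)})$, so it stays in a single component, namely $\hat X$. Therefore $x_J\in\hat X$.

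To make this consistent with \Cref{cor:blocks}, we check that the same component works over other generic basepoints $u'$, which justifies the base-independent notation $X^{(k)}_{\mathcal B}$. By \Cref{cor:blocks}, $\sigma_\tau$ sends ordered minimal blocks at $u$ to ordered minimal blocks at $u'$. Lifting $\tau$ in $\cU^{(k)}$ then keeps us inside $\hat X$, so $\hat X$ contains all orderings of minimal blocks over every point of $\cU^{(k)}$. Finally, $\hat X$ is not contained in the big diagonal, since it contains tuples with distinct entries. It also satisfies $\pi^{(k)}(\hat X)\not\subseteq\Delta^{(k)}$, so \Cref{lem:groupoid_on_fibre_power} applies and shows it is a branched cover.

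The main obstacle is the step arranging that the monodromy element maps the ordered tuple $x_I$ exactly to $x_J$, rather than merely mapping the set $B_1$ to $B_j$. This is precisely where the full wreath product structure is needed: we need $S_{B_j}\le G_{\pi,u}$ to fix the order within the block. A second, routine point is making sure the loop can be chosen inside $\cU^{(k)}$ rather than $\cU$, which uses the codimension argument above.
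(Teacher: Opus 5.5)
Your proposal is correct and is essentially the paper's argument: the full wreath product structure $S_k\wr H$ provides monodromy elements carrying any ordering of one minimal block to any ordering of another, and the codimension-two argument lets you realize these by loops in $\cU^{(k)}$ (the paper phrases this as surjectivity of $\Pi_1(\cU^{(k)},u)\to\Pi_1(\cU,u)$), and lifts avoiding $\operatorname{sing}(X^{(k)})$ stay in one component. The differences are cosmetic: you combine the paper's two steps (reordering within a block via $\prod_j S_{B_j}$, then moving between blocks via transitivity) into a single element, and you spell out uniqueness via smoothness of the points over $u\in\cU^{(k)}$, which the paper leaves implicit.
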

\begin{proof}
     We observe that since $\cU$ and $\cU^{(k)}$ differ in real dimension 2, the inclusion $\iota:\cU^{(k)}\rightarrow\cU$ induces a surjection $\iota_\ast:\Pi_1(\cU^{(k)},u)\twoheadrightarrow\Pi_1(\cU,u)$.  In particular, every permutation induced from $\Pi_1(\cU,u)$ has a representative in $\Pi_1(\cU^{(k)},u)$.  By genericity, we may assume that $u\in\cU^{(k)}$.

     By \Cref{cor:uniformorimprimitive}, $G_\pi$ is a full wreath product, that is, $G_\pi\cong S_k\wr H$. Therefore, for any two orderings $x_I$ and $x_J$ of $B_i$, there exists an element of the monodromy group taking one ordering to the other, while fixing everything else. By the surjectivity of $\iota_\ast$, this permutation can be realized by $[\gamma]\in\Pi_1(\cU^{(k)},u)$. Thus, by \Cref{cor:blocks}, $\sigma_\gamma(x_I) = x_J$.

     Due to the transitivity of $G_\pi$, for any $x_i\in B_i$ and $x_j\in B_j$, there is some element of the monodromy group taking $x_i$ to $x_j$.  By the surjectivity of $\iota_\ast$, this permutation can be realized by $[\tau]\in\Pi_1(\cU^{(k)},u)$, that is, $\sigma_\tau(x_i)=x_j$.  By \Cref{cor:blocks}, this must take the entire block $B_i$ to $B_j$, that is, $\sigma_\tau(B_i)=B_j$.

     Finally, since the lifts of $\gamma$ and $\tau$ avoid the singular set of $X^{(k)}$, their endpoints are in the same irreducible component of $X^{(k)}$.  On the other hand, the constructions of $\gamma$ and $\tau$, respectively, correspond to the actions of the first and second factors of the semidirect product $S_k\wr H$.  Combining these two facts, we arrive at the conclusion of the lemma.  
     \end{proof}  

The monodromy action on a fibre of a branched cover $\pi:\pi^{-1}(\cU) \to Y$ is a local action in the sense that it is usually not the restriction of a \mydefit{deck transformation}, which is a map $\phi:\pi^{-1}(\cU) \to \pi^{-1}(\cU)$  satisfying $\pi(\phi(x)) = \pi(x)$ for all $x \in \pi^{-1}(\cU)$.  Indeed, non-trivial deck transformations cannot fix any points. However, when the assumptions of \Cref{lem:blockcomponent} are met, we can identify many deck transformations of $X_{\mathcal B}^{(k)}\cap \left(\pi^{(k)}\right)^{-1}(\cU^{(k)})$.

\begin{lemma}
    \label{lem:Sk-deck_transformations}
    Let $\pi:X\rightarrow Y$ be a branched cover of degree $d$, and assume that $G_{\pi} \lneq S_d$ contains a transposition, then the action of $S_k$ on $X_{\mathcal B}^{(k)}$ defined by 
    \[
    \sigma((x_{i_1},\ldots,x_{i_k};y)) = (x_{i_{\sigma(1)}},\ldots,x_{i_{\sigma(k)}};y)
    \]is a deck-transformation when restricted to $X_{\mathcal B}^{(k)}\cap \left(\pi^{(k)}\right)^{-1}(\cU^{(k)})$. 
\end{lemma}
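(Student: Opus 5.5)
We need to show three things about the coordinate permutation action of $S_k$ on $X_{\mathcal B}^{(k)}$: it is well defined (it preserves $X_{\mathcal B}^{(k)}$), it commutes with $\pi^{(k)}$, and over $\mathcal U^{(k)}$ each permutation is a homeomorphism of $X_{\mathcal B}^{(k)}\cap(\pi^{(k)})^{-1}(\mathcal U^{(k)})$ onto itself. The plan is to work first on all of $X^{(k)}$, where the permutation is visibly algebraic, and then use \Cref{lem:blockcomponent} to see that it preserves the distinguished component.

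First, for $\sigma\in S_k$, define $\phi_\sigma: X^{(k)}\to X^{(k)}$ by permuting the first $k$ coordinates. This is the restriction of a linear coordinate permutation of the ambient space $X^k\times Y$, so it is a regular automorphism of $X^{(k)}$ with inverse $\phi_{\sigma^{-1}}$. It fixes the last coordinate, so $\pi^{(k)}\circ\phi_\sigma=\pi^{(k)}$. Being an automorphism of $X^{(k)}$, $\phi_\sigma$ permutes the irreducible components of $X^{(k)}$ and preserves $\textrm{sing}(X^{(k)})$. It also preserves the big diagonal. Hence it maps $(\pi^{(k)})^{-1}(\mathcal U^{(k)})$ to itself.

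The key step, and the one I expect to be the main obstacle, is showing $\phi_\sigma(X_{\mathcal B}^{(k)})=X_{\mathcal B}^{(k)}$. Since $\phi_\sigma$ sends components to components, it suffices to find one point of $X_{\mathcal B}^{(k)}$, lying in no other component, whose image also lies in $X_{\mathcal B}^{(k)}$. Take $u\in\mathcal U^{(k)}$ generic and let $x_I$ be an ordering of a minimal block $B_i$. Then $\phi_\sigma(x_I)$ is another ordering of $B_i$, and by \Cref{lem:blockcomponent} both lie in $X_{\mathcal B}^{(k)}$. Since $u\in\mathcal U^{(k)}$, the point $x_I$ is a smooth point of $X^{(k)}$, so it lies in a unique component. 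That component is therefore $X_{\mathcal B}^{(k)}$, and its image under $\phi_\sigma$ is the unique component through $\phi_\sigma(x_I)$, which is again $X_{\mathcal B}^{(k)}$. The delicate point is the genericity bookkeeping: one must ensure that $u$ can be chosen in $\mathcal U^{(k)}$ while the minimal block system is the one from \Cref{lem:blockcomponent}. This is handled by the surjectivity of $\Pi_1(\mathcal U^{(k)},u)\to\Pi_1(\mathcal U,u)$ already used in that lemma.

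Combining these, $\phi_\sigma$ restricts to a homeomorphism of $X_{\mathcal B}^{(k)}\cap(\pi^{(k)})^{-1}(\mathcal U^{(k)})$ onto itself that commutes with $\pi^{(k)}$, so it is a deck transformation. Moreover, $\sigma\mapsto\phi_\sigma$ is a homomorphism: up to the standard convention of composing on the appropriate side, a right action via $\phi_\sigma\circ\phi_\rho=\phi_{\rho\sigma}$. Finally, since the coordinates of points off the big diagonal are distinct, $\phi_\sigma$ has no fixed points for $\sigma\neq e$. This is consistent with the remark that non-trivial deck transformations are fixed-point free.
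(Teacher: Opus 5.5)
Your proof is correct, but it takes a different route from the paper's. Both arguments rest on \Cref{lem:blockcomponent}; they use it differently.

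The paper argues pointwise. It takes an arbitrary point $x_I$ of $X_{\mathcal B}^{(k)}$ over $\mathcal U^{(k)}$, treats it as an ordering of a minimal block at its base point, and cites \Cref{lem:blockcomponent} to conclude that the reordering $\sigma(x_I)$ again lies in $X_{\mathcal B}^{(k)}$. Commuting with $\pi^{(k)}$ is noted as obvious.

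You instead treat $\phi_\sigma$ as a regular automorphism of all of $X^{(k)}$, so it permutes irreducible components. You then need \Cref{lem:blockcomponent} at only one generic fibre: a point over $\mathcal U^{(k)}$ is smooth, hence lies on a unique component, and that single witness forces $\phi_\sigma(X_{\mathcal B}^{(k)})=X_{\mathcal B}^{(k)}$.

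What your route buys:
\begin{itemize}
\item It avoids steps the paper leaves implicit. The paper needs every point of $X_{\mathcal B}^{(k)}$ over $\mathcal U^{(k)}$ to be an ordering of a minimal block at its own, possibly non-generic, base point. It also needs the lemma's component to be the same there. Both facts require \Cref{lem:groupoid_on_fibre_power} together with \Cref{cor:blocks}.
\item It gives invariance of the whole closed component $X_{\mathcal B}^{(k)}$, not just its part over $\mathcal U^{(k)}$. That is what is actually used when $X_{\mathcal B}^{\{k\}}$ is formed as the quotient of the irreducible variety $X_{\mathcal B}^{(k)}$ by a finite group.
\end{itemize}

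The paper's route is shorter. It also keeps the interpretation of points as orderings of blocks in view, which the paper uses immediately afterwards to identify points of $X_{\mathcal B}^{\{k\}}$ with minimal blocks.

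Two small remarks. Preservation of $(\pi^{(k)})^{-1}(\mathcal U^{(k)})$ follows directly from $\pi^{(k)}\circ\phi_\sigma=\pi^{(k)}$, so your ``hence'' after the singular locus and big diagonal is misplaced, though harmless. Your extra observations about the right action and fixed-point freeness are correct but not needed for the statement.
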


\begin{proof}
    If $\sigma \in S_k$ then $\pi^{(k)}(\sigma(x_I)) = \pi^{(k)}(x_I)$, since $\sigma$ only affects the ordering of the elements.  Let $u\in X_{\mathcal B}^{(k)}\cap \left(\pi^{(k)}\right)^{-1}(\cU^{(k)})$.  By \Cref{lem:blockcomponent}, $X_{\mathcal B}^{(k)}$ contains all orderings of all minimal blocks of $G_{\pi,u}$, so $\sigma(x_I) \in X_{\mathcal B}^{(k)}$.  Hence, $\sigma:X_\cB^{(k)}\rightarrow X_{\cB}^{(k)}$ and commutes with $\pi^{(k)}$.
\end{proof}

\begin{definition}
    Suppose $\pi:X \to Y$ is a branched cover of degree $d$ and $G_{\pi} \lneq S_d$ contains a transposition. We define $\mydef{X_{\mathcal B}^{\{k\}}}$ to be the quotient of $X_{\mathcal B}^{(k)}$ by the action of $S_k$ identified in \Cref{lem:Sk-deck_transformations}. 
\end{definition}

The quotient $X_{\mathcal B}^{\{k\}}$ is a variety since it is the orbit space of a finite group action on an irreducible variety \cite[Chapter 10]{Har92}.  Its points correspond to (unordered) $k$-sets instead of ordered $k$-tuples. In fact, by \Cref{cor:blocks} and \Cref{lem:blockcomponent}, its points are exactly the minimal blocks of $\pi: X\to Y$. 

We define two maps related to $X_{\mathcal B}^{\{k\}}$.  For the first map, we define $\mydef{\pi^{\{k\}}}:X_{\mathcal{B}}^{\{k\}}\rightarrow Y$ as follows.  For any ordered tuple $x_I$ of size $k$ in a fibre of $\pi$, we define $\mydef{x_I^{\{k\}}}\in X_{\mathcal B}^{\{k\}}$ to be the point which represents the underlying set of $x_I$.  In addition, we define $\pi^{\{k\}}(x_I^{\{k\}})$ as $\pi(x)$ for any $x\in x_I$.  Since $S_k$ acts on fibres of $\pi^{(k)}$, this map is well-defined.  Then $\pi^{\{k\}}$ is a degree-$b$ map since the fibre over $u \in \mathcal U^{(k)}$ consists of the $b$ blocks of $G_{\pi,u}$.  The second map is $\varphi_1:\pi^{-1}(\cU^{(k)}) \to X_{\mathcal B}^{\{k\}}\cap \left(\pi^{\{k\}}\right)^{-1}(\cU^{(k)})$ which takes $x \in \pi^{-1}(u)$ to the block of $G_{\pi,u}$ containing $x$. The preimage of a block $B$ under $\varphi_1$ is the set of elements  of $B$ and so $\varphi_1$ is a degree-$k$ map. 

\begin{theorem}\label{thm:explicitfactorization}
    Suppose $G_{\pi}\lneq S_d$ contains a transposition. Then $\pi$ decomposes as 
    \begin{center}
        \begin{tikzcd}
  X \arrow[r,longdashed,"\varphi_1"] \arrow[rr,longdashed,bend right=25,"\pi" below] & X^{\{k\}}_{\cB} \arrow[r,longdashed,"\pi^{\{k\}}"] & Y.
\end{tikzcd}
\end{center}
\end{theorem}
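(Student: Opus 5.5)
We must show that $\varphi_1$ and $\pi^{\{k\}}$ are branched covers, that their composition is $\pi$ on a Zariski open set, and that both degrees exceed one. The plan is to take everything from the fibre power $X_{\mathcal B}^{(k)}$ and its quotient $X_{\mathcal B}^{\{k\}}$ constructed above. By \Cref{cor:uniformorimprimitive}, $G_\pi\cong S_k\wr H$ with $1<k<d$, so $k>1$ and $b=d/k>1$. This gives the degree conditions once the maps are known to have degrees $k$ and $b$.

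First, I show that $X_{\mathcal B}^{\{k\}}$ is irreducible and that $\pi^{\{k\}}$ is dominant of degree $b$. The component $X_{\mathcal B}^{(k)}$ is irreducible by construction. It is not contained in the big diagonal, because its points over $\cU^{(k)}$ are orderings of blocks, which have distinct entries. \Cref{lem:groupoid_on_fibre_power} then makes $\pi^{(k)}|_{X_{\mathcal B}^{(k)}}$ a branched cover. The quotient by the finite group $S_k$ is irreducible, and the map to the quotient is surjective, so dominance descends. The degree count $b$ comes from \Cref{cor:blocks} and \Cref{lem:blockcomponent}: the fibre of $X_{\mathcal B}^{\{k\}}$ over $u\in\cU^{(k)}$ is exactly the $b$ minimal blocks.

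Second, I show that $\varphi_1$ is a morphism on a Zariski open set. This is the step I expect to be the main obstacle: $\varphi_1$ is defined pointwise, so algebraicity needs an argument. I would realize its graph as the image of the projection
\[
X_{\mathcal B}^{(k)}\to X\times X_{\mathcal B}^{\{k\}},\qquad (x_{i_1},\dots,x_{i_k};y)\mapsto (x_{i_1},x_{\{I\}}^{\{k\}}),
\]
which is a morphism. Its image is constructible and irreducible, and over $\cU^{(k)}$ it is a bijective graph over $\pi^{-1}(\cU^{(k)})$, since each point lies in exactly one block. A constructible set that is the graph of a map from a smooth open subset of $X$ to the quasi-projective target, and that is finite and birational onto $X$, defines a rational map. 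I would invoke Zariski's main theorem after shrinking to the normal locus to conclude that $\varphi_1$ is a morphism on a Zariski open set. Dominance of $\varphi_1$ holds because every block is hit, and its degree is $k$ because $\varphi_1^{-1}(B)=B$.

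Finally, the identity $\pi^{\{k\}}\circ\varphi_1=\pi$ holds by definition on $\pi^{-1}(\cU^{(k)})$. Hence it holds as rational maps, and the factorization is a decomposition in the sense of \eqref{eq:decomposable}. As a consistency check, \Cref{lem:decomposable_preserves_blocks} recovers $\mathcal B_u$ as the block system induced by this factorization, in agreement with \Cref{lem:imprimitive_iff_decomposable}.
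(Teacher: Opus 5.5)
Your proposal is correct and follows the paper's route. Both arguments get $1<k<d$ (hence $b=d/k>1$) from \Cref{cor:uniformorimprimitive}, then read off $\pi=\pi^{\{k\}}\circ\varphi_1$ with degrees $k$ and $b$ from the appendix constructions of $X_{\mathcal B}^{(k)}$ and its $S_k$-quotient. The paper's proof takes the algebraicity of $\varphi_1$ and the branched-cover property of $\pi^{\{k\}}$ as evident from those explicit descriptions, so your graph-closure argument fills in a step the paper leaves implicit. Zariski's main theorem is more than you need there: in characteristic zero, generic injectivity of the projection from the graph closure to $X$ already gives birationality.
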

\begin{proof}
    Since $\pi:X \to Y$ is a branched cover, $G_{\pi}$ is transitive. By \Cref{cor:uniformorimprimitive}, since $G_{\pi}$ contains a transposition, it has the form $S_k \wr H$, that is, an imprimitive group with the $b=d/k$ orbits of $T(G_{\pi})$ as a unique minimal block system. Since $G_{\pi} \neq S_d$ we have $1 <k <d$. 

    From the explicit descriptions of $\varphi_1$ and $\pi^{\{k\}}$, we see that $\pi = \pi^{\{k\}} \circ \varphi_1$.  In addition, the degrees of $\varphi_1$ and $\pi^{\{k\}}$ are $k$ and $b$, respectively.  Since $1<k<d$, this is a decomposition of $\pi$. 
\end{proof}

We end with the statements that complete the final details of \Cref{prop:generalizedtracetest}.  In particular, we show that being trace-invariant is an algebraic condition.

\begin{lemma}\label{lem:coincidencealgebraicsubset}
Let $y^\ast\in\cU^{(d)}$ be generic and $\emptyset\not=P\varsubsetneq X_{y^\ast}$.  Then
$$
TI_P\vcentcolon=\{y:\exists [\tau]\in\Pi_1(\cU,y^\ast,y)\text{ s.t. }\sigma_\tau(P)\text{ is trace-invariant}\}
$$
is a Zariski closed subset of $\cU$.
\end{lemma}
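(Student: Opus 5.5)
The plan is to realize $TI_P$ as the image of a Zariski closed subset of a fibre power, so that it is algebraic by construction. Let $r=|P|$, write $P=(x_{i_1},\dots,x_{i_r})$ in some fixed order, and let $\hat X$ be the unique irreducible component of $X^{(r)}$ containing $(x_{i_1},\dots,x_{i_r};y^\ast)$. Since $y^\ast\in\cU^{(d)}\subseteq\cU^{(r)}$ and $P$ has distinct points, $\hat X$ is not contained in the big diagonal, and it is the only component through that point. By \Cref{lem:groupoid_on_fibre_power}, the points of $\hat X$ over a given $y\in\cU^{(r)}$ are exactly the ordered tuples $\sigma_\tau(P)$ for $[\tau]\in\Pi_1(\cU^{(r)},y^\ast,y)$. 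Because the inclusion $\cU^{(r)}\hookrightarrow\cU$ induces surjections on fundamental groupoids (the complement has real codimension $2$), these are the same tuples one obtains from paths in $\cU$. Thus $TI_P\cap\cU^{(r)}=\pi^{(r)}(\hat X\cap Z)$, where $Z$ is the locus of tuples whose underlying set is trace-invariant at its base point.

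Next I would show that $Z$ is Zariski closed. I would phrase trace-invariance of $Q=\sigma_\tau(P)$ over $y$ as follows: for every $g$ in the monodromy group at $y$, $\Sigma(g(Q))=\Sigma(Q)$. To make this algebraic, pass to the full fibre power component $\tilde X\subseteq X^{(d)}$ containing an ordering $(x_1,\dots,x_d;y^\ast)$ of the entire fibre. Its points over $y\in\cU^{(d)}$ are the orderings $\sigma_\tau(x_1,\dots,x_d)$, and the monodromy group at $y$ acts on these coordinates exactly by the fixed subgroup $G_{\pi,y^\ast}\le S_d$ via the identification $\sigma_\tau$. Since $\sigma_\tau(P)$ occupies the fixed index set $I=\{i_1,\dots,i_r\}$ in this ordering, trace-invariance of the translate becomes the finite family of polynomial equations
\[
\sum_{i\in I}x_{g(i)}=\sum_{i\in I}x_i\qquad (g\in G_{\pi,y^\ast})
\]
in the coordinates of $\tilde X\subseteq\mathbb{C}^{nd}\times Y$. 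This gives a Zariski closed $\tilde Z\subseteq\tilde X$. It is independent of $\tau$, because changing $\tau$ conjugates the group and permutes the admissible index sets compatibly. Taking all $G_{\pi,y^\ast}$-translates of $I$ (finitely many) handles the existential quantifier over $\tau$.

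Finally, $TI_P\cap\cU^{(d)}=\pi^{(d)}(\tilde Z)\cap\cU^{(d)}$. Because $\pi^{(d)}|_{\tilde X}$ is finite over $\cU^{(d)}$ (a covering map with finite fibres), it is proper there, and the image of a closed set is closed in $\cU^{(d)}$. I expect the main obstacle to be passing from $\cU^{(d)}$ to $\cU$. Over $\cU$, the lifts of paths are still defined, and the defining equations extend continuously, so I would argue that $TI_P$ is closed under limits in $\cU$. Its intersection with the dense open set $\cU^{(d)}$ is constructible and closed, and I would take the Zariski closure in $\cU$ and check that no extra points are added, using continuity of lifts in $\cU$. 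A secondary subtlety is verifying that $\tilde Z$ is genuinely independent of the choice of ordering, which follows from the conjugation identity $\Phi_\tau$ of \Cref{sec:MonodromyBranched}.
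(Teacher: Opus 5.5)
Your core construction is the paper's: the component $\tilde X$ (the paper's $X_P$) of $X^{(d)}$ through an ordering of the whole fibre, and the finitely many linear equations $\sum_{i\in I}x_{g(i)}=\sum_{i\in I}x_i$ for $g\in G_{\pi,y^\ast}$. You also use the same conjugation argument: the monodromy at any $y'$ acts on the coordinates of a point of $\tilde X$ by the same index permutations. Finally, you project by $\pi^{(d)}$. Taking the union over $G_{\pi,y^\ast}$-translates of $I$ is harmless but unnecessary. Projecting already absorbs the existential quantifier over $\tau$, because the fibre of $\tilde X$ over $y$ consists of all orderings $\sigma_\tau(x_1,\dots,x_d)$.

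The step that fails as written is the passage from $\cU^{(d)}$ to $\cU$. You know that $TI_P$ is Euclidean-closed in $\cU$ and that $TI_P\cap\cU^{(d)}$ is Zariski closed in $\cU^{(d)}$. This only gives $\overline{TI_P\cap\cU^{(d)}}\subseteq TI_P$. Nothing in your argument rules out points of $TI_P$ in the proper subvariety $\cU\setminus\cU^{(d)}$ that are not limits of points of $TI_P\cap\cU^{(d)}$. A Euclidean-closed set whose trace on a dense Zariski open set is algebraic need not be algebraic, so you have not shown $TI_P$ is even constructible.

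The fix, which is what the paper does, is to extend the characterization rather than the image. $\tilde X\cap(\pi^{(d)})^{-1}(\cU)$ is the Euclidean closure of its part over $\cU^{(d)}$. Hence it is exactly the union of sheets $\{\sigma_\tau(x_1,\dots,x_d):[\tau]\in\Pi_1(\cU,y^\ast,\cdot)\}$ of the covering $X^{(d)}\to\cU$. Your conjugation argument is purely topological and holds at every $y'\in\cU$. So the equations cut out exactly the trace-invariant translates over all of $\cU$. Then $TI_P=\pi^{(d)}\bigl(\tilde Z\cap(\pi^{(d)})^{-1}(\cU)\bigr)$ is the image of a closed set under a map that is finite over $\cU$. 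It is therefore constructible and Euclidean-closed, hence Zariski closed, with no closure-and-compare step needed.
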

\begin{proof}
Since $y^\ast\in\cU^{(d)}$, $X_{y^\ast}$ consists of $d$ distinct points. Suppose that the points have been labeled so that $X_{y^\ast}=\{x^\ast_1,\dots,x^\ast_d\}$ and $P=\{x^\ast_1,\dots,x^\ast_k\}$. Let $X_P$ be the component of $X^{(d)}$ containing $(x^\ast_1,\dots,x^\ast_d;y^\ast)$.  By \Cref{lem:groupoid_on_fibre_power}, $\pi_P:X_P\rightarrow Y$ is a branched cover.  For each $[\gamma]\in\Pi_1(\cU^{(d)},y^\ast)$, by a slight abuse of notation, we write $(x^\ast_{\sigma_\gamma(1)},\dots,x^\ast_{\sigma_\gamma(d)};y^\ast)\vcentcolon=(\sigma_\gamma(x_1^\ast),\dots\sigma_\gamma(x_d^\ast);y^\ast)$ to stress the permutation on the indices induced by $\gamma$.

We now consider the system of equations given by
\begin{equation}\label{eq:traceinvariantvariety}
\sum_{i=1}^kx_i=\sum_{i=1}^kx_{\sigma_\gamma(i)}\text{ for }[\gamma]\in\Pi_1(\cU^{(d)},y^\ast).
\end{equation}
We note that it is enough to generate these equations using a single representative $\gamma$ for each element of the monodromy group $G_{\pi,y^\ast}$.  Hence, this system of equations consists of finitely many equations, and they cut out a closed subvariety $C$ of $X_P$.  

Suppose that $(x'_1,\dots,x'_d;y')\in X_P$, then there is some $[\tau]\in\Pi_1(\cU^{(d)},y^\ast,y')$ such that $\sigma_\tau(x^\ast_i)=x'_i$.  Let $[\gamma']\in\Pi_1(\cU^{(d)},y')$, then there is some $[\gamma]\in\Pi_1(\cU^{(d)},y^\ast)$ so that $[\tau^{-1}\sqcdot\gamma\sqcdot\tau]=[\gamma']$.  Since homotopy equivalent paths induce the same bijections between their endpoints, we have that
$$
x'_{\sigma_{\gamma'}(i)}=\sigma_{\gamma'}(x'_i)=\sigma_\tau(\sigma_\gamma(\sigma_{\tau^{-1}}(x'_i)))=
\sigma_\tau(\sigma_\gamma(x^\ast_i))=\sigma_\tau(x^\ast_{\sigma_\gamma(i)})=x'_{\sigma_\gamma(i)}.
$$
In other words, the permutation induced by $\gamma'$ acts on the indices of $x'_i$ in the same way that $\gamma$ acts on the indices of $x^\ast_i$.

We next show that $(x'_1,\dots,x'_d;y')\in X_P$ satisfies \Cref{eq:traceinvariantvariety} if and only if $\{x'_1,\dots,x'_k\}$ is trace-invariant.  Let $[\gamma']\in\Pi_1(\cU^{(d)},y')$ and define $[\gamma]\in\Pi_1(\cU^{(d)},y^\ast)$ as above.  If the equations are satisfied, then 
$$
\sum_{i=1}^kx'_i=\sum_{i=1}^kx'_{\sigma_\gamma(i)}=\sum_{i=1}^k\sigma_{\gamma'}(x'_i),
$$
where the first equality represents satisfying the equations and the second equality uses the relationship between $\gamma$ and $\gamma'$.  Since this can be repeated for any $\gamma'$, this shows that $\{x'_1,\dots,x'_k\}$ is trace-invariant.  On the other hand, if $\{x'_1,\dots,x'_k\}$ is trace-invariant, then a similar equality holds,
$$
\sum_{i=1}^kx'_i=\sum_{i=1}^k\sigma_{\gamma'}(x'_i)=\sum_{i=1}^kx'_{\sigma_\gamma(i)},
$$
where the first equality represents the trace invariance and the second equality uses the relationship between $\gamma$ and $\gamma'$.  By direct observation, we see that the outer equality is one of those appearing in \Cref{eq:traceinvariantvariety}.  Moreover, since conjugation by $\tau$ gives a bijection between $\Pi_1(\cU^{(d)},y^\ast)$ and $\Pi_1(\cU^{(d)},y')$, we conclude that the entire system in \Cref{eq:traceinvariantvariety} is satisfied.

By taking closures, we see that the properties above can be extended from $C\cap\pi_P^{-1}(\cU^{(d)})$ to $C\cap\pi_P^{-1}(\cU)$.  Since $\pi_P:X_P\cap\pi_P^{-1}(\cU)\rightarrow\cU$ is a finite and dominant quotient, the image of $C\cap\pi_P^{-1}(\cU)$ is a closed subvariety of $\cU$.  Since the image of $C\cap\pi_P^{-1}(\cU)$ consists of those points $y\in \cU^{(d)}$ where \Cref{eq:traceinvariantvariety} holds for some point in the fibre over $C$, this image is the variety of interest.
\end{proof}

\begin{corollary}\label{cor:entirefibres}
Under the hypotheses and notation of \Cref{lem:coincidencealgebraicsubset} and its proof, $C\cap\pi_P^{-1}(\cU)=\cU$ if and only if
$$\{y:\exists [\tau]\in\Pi_1(\cU,y^\ast,y)\text{ s.t. }\sigma_\tau(P)\text{ is trace-invariant}\}=\cU.$$
\end{corollary}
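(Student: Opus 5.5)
The plan is to read the equality $C\cap\pi_P^{-1}(\cU)=\cU$ as an equality of images, namely $\pi_P\bigl(C\cap\pi_P^{-1}(\cU)\bigr)=\cU$. This is the only reading under which both sides live in $\cU$. With that reading, the corollary should follow directly from the identification made at the end of the proof of \Cref{lem:coincidencealgebraicsubset}.

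The key step is to extract from that proof the set-theoretic identity
\[
\pi_P\bigl(C\cap\pi_P^{-1}(\cU)\bigr)=TI_P .
\]
The proof gives this in three pieces.
\begin{enumerate}
\item A point $(x'_1,\dots,x'_d;y')\in X_P$ with $y'\in\cU^{(d)}$ lies in $C$ exactly when $\{x'_1,\dots,x'_k\}$ is trace-invariant. This is the two-sided computation using the conjugation bijection $[\gamma]\mapsto[\tau^{-1}\sqcdot\gamma\sqcdot\tau]$.
\item By \Cref{lem:groupoid_on_fibre_power}, every point of $X_P$ over $y'$ has the form $(\sigma_\tau(x^\ast_1),\dots,\sigma_\tau(x^\ast_d);y')$ for some $[\tau]\in\Pi_1(\cU^{(d)},y^\ast,y')$. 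Hence $\{x'_1,\dots,x'_k\}=\sigma_\tau(P)$.
\item Combining these, $y'\in\pi_P(C)$ if and only if some translate $\sigma_\tau(P)$ is trace-invariant.
\end{enumerate}
Paths in $\cU^{(d)}$ are in particular paths in $\cU$. Conversely, every class in $\Pi_1(\cU,y^\ast,y')$ has a representative in $\cU^{(d)}$, because $\cU-\cU^{(d)}$ has real codimension at least $2$. So the two groupoids give the same translates over $\cU^{(d)}$. The closure argument at the end of the lemma then extends the identity from $\cU^{(d)}$ to all of $\cU$.

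Once the identity is in hand, both directions are immediate. If the image is all of $\cU$, then $TI_P=\cU$; conversely, if $TI_P=\cU$, then the image is $\cU$. I would also record the equivalent formulation $C\supseteq X_P\cap\pi_P^{-1}(\cU)$, which is presumably what is used afterward. Here is why the two agree. $C$ is closed in the irreducible variety $X_P$, and $\pi_P$ is finite. If $\pi_P(C)=\cU$, then $C$ has dimension $\dim X_P$, so $C$ contains $X_P\cap\pi_P^{-1}(\cU)$. The reverse implication is trivial.

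The main obstacle is the passage from $\cU^{(d)}$ to $\cU$, which the lemma handles only by \lq\lq taking closures.\rq\rq{} I would rely on $\pi_P$ being finite, hence closed, over $\cU$, so that the image of the closed set $C\cap\pi_P^{-1}(\cU)$ is the Zariski closure of its part over $\cU^{(d)}$. I would then argue that $TI_P$ is likewise closed, which is exactly the content of \Cref{lem:coincidencealgebraicsubset}.
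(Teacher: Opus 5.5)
Your argument is correct, but its key step differs from the paper's. The paper's proof ends by concluding that $C\cap\pi_P^{-1}(\cU)$ ``includes every fibre.'' So the intended reading is $C\cap\pi_P^{-1}(\cU)=X_P\cap\pi_P^{-1}(\cU)$. Under your image reading the corollary merely restates the identity $\pi_P(C\cap\pi_P^{-1}(\cU))=TI_P$, and the real content sits in your ``equivalent formulation.''

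For that formulation, the paper proves that $C\cap\pi_P^{-1}(\cU)$ is always a union of whole fibres of $\pi_P$. Two translates $\sigma_{\tau_1}(P)$ and $\sigma_{\tau_2}(P)$ over the same $y'$ differ by the monodromy of the loop $\tau_1^{-1}\sqcdot\tau_2$ at $y'$, and trace-invariance is constant on $G_{\pi,y'}$-orbits. Hence once every fibre meets $C$, every fibre lies in $C$.

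You reach the same conclusion by dimension and irreducibility instead. A closed $C\subseteq X_P$ whose image is dense in $Y$ has dimension $\dim Y=\dim X_P$, so $C=X_P$. Your route is shorter and yields the stronger statement $C=X_P$. It also needs the identity with $TI_P$ only over $\cU^{(d)}$, where the lemma's proof establishes it directly. The paper's route gives fibre-saturation of $C$ unconditionally, even when $C$ does not surject. That is the same path-independence computation used in \Cref{prop:generalizedtracetest}.

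The patch in your last paragraph does not work as written, though you do not need it. There are three problems:
\begin{itemize}
\item The image of $C\cap\pi_P^{-1}(\cU)$ need not be the closure of its part over $\cU^{(d)}$. $C$ can have components lying entirely over $\cU-\cU^{(d)}$, e.g.\ when $C\cap\pi_P^{-1}(\cU^{(d)})=\emptyset$.
\item Two closed subsets of $\cU$ that agree on $\cU^{(d)}$ need not coincide.
\item Invoking closedness of $TI_P$ is circular, since \Cref{lem:coincidencealgebraicsubset} derives it from this very identity.
\end{itemize}
Instead, argue each direction separately. For ``$\Leftarrow$'', your dimension argument only uses $TI_P\supseteq\cU^{(d)}$. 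For ``$\Rightarrow$'', points of $\cU^{(d)}$ lie in $TI_P$ by the identity there. For $y\in\cU-\cU^{(d)}$, transport a translate $Q$ of $P$ over $y$ to nearby points of $\cU^{(d)}$ inside an evenly covered neighborhood, where the transported set is trace-invariant. The monodromy groups are identified along that neighborhood and the sheets vary continuously. So the equalities $\Sigma(g(Q))=\Sigma(Q)$, for $g\in G_{\pi,y}$, persist in the limit.
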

\begin{proof}
If $C\cap\pi_P^{-1}(\cU)=\cU$, then $C\cap\pi^{-1}_P(\cU)$ contains at least one point in every fibre of $\pi_P:X_P\cap \pi_P^{-1}(\cU)\rightarrow\cU$.  Hence, the image under this map is $\cU$.  On the other hand, suppose that $[\tau_1],[\tau_2]\in\Pi_1(\cU,y^\ast,y')$ and $\sigma_{\tau_1}(P)$ is trace-invariant. Let $[\gamma']\in\Pi_1(\cU,y')$.  Since $\tau_2\sqcdot\gamma'$ is a path from $y^\ast$ to $y'$, we conclude from the proof of \Cref{prop:generalizedtracetest} that $\Sigma(\sigma_{\gamma'}(\sigma_{\tau_2}(P)))=\Sigma(\sigma_{\tau_1}(P))$.  Since this is independent of $\gamma'$, we conclude that $\sigma_{\tau_2}(P)$ is trace-invariant.

This implies that $C\cap\pi_P^{-1}(\cU)$ consists of entire fibres of $\pi_P$.  Hence, if the projection is surjective, then $C\cap\pi_P^{-1}(\cU)$ must have a point in every fibre of $\pi_P$.  Since the intersection of $C\cap\pi_P^{-1}(\cU)$ with a fibre is either empty or the entire fibre and the intersections are not empty, we conclude that $C\cap\pi_P^{-1}(\cU)$ includes every fibre, and the equality holds.
\end{proof}

\section*{AI Statement} In this paper, AI was used for drafting, editing, proof exposition, literature navigation, and mathematical brainstorming. Most of the AI use was in writing, exposition, and development of figures, with a smaller but meaningful role in mathematical reasoning and reference checking.  The statement and proof of \Cref{lem:sufficient_for_no_TIS} were initially suggested by AI.  The research questions, central results, and final mathematical judgments were developed and verified by the authors.

\bibliographystyle{amsplain}
\bibliography{references}

\end{document}

\typeout{get arXiv to do 4 passes: Label(s) may have changed. Rerun}